\newcommand{\C}[1]{\mathcal{#1}}
\newcommand{\B}[1]{{\bf #1}}
\newcommand{\I}[1]{{\mathbbm #1}}
\newcommand{\M}[1]{\mathrm{#1}}
\newcommand{\e}{\varepsilon}
\renewcommand{\mid}{:}
\renewcommand{\dots}{\hspace{0.9pt}.\hspace{0.3pt}.\hspace{0.3pt}.\hspace{1.5pt}}
\renewcommand{\ge}{\geqslant}
\renewcommand{\le}{\leqslant}
\newcommand{\complicated}{rich}
\newif\ifnotesw\noteswtrue
\newcommand{\comment}[1]{\ifnotesw $\blacktriangleright$\ {\sf #1}\   $\blacktriangleleft$ \fi}
\newcommand{\hide}[1]{}
\newcommand{\beq}[1]{\begin{equation}\label{#1}}
\newcommand{\eeq}{\end{equation}}
\newtheorem{theorem}{Theorem}[section]
\newtheorem{lemma}[theorem]{Lemma}
\newtheorem{conjecture}[theorem]{Conjecture}
\theoremstyle{definition}
\newtheorem{rem}[theorem]{Remark}
\newcommand{\case}[1]{\medskip\noindent{\bf Case #1} }
\newtheorem{cla}{Claim}[theorem]
\newcommand{\claim}[2]{\begin{cla}\label{cl:#1}#2\end{cla}}
\newcommand{\bcpf}{\begin{proof}[Proof of Claim.]}
\newcommand{\ecpf}{\end{proof}}
\newcommand{\secref}[1]{Section~\ref{#1}}
\newcommand{\V}[1]{\mathbold{#1}}
\newcommand{\eps}{\varepsilon} 
\newcommand{\MD}{M} 
\renewcommand{\mid}{:}
\newcommand{\dist}{\M{dist}}
\newcommand{\simTr}{\overset{{\mbox{\tiny Tr}}}{\sim}}
\newcommand{\CA}{\mathsf{A}}
\newcommand{\CB}{\mathsf{B}}
\newcommand{\CC}{{C}}
\newcommand{\CF}{{F}}
\newcommand{\CO}{{\C O}}
\newcommand{\VV}[2]{{\mathcal V}_{#2}}
\newcommand{\CN}{{\C N}}
\newcommand{\CP}{{\C P}}
\renewcommand{\Delta}{\mathrm{dim_{\raisebox{-3pt}{$\Box$}}}}
\title[Measurable circle squaring]{Measurable circle squaring}
\author[{\L}.~Grabowski]{{\L}ukasz Grabowski}
\address{Department of Mathematics and Statistics,
Lancaster University, Lancaster, United Kingdom}
\email{graboluk@gmail.com}
\author[A.~M\'ath\'e]{Andr\'as M\'ath\'e}
\address{Mathematics Institute, University of Warwick, Coventry, United Kingdom}
\email{A.Mathe@warwick.ac.uk}
\author[O.~Pikhurko]{Oleg Pikhurko} 
\address{Mathematics Institute and DIMAP, University of Warwick, Coventry, United Kingdom}
\email{O.Pikhurko@warwick.ac.uk}
\thanks{\L.G.\ was partially supported by EPSRC grant~EP/K012045/1 and by Fondations Sciences Math{\'e}matiques de Paris during the programme \textit{Marches Al\'eatoires et G\'eom\'etrie Asymptotique des Groupes} at Institut Henri-Poincar\'e.
A.M.\ was partially supported by a Levehulme Trust 
Early Career Fellowship and by the Hungarian National Research, Development and 
Innovation Office -- NKFIH, 104178.
O.P.\ was partially supported by ERC
grant~306493 and EPSRC grant~EP/K012045/1.
}
\begin{document}

\begin{abstract} Laczkovich proved that
if bounded subsets $A$ and $B$ of $\I R^k$ have the same non-zero 
Lebesgue measure and the upper box dimension of the boundary of each set is less than 
$k$, then there is a partition
of $A$ into finitely many parts that can be translated to form 
a partition of $B$.
Here we show that it can be additionally required that each part is both 
Baire and Lebesgue measurable. 
As special cases, this gives measurable and translation-only 
versions of Tarski's circle squaring and Hilbert's third problem.
\end{abstract}

\maketitle

\section{Introduction}\label{Intro}

We call two sets $A,B\subseteq \I R^k$ \emph{equidecomposable} and denote this as $A\sim B$ if there are a partition $A=A_1\cup\dots\cup A_n$ (into finitely many parts) and isometries $\gamma_1,\dots,\gamma_n$
of $\I R^k$ such that the images of the parts $\gamma_1(A_1),\dots,\gamma_n(A_n)$ partition~$B$. In other words, we can cut $A$ into finitely many
pieces and rearrange them to form the set $B$. When this can be done is a very basic question that one can ask about
two sets and, as Dubins, Hirsch, and Karush \cite[Page 
239]{dubins+hirsch+karush:63} write, ``\emph{variants of the problems studied 
here already occur in Euclid}". We refer the reader to various surveys and 
expositions of this area  
(\cite{gardner:91,gardner+wagon:89,hertel+richter:03,laczkovich:93:rimut,
laczkovich:94,laczkovich:02,wagon:81}) as well as the excellent book by 
Tomkowicz and Wagon~\cite{tomkowicz+wagon:btp}.

The version that is closest to our everyday intuition (e.g.\ via puzzles like ``Tangram'', ``Pentomino'', or ``Eternity'') 
is perhaps the 
\emph{dissection congruence} in $\I R^2$ where the pieces have to be polygonal 
and their boundary can be ignored when taking partitions. A well-known
example from elementary mathematics is finding  the area of a triangle by dissecting it into a rectangle. In fact, as it was discovered around 1832 independently by Bolyai and Gerwien, any  two polygons of the same area are congruent by dissections. (Apparently, Wallace proved this result already in 1807; see~\cite[Pages 34--35]{tomkowicz+wagon:btp} for a historical account and further references.) The equidecomposition problem for polygons is also completely resolved:  a result of Tarski~\cite{tarski:24} (see e.g.\ \cite[Theorem~3.9]{tomkowicz+wagon:btp}) gives that any two polygons of the same area are equidecomposable. 

Banach and Tarski~\cite{banach+tarski:24} proved that, in dimensions $3$ or higher, any two bounded sets with
non-empty interior are equidecomposable; in particular, we get the famous Banach-Tarski Paradox that a ball
can be doubled. On the other hand, as it was also shown in~\cite{banach+tarski:24} by using the earlier results of Banach \cite{banach:23}, a ball in $\I R^k$ cannot be doubled  for $k=1,2$. This prompted von Neumann~\cite{neumann:29} to investigate  what makes the cases $k=1,2$ different 
using the group-theoretic point of view, which started the study of amenable groups. 

Around that time, Tarski~\cite{tarski:25} asked if the disk and square in $\I R^2$ of the same area 
are equidecomposable, which became known as \emph{Tarski's circle squaring}. 
Von Neumann~\cite{neumann:29} showed that circle squaring is possible if arbitrary 
measure-preserving affine transformations are allowed. On the other hand, some negative evidence
was provided by Dubins, Hirsch, and Karush \cite{dubins+hirsch+karush:63} who 
showed that
a circle and a square are not \emph{scissor congruent} (when
the pieces are restricted to be topological disks and their boundary can be ignored) and by  Gardner \cite{gardner:85} who proved that circle-squaring is
impossible if we use a locally discrete subgroup of isometries of $\I R^2$. However,  the deep paper of
Laczkovich~\cite{laczkovich:90} showed that the answer to Tarski's question is affirmative. In fact, his main result (coming from the papers~\cite{laczkovich:90,laczkovich:92b,laczkovich:92}) is much more general and stronger. 
In order to state it, we need some definitions.

We call two sets $A,B\subseteq \I R^k$ \emph{equivalent} (and denote this by 
$A\simTr B$) if they are equidecomposable using translations, that is, there are 
partitions $A=A_1\cup\dots\cup A_m$ and
$B=B_1\cup\dots\cup B_m$, and vectors $\V v_1,\dots,\V v_m\in\I R^k$ such that 
$B_i=A_i+\V v_i$ for each $i\in\{1,\dots,m\}$.
Let $\lambda=\lambda_k$ denote the Lebesgue measure on $\I R^k$. The \emph{box} 
(or \emph{grid}, or \emph{upper Minkowski}) \emph{dimension}  of $X\subseteq \I 
R^k$~is
 $$
 \Delta(X):=k-\liminf_{\eps\,\to\, 0^+} \frac{\log \lambda\big(\,\{\V x\in\I R^k\mid \dist(\V x,X)\le \eps\}\,\big)}{\log \eps},
 $$
 where $\dist(\V x, X)$ means e.g.\ the $L^\infty$-distance from the point $\V x$ to the set $X$. 
Let $\partial X$ denote the topological boundary of $X$. It is easy to show that
if $A\subseteq \I R^k$ satisfies $\Delta(\partial A)<k$, then $A$ is Lebesgue
measurable and, furthermore, $\lambda(A)>0$ if and only if $A$ has non-empty 
interior. With these observations, the result of Laczkovich can be formulated as 
follows.

\begin{theorem}[Laczkovich \cite{laczkovich:90,laczkovich:92b,laczkovich:92}]\label{th:L}
Let $k\ge 1$ and let $A, B\subseteq \I R^k$ be bounded sets with non-empty 
interior such that 
$\lambda(A)=\lambda(B)$, $\Delta(\partial A)<k$, and $\Delta(\partial B)<k$. 
Then $A$
and $B$ are equivalent.
\end{theorem}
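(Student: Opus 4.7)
The plan is to reduce Theorem~\ref{th:L} to a matching problem on a bipartite graph built from a fine lattice, then solve that matching problem via an infinite version of Hall's theorem combined with a quantitative equidistribution estimate for a carefully chosen collection of translation vectors.

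First I would discretise. After rescaling so that $A, B \subseteq [0,N]^k$ with $N$ large, I would approximate $A$ by a union of axis-aligned unit cubes indexed by a set $A^* \subseteq \I Z^k$, and similarly define $B^* \subseteq \I Z^k$ for $B$. The hypothesis $\Delta(\partial A)<k$ ensures that the symmetric difference of $A$ with $\bigcup_{z\in A^*}(z+[0,1)^k)$ has measure $O(N^{k-\eta})$ for some $\eta>0$, and the same bound holds for $B$. Since $\lambda(A)$ and $\lambda(B)$ are of order $N^k$, the residual "boundary" pieces are of strictly lower order than the bulk, and can be matched separately by absorbing them into a bounded number of additional pieces using an elementary equidecomposition on the side. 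The main task thus reduces to producing a bijection $\varphi\colon A^*\to B^*$ such that the difference $\varphi(z)-z$ takes only finitely many values.

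Next I would fix a large finite symmetric set $\Gamma\subseteq\I Z^k$ of allowed integer translations and form the bipartite graph $G$ on $A^*\cup B^*$ in which $a\in A^*$ is joined to $b\in B^*$ whenever $b-a\in\Gamma$. The set $\Gamma$ would be taken as a truncation of an additive sub-semigroup generated by the integer parts of the sums $\sum_i n_i\V v_i$ for bounded $(n_i)\in\I Z^d$, where $\V v_1,\dots,\V v_d\in\I R^k$ are fixed vectors whose coordinates are simultaneously badly approximable, so that the fractional parts of $\sum_i n_i\V v_i$ are very uniformly distributed modulo $\I Z^k$. A perfect matching in the resulting locally finite graph $G$ then exists by the infinite Hall--Rado theorem (via a standard compactness argument), provided the Hall condition $|N(S)|\ge |S|$ holds for every finite $S\subseteq A^*$, together with the symmetric condition on the $B^*$ side.

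The main obstacle, and the heart of the argument, is verifying Hall's condition. For a finite $S\subseteq A^*$ one has $N(S)=(S+\Gamma)\cap B^*$, and one must compare $|S|$ with $|(S+\Gamma)\cap B^*|$. The strategy is to show that, up to an error controlled by the size of the topological boundary of $S$ inside the lattice, the set $S+\Gamma$ behaves like a thick neighbourhood of $S$ whose $B^*$-mass, after normalisation, is nearly $|S|\cdot\lambda(B)/\lambda(A)=|S|$ thanks to the hypothesis $\lambda(A)=\lambda(B)$. Making this precise requires an Erd\H{o}s--Tur\'an--Koksma type discrepancy estimate for the multi-dimensional sequence $\{n_1\V v_1+\cdots+n_d\V v_d\}$, and the hypotheses $\Delta(\partial A),\Delta(\partial B)<k$ enter precisely to guarantee that the perimeter contribution at scale $N$ is of strictly smaller order than the bulk, so that $\Gamma$ can be chosen (depending on $A$ and $B$) large enough to swamp every error term and deliver Hall's condition uniformly. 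Balancing the discrepancy of the $\V v_i$ against the boundary dimension of $A$ and $B$ is exactly the delicate arithmetic-geometric step that powers Laczkovich's proof.
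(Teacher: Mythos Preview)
Your proposal contains several of the right ingredients (Hall--Rado for locally finite bipartite graphs, discrepancy via Erd\H{o}s--Tur\'an--Koksma, the box-dimension hypothesis controlling boundary errors), but the overall architecture is wrong, and the discretisation step is a genuine gap.

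The fatal issue is the reduction to finite lattice sets $A^*,B^*\subseteq\I Z^k$ via cube approximation. First, $|A^*|$ and $|B^*|$ need not coincide, so no perfect matching can exist between them. Second, and more seriously, the claim that the residual boundary pieces ``can be matched separately by absorbing them into a bounded number of additional pieces using an elementary equidecomposition on the side'' is circular: those residual sets have measure of order $N^{k-\eta}$, are geometrically complicated, and establishing their equidecomposability is not easier than the original problem. There is no elementary side-argument here; this step hides the entire difficulty. Third, even a perfect matching $A^*\to B^*$ only equidecomposes the cube-unions, not $A$ and $B$ themselves.

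Laczkovich's actual approach avoids discretising $A$ and $B$ altogether. One passes to the torus $\I T^k$, chooses $d$ \emph{free} vectors $\V x_1,\dots,\V x_d\in\I T^k$ (with $d$ possibly much larger than $k$, depending on $\Delta(\partial A)$ and $\Delta(\partial B)$), and works orbit by orbit under the resulting $\I Z^d$-action. Each orbit is identified with $\I Z^d$, and $A$, $B$ restrict to \emph{infinite} subsets $A_{\V u},B_{\V u}\subseteq\I Z^d$. The bipartite graph lives on $\I Z^d$, not on $\I Z^k$, and the allowed translations are real vectors $\sum n_j\V x_j$ with $\|\V n\|_\infty\le M$, not integer vectors. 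Hall's condition is then verified via a uniformity bound: one shows that $A_{\V u}$ and $B_{\V u}$ are $\Phi$-uniform of the same density for a function $\Phi$ with $\sum_i\Phi(2^i)/2^{(d-1)i}<\infty$, which is exactly where the Erd\H{o}s--Tur\'an--Koksma inequality and the box-dimension hypothesis enter (this is Lemma~\ref{lm:LX}). The orbit-wise bijections are then assembled into an equidecomposition via the Axiom of Choice, with no residual set to handle. Your use of ``integer parts of $\sum n_i\V v_i$'' conflates the two pictures: the $\V v_i$ are used as genuine real translations, and the lattice structure comes from the orbit, not from rounding.
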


Theorem~\ref{th:L} applies to circle squaring since the boundary of each of these sets has box dimension~$1$. 
As noted in \cite{laczkovich:92b}, the inequality 
$\Delta(\partial A)<k$ holds if $A\subseteq \I R^k$ is a convex bounded set or if $A\subseteq \I R^2$ has connected boundary
of finite linear measure; thus Theorem~\ref{th:L} applies to such sets as well.

Note that  the
condition that $\lambda(A)=\lambda(B)$ is necessary in Theorem~\ref{th:L}. Indeed, the group 
of translations of $\I R^k$ is amenable (since it is an Abelian group)  and 
therefore the 
Lebesgue measure on $\I R^k$ can be extended to a translation-invariant finitely 
additive measure defined on all subsets (and so equivalent sets which are 
measurable must necessarily have the same measure); see e.g.\ \cite[Chapter 12]{tomkowicz+wagon:btp}
for a detailed discussion. 
Laczkovich~\cite{laczkovich:93} showed that one cannot replace the box dimension 
with the Hausdorff dimension 
in Theorem~\ref{th:L}; see 
also~\cite{laczkovich:03} for further examples of non-equivalent sets.

The proof of Theorem~\ref{th:L} by Laczkovich directly relies on the Axiom of 
Choice in a crucial way. Thus the pieces that he obtains need not be 
measurable. Laczkovich~\cite[Section 10]{laczkovich:90}  writes: \emph{``The 
problem whether or not the circle can be squared with measurable pieces seems to 
be the most interesting.''} 

This problem remained open until now, although some modifications of it were resolved. 
Henle and Wagon (see~\cite[Theorem 9.3]{tomkowicz+wagon:btp}) showed that, for any $\eps>0$, one can square a circle with Borel pieces if one is allowed to use similarities of the plane with scaling factor between $1-\eps$ and $1+\eps$. Pieces can be made even more regular if some larger class of maps can be used (such as arbitrary similarities or affine maps), see e.g.\ \cite{hertel+richter:03,richter:02,richter:05,richter:07}. Also,
if countably many pieces are allowed, then a simple measure exhaustion argument 
shows that, up to a nullset, one can square a circle with measurable pieces (see \cite[Theorem~41]{banach+tarski:24} or~\cite[Theorem~11.26]{tomkowicz+wagon:btp});
the error nullset can be then eliminated by e.g.\ applying Theorem~\ref{th:L}.

The authors of this paper prove in \cite{grabowski+mathe+pikhurko:expansion} that every two bounded measurable
sets $A,B\subseteq \I R^k$, $k\ge 3$, with non-empty interior and of the same measure are equidecomposable
with Lebesgue measurable pieces. In particular, this gives a measurable version 
of Hilbert's third problem (as asked by Wagon~\cite[Question~3.14]{wagon:btp}): 
one can 
split
a regular tetrahedron into finitely many measurable pieces and 
rearrange them into a cube. These 
results
rely on the spectral gap property of  the natural action 
of 
$SO(k)$ on the $(k-1)$-dimensional sphere in $\I R^k$ for $k\ge 3$ and do not 
apply when $k\le 2$. Also, the equidecompositions  obtained in 
\cite{grabowski+mathe+pikhurko:expansion}
cannot be confined to use
translations only. 

Here we fill a part of this gap. Namely,
our main main result (Theorem~\ref{th:Main}) shows that it can be additionally 
required in Theorem~\ref{th:L} 
that all pieces  are Lebesgue measurable.

In fact, Theorem~\ref{th:Main} gives pieces that are also 
\emph{Baire measurable} 
(or \emph{Baire} for short),
that is, each one is the symmetric difference of a Borel set and a meagre 
set. The study of equidecompositions with Baire sets was largely
motivated by \emph{Marczewski's problem} from 1930 whether $\I R^k$ admits a 
non-trivial isometry-invariant
finitely additive  Borel measure that vanishes on bounded meagre 
sets. 
It is not hard to show that the answer is positive for $k\le 2$, see 
e.g.~\cite[Corollary~13.3]{tomkowicz+wagon:btp}. However, the 
problem for $k\ge 3$ remained open for over 60 years until it was resolved
in the negative by Dougherty and Foreman~\cite{dougherty+foreman:92,dougherty+foreman:94} who
proved in particular
that any two bounded Baire 
sets $A,B\subseteq \I R^k$ with non-empty interior are 
equidecomposable with Baire measurable pieces (and thus a ball can be doubled with Baire pieces). A short and elegant 
proof of a
more general
result was recently given by Marks and Unger~\cite{marks+unger:16} 
(see also~\cite{kechris+marks:survey}).
However, as noted in~\cite[Page~406]{marks+unger:16}, the problem whether 
circle
squaring is possible with Baire measurable parts remained open. Also, the 
results 
in~\cite{dougherty+foreman:92,dougherty+foreman:94,marks+unger:16} do not 
apply to the translation equidecomposability $\simTr$, even in 
higher dimensions. Here we 
resolve
these
questions in the affirmative, under the assumptions of Theorem~\ref{th:L}:

\begin{theorem}
\label{th:Main}
Let $k\ge 1$ and let $A, B\subseteq \I R^k$ be bounded sets with non-empty 
interior such that 
$\lambda(A)=\lambda(B)$, $\Delta(\partial A)<k$, and $\Delta(\partial B)<k$.  
Then $A\simTr B$ with parts that are both Baire and 
Lebesgue 
measurable.
\end{theorem}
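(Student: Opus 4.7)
The plan is to reinterpret Laczkovich's proof of Theorem~\ref{th:L} as solving a matching problem in a countable bipartite graph arising from a translation action, and then to upgrade the matching to be simultaneously Baire and Lebesgue measurable. Following Laczkovich, I would first choose translation vectors $\V u_1,\dots,\V u_d\in\I R^k$ whose coordinates, together with $1$, are linearly independent over $\I Q$ and satisfy a Diophantine-genericity condition; let $\Gamma$ be the free abelian group they generate under translation. After passing to $A\setminus B$ and $B\setminus A$, I form the bipartite graph $G$ on $A\sqcup B$ with an edge, labelled by $i$, between $a\in A$ and $b\in B$ whenever $b-a\in\{\pm \V u_1,\dots,\pm \V u_d\}$. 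A perfect matching in $G$ gives a partition $A=A_1\cup\dots\cup A_{2d}$ witnessing $A\simTr B$, and the pieces inherit any measurability of the matching.

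The analytic input is Laczkovich's discrepancy bound: for a generic choice of the $\V u_i$ and any bounded set $X$ with $\Delta(\partial X)<k$, uniformly in $\V x\in\I R^k$,
\[
\#\{\gamma\in F\mid \V x+\gamma\in X\}=\lambda(X)\,|F|+O\bigl(|F|^{1-\delta}\bigr)
\]
for sufficiently regular finite $F\subseteq\Gamma$, with $\delta=\delta(X)>0$. Combined with $\lambda(A)=\lambda(B)$, this yields a strong Hall-type condition on $G$ with polynomial slack: for every finite $S\subseteq A$ (or $S\subseteq B$), $|N_G(S)|\ge |S|+c\,|\partial_G S|^{1-\delta'}$. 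This is enough to produce an abstract matching via Hall's theorem on each $\Gamma$-orbit, which is precisely Laczkovich's original route to Theorem~\ref{th:L}.

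The main obstacle is to realise the matching in a way that is simultaneously Baire and Lebesgue measurable. Here I would follow the toast-construction paradigm of Marks--Unger~\cite{marks+unger:16}: build a hierarchy of nested Borel finite subsets $F_1\subseteq F_2\subseteq\dots$ that exhaust most of each $\Gamma$-orbit with well-separated boundaries; find matchings inside each $F_n$ using Hall's theorem; and patch successive layers together by augmenting paths of length $O(\log n)$, whose existence is guaranteed by the polynomial expansion slack above. The subtlety, absent from~\cite{marks+unger:16} and from~\cite{grabowski+mathe+pikhurko:expansion}, is twofold: (i)~$\Gamma$ is amenable, so there is no geometric spectral gap to drive matching (as was available for $k\ge 3$) and one must rely on discrepancy alone; and (ii)~the toast must be chosen so that its complement is \emph{simultaneously meagre and null} in $A\cup B$. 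I would tackle (ii) via an i.i.d.\ random Borel labelling of a Borel transversal of $\Gamma$-orbits, applying a Borel--Cantelli-style argument in both categories so that, almost surely, the toast is both comeagre and conull. The leftover exceptional set, being both meagre and null, can finally be absorbed by a standard measurable exhaustion argument, completing the proof.
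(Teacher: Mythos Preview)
Your high-level strategy---reduce to a matching problem on $\Gamma$-orbits, use Laczkovich's discrepancy bounds to obtain a Hall-type expansion, and build the matching by augmenting paths---is the paper's strategy as well. But several concrete steps would not go through as written.

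First, the bipartite graph you describe has edges only for the $2d$ generators $\pm\V u_i$, so a perfect matching would give just $2d$ pieces. Laczkovich's argument does not produce a matching in that graph; one must allow all translations $\sum n_j\V u_j$ with $\|\V n\|_\infty\le M$ for a large constant $M$, and the expansion estimate (Part~\ref{it:StrongHall} of Lemma~\ref{lm:ShortAugAll}) is proved only for this much denser graph.

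Second, since $\Gamma\cong\I Z^d$ is amenable, the expansion is only isoperimetric, $|N(S)|\ge |S|+c\,|S|^{(d-1)/d}$, not exponential. Augmenting paths are therefore of length comparable to the linear scale, not $O(\log n)$; bounding them (Lemma~\ref{lm:ShortAug}) and showing that the resulting edit costs are summable is one of the main technical contributions, and it is where the stronger $(d-1)$-dimensional $\Psi$-uniformity hypothesis in Part~\ref{it:Lebesgue} of Theorem~\ref{th:Suff} is actually used.

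Third, a Borel transversal of $\Gamma$-orbits does not exist (the action is free and ergodic), so the ``i.i.d.\ random labelling of a Borel transversal'' is not well-defined. More importantly, the paper does \emph{not} attempt to build a single matching that is simultaneously Baire and Lebesgue measurable. It runs two genuinely different constructions: for Lebesgue measure, a hierarchy of approximate $N_i$-grids around sparse seed sets, with maximum matchings inside cubes refined level by level (\secref{proof}); for Baire category, an inductive Marks--Unger-style argument that adds sparse edge sets while preserving extendability, with a combinatorial lemma (Lemma~\ref{lm:Baire}) replacing the spectral-gap input of~\cite{marks+unger:16} (\secref{Baire}). The two equidecompositions are then merged by restricting the Baire one to an invariant nullset $X$ with meagre complement (such $X$ exists by~\cite{oxtoby:mc}) and the Lebesgue one to its complement. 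Your plan to build a single toast that is at once comeagre and conull would, if it worked, bypass this splitting; but the mechanism you describe is not available, and no such unified construction appears in the paper.
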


In addition to implying measurable translation-only versions of Tarski's 
circle squaring, Hil\-bert's 
third problem, and
Walla\-ce-Bolyai-Gerwien's
theorem (a question of Laczkovich~\cite[Page~114]{laczkovich:90}),
 Theorem~\ref{th:Main} also disproves the
following conjecture of Gardner~\cite[Conjecture~5]{gardner:91} for all $k\ge 2$.

\begin{conjecture}\label{cj:gardner} Let $P$ be a polytope and $K$ a convex body in $\I R^k$. If $P$ and $K$ are
equidecomposable with Lebesgue measurable pieces under the isometries  from
an amenable group, then $P$ and $K$ are equidecomposable with convex pieces under the same isometries.
\end{conjecture}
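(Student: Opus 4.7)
My plan is to \emph{refute} Conjecture~\ref{cj:gardner} for every $k\ge 2$ rather than prove it, since the conjecture is in fact false; the task therefore reduces to producing an explicit counterexample. The translation group $(\I R^k,+)$ is abelian and hence amenable, so it suffices to exhibit a polytope $P$ and a non-polyhedral convex body $K$ in $\I R^k$ that are translation-equidecomposable with Lebesgue measurable pieces but admit no translation-equidecomposition with convex pieces.

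The measurable half of the counterexample is delivered by Theorem~\ref{th:Main}. I would take $P$ to be a cube and $K$ a Euclidean ball of the same volume. Both sets are bounded with non-empty interior, have equal Lebesgue measure by construction, and their boundaries are smooth $(k-1)$-dimensional manifolds, so $\Delta(\partial P)=\Delta(\partial K)=k-1<k$. Theorem~\ref{th:Main} therefore yields a partition $P=P_1\cup\cdots\cup P_n$ and translation vectors $\V v_1,\dots,\V v_n\in\I R^k$ with each $P_i$ both Baire and Lebesgue measurable and $\{P_i+\V v_i\}$ partitioning $K$. In particular, the hypothesis of Conjecture~\ref{cj:gardner} is satisfied with $G$ the amenable translation group.

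The harder step is ruling out a convex translation-equidecomposition of the same pair $(P,K)$. For $k=2$ this is immediate from the classical theorem of Dubins, Hirsch and Karush~\cite{dubins+hirsch+karush:63}, which asserts that a disk and a square of equal area are not scissor-congruent even under arbitrary isometries. Any planar convex set with non-empty interior is a topological disk, so a convex translation-equidecomposition of $P$ and $K$ would in particular furnish a scissor-congruence, contradicting their theorem. For $k\ge 3$ I would use a boundary-matching argument. If $P=\bigcup_i C_i$ and $K=\bigcup_i(C_i+\V v_i)$ with each $C_i$ convex of positive measure, then each intersection $(C_i+\V v_i)\cap\partial K$, after discarding lower-dimensional debris, is a relatively open \emph{strictly convex} subset of the sphere $\partial K$. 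Translating by $-\V v_i$ embeds these strictly convex spherical patches into $P$, where each patch sits as a $(k-1)$-face of the convex piece $C_i$. Since the partition is convex, every such face must either lie inside $\partial P$ or coincide with a face of an adjacent convex piece $C_j$; both options force the patch to be affine, contradicting its strict convexity.

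The principal obstacle is making the higher-dimensional boundary-matching argument fully rigorous: one must carefully account for pieces whose contribution to $\partial K$ has $(k-1)$-dimensional measure zero, control the combinatorics of the face-adjacency graph of a convex partition, and justify the claim that two adjacent convex bodies meet in an affine face. The planar case, by contrast, is a one-line appeal to classical scissor-incongruence. Once the geometric step is secured in every dimension $k\ge 2$, combining it with Theorem~\ref{th:Main} produces a polytope and a convex body which satisfy the hypothesis of Conjecture~\ref{cj:gardner} under the amenable translation group but violate its conclusion, thereby disproving the conjecture.
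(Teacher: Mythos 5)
Your proposal is correct and takes essentially the same approach as the paper: the paper's refutation of Conjecture~\ref{cj:gardner} also takes $P$ a cube and $K$ a ball of equal volume, invokes Theorem~\ref{th:Main} for the measurable translation-equidecomposition, and asserts (without proof, as ``not hard to show directly'') that no convex equidecomposition exists. You have additionally supplied the geometric argument the paper omits — the appeal to Dubins--Hirsch--Karush for $k=2$ and the boundary-curvature contradiction for $k\ge 3$ — and while the higher-dimensional sketch needs the tightening you yourself flag (a convex piece abutting a strictly curved patch from the concave side cannot be convex, rather than ``coinciding with a face''), the strategy matches the paper's.
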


Indeed, for example, let $P$ be a cube and $K$ be a ball of the same volume. It is not hard to show directly that $K$ and $P$ are not equidecomposable with convex pieces, even under the groups of all
isometries of $\I R^k$ for $k\ge 2$. Since Theorem~\ref{th:Main}
uses only translations (that form an  amenable group),  Conjecture~\ref{cj:gardner} is false.

This paper is organised as follows. In \secref{notation} we reduce the problem to the 
torus $\I T^k:=\I R^k/\I Z^k$ 
and state a sufficient condition for measurable equivalence 
in Theorem~\ref{th:Suff}. We also describe there how Theorem~\ref{th:Main} can 
be deduced from Theorem~\ref{th:Suff},
using some results of Laczkovich~\cite{laczkovich:92b}.
The main bulk of this paper consists of the proof of 
Theorem~\ref{th:Suff} in Sections~\ref{proof} and \ref{Baire}. These 
sections are dedicated to respectively Lebesgue and Baire measurability (while
some common definitions and auxiliary results are collected in \secref{common}).
We organised the presentation so that Sections~\ref{proof} and 
\ref{Baire} can essentially be read independently of each other.
\secref{concluding} 
contains some concluding remarks.

In order to avoid ambiguities, a \emph{closed}  (resp.\ \emph{half-open}) 
interval will always mean an interval of \emph{integers} (resp.\ \emph{reals}); 
thus, for 
example,
$[m,n]:=\linebreak\{m,m+1,\dots,n\}\subseteq \I Z$ while $[a,b):=\{x\in \I R\mid a\le 
x<b\}$.
Also, we denote $[n] :=\{1,\dots,n\}$ and $\I N:=\{0, 1, 2, \ldots\}$.

\section{Sufficient condition for measurable equivalence}\label{notation}\label{Suff}

The \emph{$k$-dimensional torus} $\I T^k$
is the quotient of the Abelian group $(\I R^k,+)$ by the subgroup $(\I Z^k,+)$. We
identify $\I T^k$ with the real cube $[0,1)^k$, endowed with  
the addition of vectors~\mbox{modulo}~$1$. 

By scaling the bounded sets $A, B\subseteq \I R^k$ by the same factor and 
translating them,
we can assume that they are subsets of $[0,1)^k$.  
Note that if $A,B\subseteq [0,1)^k$ are 
(measurably) equivalent with translations taken modulo $1$, then they are (measurably) equivalent in $\I R^k$ as well using at most $2^k$ times as many translations. (In fact, if each of $A,B$ has diameter less than $1/2$
with respect to the $L^\infty$-distance, then we do not need to increase the 
number of translations at all.)
So we work inside the torus from now on.

Suppose that we have fixed some vectors $\V x_1,\dots,\V x_d\in \I T^k$ that are \emph{free}, that is, no non-trivial integer
combination of them is the zero element of $(\I T^k,+)$ (or, equivalently, 
$\V x_1,\dots,\V x_d,\V e_1,\dots,\V e_k$, when viewed as vectors in $\I R^k$, 
are linearly independent over the rationals, where $\V e_1,\dots,\V e_k$ are the standard basis vectors of $\I R^k$).

 When reading the following definitions (many of which implicitly depend on $\V x_1,\dots,\V x_d$), the reader is advised to keep in mind
the following connection to Theorems \ref{th:L} and~\ref{th:Main}: we fix some 
large integer $\MD$ and try to establish
the equivalence $A\simTr B$ by translating  only by vectors from the set
 \begin{equation}\label{eq:TranslationSet}
 \VV{\V x_1,\dots,\V x_d}{\MD}:=\left\{\, n_1 \V x_1+\dots+ n_d\V x_d
 \,\mid\, 
\V n\in \I Z^d,\ \|\V n\|_\infty\le \MD\,\right\}.
 \end{equation}
 Thus, if we are successful, 
then
the total number of pieces is at most $|\VV{\V x_1,\dots,\V x_d}{\MD}|= (2\MD+1)^d$.

By a  \emph{coset} of $\V u\in\I T^k$ we will mean the coset taken with respect to the subgroup
of $(\I T^k,+)$ generated by $\V x_1,\dots,\V x_d$, that is, the set $\{\V 
u+\sum_{j=1}^d n_j \V x_j\mid \V n\in \I Z^d\}\subseteq \I T^k$. For  
$X\subseteq \I T^k$, we define
 $$
 X_{\V u}:=\left\{\V n\in\I Z^d \,:\, \V u+n_1 \V x_1+\dots+ n_d\V x_d \in X \right\}.
 $$
  Informally speaking, $X_{\V u}\subseteq \I Z^d$ records which elements of the coset of 
$\V u\in\I 
T^k$ are in $X$. 

If, for every $\V u\in\I T^k$, we have a bijection $\C M_{\V u}:A_{\V u}\to B_{\V u}$ such that
 \beq{eq:PhiU}
 \|\C M_{\V u}(\V n)-\V n\|_\infty\le \MD,\quad\mbox{for
all $\V n\in A_{\V u}$},
 \eeq
 then Theorem~\ref{th:L} follows. Indeed, using the Axiom of Choice select a set $U\subseteq 
\I T^k$ that intersects each coset in precisely one element. Now, each $\V a\in A$
can be uniquely written as $\V u+\sum_{j=1}^d n_j \V x_j$ with $\V u\in U$ and 
$\V n\in\I Z^d$; if we assign 
$\V a$ to the piece which is translated by the vector $\sum_{j=1}^d (m_j- n_j) 
\V x_j$ where $\V m:= \C M_{\V u}(\V n)$, then we get the desired equivalence 
$A\simTr B$. This reduction was used by 
Laczkovich~\cite{laczkovich:90,laczkovich:92b,laczkovich:92}; of course, the 
main challenge  he faced was establishing the existence of the bijections $\C 
M_{\V 
u}$
as in~(\ref{eq:PhiU}). Here, in order to prove
Theorem~\ref{th:Main}, we will additionally need that the family  
$(\C M_{\V u})_{\V u\in\I T^k}$ is consistent for different choices of $\V 
u$ and gives measurable parts.

By an \emph{$n$-cube} $Q\subseteq \I Z^d$ we mean the product of $d$ intervals in $\I Z$ of size $n$,
i.e.\
$Q=\prod_{j=1}^d [n_j,n_j+n-1]$ for some $(n_1,\dots,n_d)\in\I Z^d$. If $n$ is an integer power of $2$,
we will call the cube $Q$ \emph{binary}. 
Given a function $\Phi:\{2^i\mid i\in\I N\}\to \I R$ and a real $\delta\ge 0$, a 
set $X\subseteq\I Z^d$ is called \emph{$\Phi$-uniform (of density $\delta$)} 
if, for every $i\in \I N$ and
$2^i$-cube $Q\subseteq \I Z^d$, we have that
 \beq{eq:PsiUniform}
 \big|\, |X\cap Q|-\delta\, |Q|\,
 \big|\le \Phi(2^i).
 \eeq
 In other words,
this definition says that the discrepancy with respect to binary cubes 
between the counting measure of $X$ and the measure of constant density $\delta$ is upper bounded 
by $\Phi$. A set $Y\subseteq \I T^k$ is called \emph{$\Phi$-uniform (of 
density $\delta$ with respect to $\V x_1,\dots,\V x_d$)} if $Y_{\V u}$ is 
$\Phi$-uniform
of density $\delta$ for every $\V u\in \I T^k$. 

These notions are of interest to us because of the following sufficient 
condition for $A\simTr B$ that  directly follows
from Theorems 1.1 and 1.2 in Laczkovich~\cite{laczkovich:92}.

\begin{theorem}[Laczkovich~\cite{laczkovich:92}] 
\label{th:LSuff}
Let $k,d\ge 1$ be integers, let $\delta>0$, let $\V x_1,\dots,\V 
x_d\in
\I T^k$ be free,  let a function $\Phi:\{2^i\mid i\in\I N\}\to\I R$ satisfy
 \beq{eq:LSumPsiFin}
  \sum_{i=0}^\infty \frac{\Phi(2^i)}{2^{(d-1)i}}<\infty,
  \eeq
 and  let sets $A,B\subseteq \I T^k$
be $\Phi$-uniform of density $\delta$ with
respect to $\V x_1,\dots,\V x_d$. Then $A\simTr B$, using translations that are integer combinations of 
the vectors~$\V x_j$.\end{theorem}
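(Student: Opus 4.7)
The plan is to reduce, for each coset of $\langle \V x_1,\ldots,\V x_d\rangle$, to a bipartite matching problem on $\I Z^d$ and verify the relevant Hall's condition via a dyadic transport argument driven by the $\Phi$-uniformity hypothesis. Concretely, fix $\V u \in \I T^k$ and consider the integer sets $A_{\V u},B_{\V u}\subseteq \I Z^d$. If one can exhibit, for some integer $\MD$ \emph{uniform in $\V u$}, a bijection $\C M_{\V u}\colon A_{\V u}\to B_{\V u}$ satisfying \eqref{eq:PhiU}, then the reduction already described just after \eqref{eq:PhiU} (choose a transversal of the cosets via the Axiom of Choice and translate each point of $A$ accordingly) delivers $A\simTr B$ with all translation vectors lying in $\VV{\V x_1,\ldots,\V x_d}{\MD}$.

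The existence of such a bijection is a perfect-matching question on the locally-finite bipartite graph $G_{\V u}$ on vertex classes $A_{\V u}$ and $B_{\V u}$ in which $\V n$ is adjacent to $\V m$ iff $\|\V n-\V m\|_\infty \le \MD$. By the König--Hall theorem for bipartite graphs of uniformly bounded degree, it suffices to verify Hall's condition on both sides; equivalently, by max-flow/min-cut together with the total unimodularity of the bipartite incidence matrix, one must produce an integer-valued unit transport of the counting measure on $A_{\V u}$ onto that on $B_{\V u}$ along edges of $\ell_\infty$-length at most $\MD$. To build such a transport, decompose $\I Z^d$ into the standard nested grid of binary cubes; by \eqref{eq:PsiUniform} the $A$-versus-$B$ discrepancy on any $2^i$-cube is at most $2\Phi(2^i)$. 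Process scales from large to small: at scale $2^{i+1}$, equalising the two half-cubes inside each $2^{i+1}$-cube requires routing $O(\Phi(2^{i+1}))$ units of signed mass across the separating $(d{-}1)$-dimensional interior face, which contains $\Theta(2^{(d-1)i})$ unit cells, so spreading the flow uniformly on that face costs a per-cell shift of order $\Phi(2^i)/2^{(d-1)i}$. Summing across all scales bounds the total $\ell_\infty$-displacement by a constant multiple of $\sum_{i=0}^\infty \Phi(2^i)/2^{(d-1)i}$, which is finite by \eqref{eq:LSumPsiFin}; take $\MD$ to be any integer exceeding this sum.

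The main obstacle is making everything simultaneously \emph{integer}-valued, \emph{uniform in $\V u$}, and \emph{globally defined} on the infinite lattice. Integrality is essentially free: min-cost flows with integer capacities and integer node-imbalances admit integer-valued optima on any bipartite network, so the transport decomposes into vertex-disjoint unit paths and produces a genuine bijection rather than a fractional matching. Uniformity in $\V u$ is automatic, since the $\Phi$-uniformity bounds the cube discrepancies simultaneously for every coset, yielding the same $\MD$ everywhere. The remaining subtlety is that the dyadic construction extends to infinite scale; this is handled by running the argument on finite truncations $A_{\V u}\cap[-N,N]^d$ and $B_{\V u}\cap[-N,N]^d$, absorbing the global density mismatch of order $\Phi(N)=o(N^{d-1})$ into a thin boundary collar at scale $N$, and then extracting a limiting bijection by a König-style compactness/diagonal argument as $N\to\infty$. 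The uniform bound on the per-cell shift guarantees that this limit is well-defined and still satisfies \eqref{eq:PhiU}.
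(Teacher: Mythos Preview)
The paper does not prove this theorem itself; it cites Theorems~1.1 and~1.2 of Laczkovich~\cite{laczkovich:92}. Laczkovich's route, as the paper later sketches inside the proof of Lemma~\ref{lm:ShortAugAll}, has two distinct steps. First, $\Phi$-uniformity is upgraded to the perimeter bound~\eqref{eq:DAlpha}: for every finite $Y\subseteq\I Z^d$ one has $D_\delta(\CA;Y)\le \CC_0\,p(Y)$ (this is proved by a dyadic decomposition of $Y$, and is where the sum $\sum_i \Phi(2^i)/2^{(d-1)i}$ enters). Second, the perimeter bound is converted into Hall's condition by the smoothing argument around~\eqref{eq:HallDiff}: enlarge $X\subseteq A_{\V u}$ to a union $X_1$ of $(\CC/2)$-grid cubes and then to $X_2\supseteq X_1$ by adjoining all neighbouring cubes; one gets $|\Gamma(X)|-|X|\ge |B_{\V u}\cap X_2|-|A_{\V u}\cap X_1|\ge \delta|X_2\setminus X_1|-\CC_0(p(X_1)+p(X_2))$, and $|X_2\setminus X_1|$ dominates the perimeter terms when the grid scale is large enough. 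Rado's theorem and the Axiom of Choice then finish as you describe.

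Your outer framework (reduce to each coset, verify Hall, apply compactness) matches this, but the heart of your argument has a genuine gap. Your multi-scale balancing does not build a transport along edges of $\ell_\infty$-length at most $\MD$; what it actually builds is a nearest-neighbour flow on $\I Z^d$ with divergence $\mathbbm 1_{A_{\V u}}-\mathbbm 1_{B_{\V u}}$ whose magnitude on each unit edge is at most a constant times $\sum_i \Phi(2^i)/2^{(d-1)i}$. That is a correct (and essentially equivalent) proof of the perimeter bound~\eqref{eq:DAlpha}, but it is \emph{not} a bound on the displacement of any unit of mass: the quantity you sum is flow-per-face-cell, not distance travelled, and a bounded-edge flow on $\I Z^d$ need not decompose into short paths (a unit flow along an interval of length $n$ in $\I Z$ has edge-bound $1$ yet moves mass distance $n$). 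Your appeal to total unimodularity concerns the bipartite matching graph on $A_{\V u}\times B_{\V u}$, whereas the flow you construct lives on the lattice $\I Z^d$; these are different networks, and integrality on one does not convert the other into a bounded-displacement bijection. What is missing is exactly Laczkovich's second step: one must still show that the $\MD$-neighbourhood in $B_{\V u}$ of any finite $X\subseteq A_{\V u}$ has at least $|X|$ points, and this requires the smoothing comparison of $X_1$ against $X_2$ rather than a path decomposition of the flow.
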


Roughly speaking, the condition (\ref{eq:LSumPsiFin}) states that the discrepancy of $A_{\V u}$ and $B_{\V u}$ with respect to any
$2^i$-cube $Q$ decays noticeably faster than the size of the boundary of $Q$ as $i\to\infty$. 
On the other hand, if
a bijection $\C M_{\V u}$ as in (\ref{eq:PhiU}) exists, then the difference between the number of elements
in $A_{\V u}$ and $B_{\V u}$ that are inside any $n$-cube $Q$ is trivially at most 
$(2\MD+1)^d\cdot 2d\cdot n^{d-1}=O(n^{d-1})$. Theorems~1.1 and 1.5 in~\cite{laczkovich:92}
discuss to which degree the above conditions are best possible.

In this paper we establish the following sufficient condition for measurable 
equivalence.

\begin{theorem}
\label{th:Suff} 
Let $k\ge 1$ and $d\ge 2$ be integers, let $\delta>0$, let $\V x_1,\dots,\V x_{d}\in
\I T^k$ be free,  and let a function $\Psi:\{2^i\mid i\in\I N\}\to\I R$ satisfy
 \beq{eq:SumPsiFin}
  \sum_{i=0}^\infty \frac{\Psi(2^i)}{2^{(d-2)i}}<\infty.
  \eeq
 Define $\Phi:\{2^i\mid i\in\I N\}\to\I R$ by 
$\Phi(2^i):=2^i\cdot\Psi(2^i)$ for $i\in\I N$.
 \begin{enumerate}
  \item\label{it:Lebesgue} If Lebesgue measurable sets $A,B\subseteq \I T^k$ are
$\Psi$-uniform of density $\delta$ with
respect to every $(d-1)$-tuple of distinct vectors from  $\{\V x_1,\dots,\V 
x_{d}\}$, then  $A\simTr B$, where all pieces are Lebesgue measurable
and are translated by integer 
combinations of 
the vectors $\V x_j$.
 \item\label{it:Baire} If Baire sets $A,B\subseteq \I T^k$ are
$\Phi$-uniform of density $\delta$ with
respect to $\V x_1,\dots,\V 
x_{d}$, then  $A\simTr B$, where all pieces are Baire
and are translated by integer 
combinations of 
the vectors $\V x_j$.
 \end{enumerate}
\end{theorem}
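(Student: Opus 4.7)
The plan is to construct, for each $\V u\in\I T^k$, a bijection $\C M_{\V u}\colon A_{\V u}\to B_{\V u}$ with uniformly bounded displacement $\|\C M_{\V u}(\V n)-\V n\|_\infty\le M$, such that the family $(\C M_{\V u})_{\V u\in\I T^k}$ depends measurably on~$\V u$. Given such a family, the equidecomposition $A\simTr B$ produced by the reduction described just after Theorem~\ref{th:L} partitions $A$ into finitely many pieces indexed by vectors in the finite translation set $\VV{\V x_1,\dots,\V x_d}{M}$; measurability of $(\C M_{\V u})_{\V u}$ then translates directly into measurability of each piece (Lebesgue in Part~\ref{it:Lebesgue}, Baire in Part~\ref{it:Baire}).

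For Part~\ref{it:Lebesgue}, I would construct the bijections by a hierarchical ``flow'' argument on $\I Z^d$. Working with dyadic $2^i$-cubes, one iteratively balances $A$- and $B$-counts: each cube of side $2^{i+1}$ is partitioned into $2^d$ cubes of side $2^i$, and a flow is defined across the internal faces that transfers the exact surplus from one half to the other. Here the $\Psi$-uniformity on $(d-1)$-tuples becomes essential: it controls the cross-sectional discrepancy along each coordinate hyperplane by $\Psi(2^i)$ rather than by the crude volume-type bound $\Phi(2^i)=2^i\Psi(2^i)$ that a direct application of the full $d$-tuple condition would give. A telescoping sum shows that the total displacement at any point is bounded by a constant multiple of $\sum_{i=0}^{\infty}\Psi(2^i)/2^{(d-2)i}$, which is finite by~\eqref{eq:SumPsiFin}. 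Because this construction is canonical in $A_{\V u},B_{\V u}$, and these depend Lebesgue-measurably on $\V u$ (for each $\V n\in\I Z^d$, the event $\V n\in A_{\V u}$ amounts to $\V u$ lying in a translate of $A$), the family $(\C M_{\V u})_{\V u}$ is measurable.

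For Part~\ref{it:Baire}, the hypothesis is $\Phi$-uniformity on the full $d$-tuple: a weaker discrepancy bound, but now for the entire set of direction vectors. The approach I envisage follows the Marks--Unger philosophy: work with the Borel graph on $A\sqcup B$ whose edges join pairs within displacement at most $M$, identify a comeager set $E\subseteq\I T^k$ of ``generic'' basepoints on which a combinatorial matching can be produced by a convergent iterative procedure, and invoke the Kuratowski--Ulam theorem to pass from fibrewise measurability to Baire measurability of the pieces of $A$. The fact that meagre errors can be discarded freely at every stage---unlike the careful nullset accounting of Part~\ref{it:Lebesgue}---is what permits the weaker $d$-tuple uniformity to suffice.

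The main obstacle I anticipate lies in Part~\ref{it:Lebesgue}: designing the flow so that it is simultaneously canonical enough (hence automatically measurable in~$\V u$), globally a bijection with bounded displacement, and quantitatively tight enough to exploit~\eqref{eq:SumPsiFin}. The $(d-1)$-tuple form of the uniformity hypothesis should emerge naturally, since cross-sectional corrections are made coordinate by coordinate; I expect the final bound to hinge on trading a Hall-type existence argument for a ``transportation''-style construction that produces a genuinely local flow. Part~\ref{it:Baire} should be somewhat cleaner, the principal difficulty being the precise formulation of ``generic'' basepoint and the verification that the iterative combinatorial matching converges on a comeager set.
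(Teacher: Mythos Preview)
Your outline has a structural gap that the paper works hard to overcome: a hierarchical construction based on a fixed dyadic grid on $\I Z^d$ cannot be equivariant. If you build $\C M_{\V u}$ by balancing counts across the standard $2^i$-cubes of $\I Z^d$, then shifting the coset reference point from $\V u$ to $\V u+\sum n_j\V x_j$ shifts $A_{\V u},B_{\V u}$ by $-\V n$, but your dyadic grid does not move with them, so $\C M_{\V u+\sum n_j\V x_j}$ will not be the shift of $\C M_{\V u}$. Without equivariance the family $(\C M_{\V u})_{\V u}$ does not descend to a well-defined map $A\to B$, and ``canonical in $A_{\V u},B_{\V u}$'' is not enough; the construction must also be canonical under integer translations of $\I Z^d$, which a dyadic hierarchy is not. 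The paper's solution is to replace the global grid by \emph{local} grids: it chooses Borel maximal $N_{i+2}$-sparse seed sets $S_i\subseteq\I T^k$, forms Voronoi cells, and lets each seed draw its own $N_i$-grid. This is equivariant by design, but the grids are misaligned at cell boundaries, and much of Section~4 is devoted to controlling that boundary error. A second issue: a flow or transportation plan gives you a fractional coupling, not a bijection; passing from one to the other is exactly where Hall/Rado and hence the Axiom of Choice enter, which is what you are trying to avoid. The paper instead works throughout with genuine (partial) matchings, improving them along short augmenting paths via a non-trivial combinatorial lemma (Lemma~4.1) that guarantees such paths exist inside balanced rectangles; the $(d-1)$-dimensional $\Psi$-uniformity is used not for cross-sectional flow bounds but to control the $(A_{\V u},B_{\V u})$-discrepancy of the thin ``special'' rectangles that arise from grid misalignment.

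For Part~\ref{it:Baire} you correctly point to Marks--Unger, but the mechanism is not Kuratowski--Ulam on a comeagre set of basepoints. The paper builds an increasing chain $\C M_0\subseteq\C M_1\subseteq\cdots$ of Baire matchings in $\C G$ itself, where at stage $i$ one matches a sparse Baire set $A_i$ (or $B_i$) chosen so that $A\setminus\bigcup A_i$ and $B\setminus\bigcup B_i$ are meagre. The entire difficulty is a purely combinatorial lemma (Lemma~5.2): if $\C M_{i-1}$ is extendable to a perfect matching of $\CF$ and the increments $\C M_j\setminus\C M_{j-1}$ are $(r_j+2M)$-sparse with $\sum_j (M/r_j)^{(d-1)/d}$ small, then $\C M_i$ is still extendable. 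This is where the $\Phi$-uniformity is spent, via an isoperimetric/hole-filling argument; no appeal to category-theoretic transfer principles is needed.
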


\begin{rem}\label{re:uniformities}
In the notation of Theorem~\ref{th:Suff}, if 
$X\subseteq \I T^k$ is $\Psi$-uniform
with respect to any $d-1$ vectors from $\{\V x_1,\dots,\V x_d\}$, then $X$ is 
$\Phi$-uniform with respect to $\V x_1,\dots,\V x_d$. (Indeed, we can 
trivially represent any $d$-dimensional $2^i$-cube
in $\I Z^d$ as the disjoint union of $2^i$ copies of the $(d-1)$-dimensional 
$2^i$-cube.) 
Thus the uniformity assumption of Part~\ref{it:Lebesgue}
is stronger than that of Part~\ref{it:Baire} (or of Theorem~\ref{th:LSuff}). 
We do not know if the $\Phi$-uniformity alone
is sufficient in Part~\ref{it:Lebesgue}.
\end{rem}

The following result of Laczkovich~\cite{laczkovich:92b} shows how to pick vectors that satisfy Theorem~\ref{th:LSuff}. Since it is not explicitly stated in~\cite{laczkovich:92b}, we briefly sketch its proof.

\begin{lemma}[Laczkovich~\cite{laczkovich:92b}]
\label{lm:LX}
Let an integer $k\ge 1$ and a set $X\subseteq \I T^k$ satisfy $\Delta(\partial X)<\nolinebreak k$. Then there is $d(X)$ 
such that, for every $d\ge d(X)$,
if we select uniformly distributed independent random vectors $\V x_1,\dots,\V x_d\in \I T^k$ then
with probability 1 there is $C=C(X;\V x_1,\dots,\V x_d)$ such that $X$ is 
$\Phi$-uniform 
of density $\lambda(X)$ with respect to $\V x_1,\dots,\V x_d$, where 
$\Phi(2^i):=C\cdot 2^{(d-2)i}$ for $i\in\I N$.\end{lemma}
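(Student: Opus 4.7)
The plan is to bound, for any $\V u \in \I T^k$ and $2^i$-cube $Q \subseteq \I Z^d$, the discrepancy $\bigl||X_{\V u} \cap Q| - \lambda(X)|Q|\bigr|$ by a constant multiple of $2^{(d-2)i}$, almost surely. I would start by expanding $\mathbbm{1}_X$ into its Fourier series on the torus, $\mathbbm{1}_X(\V y) = \sum_{\V m \in \I Z^k} \hat{a}_{\V m}\, e^{2\pi i \V m \cdot \V y}$, and summing over $\V n \in Q = \prod_{j=1}^d [a_j, a_j + 2^i)$. The $\V m = \V 0$ term contributes exactly $\lambda(X)|Q|$, while each nonzero frequency produces a product of one-dimensional geometric sums, each of absolute value at most $\min(2^i, 1/(2\|\V m\cdot\V x_j\|))$. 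This reduces the lemma to showing
\[
\Sigma_i := \sum_{\V m \ne \V 0} |\hat{a}_{\V m}| \prod_{j=1}^d \min\!\bigl(2^i,\, (2\|\V m \cdot \V x_j\|)^{-1}\bigr) \le C \cdot 2^{(d-2)i}
\]
uniformly in $i$, for some $C = C(X; \V x_1, \ldots, \V x_d)$.

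Two quantitative inputs feed into the bound on $\Sigma_i$. First, the hypothesis $\Delta(\partial X) < k$ gives $\lambda(\{\V z : \dist(\V z, \partial X) \le \eps\}) = O(\eps^\gamma)$ for some $\gamma > 0$; comparing $\mathbbm{1}_X$ with its convolution by a smooth bump of scale $\eps$ translates this into the Fourier-tail bound $\sum_{\|\V m\|_\infty \ge M} |\hat{a}_{\V m}|^2 = O(M^{-\gamma})$, whence Cauchy--Schwarz over dyadic shells $2^s \le \|\V m\|_\infty < 2^{s+1}$ controls the corresponding $\ell^1$-mass of $\hat{a}$ by $O(2^{\beta s})$ for some explicit $\beta = \beta(k, \gamma)$. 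Second, for i.i.d.\ uniform $\V x_1, \ldots, \V x_d \in \I T^k$ and each fixed $\V m \ne \V 0$, the quantities $\V m \cdot \V x_j$ are themselves i.i.d.\ uniform on $\I T$; a Borel--Cantelli argument yields almost-sure Diophantine bounds of the form $\|\V m \cdot \V x_j\| \ge c(\V x_j) \|\V m\|_\infty^{-k - \eta}$ for arbitrary $\eta > 0$, and moment estimates on $\prod_j \min(2^i, 1/\|\V m\cdot\V x_j\|)$ exploit the independence of the $\V x_j$. Partitioning $\Sigma_i$ by dyadic shells of $\V m$ and of each $\|\V m\cdot \V x_j\|\cdot 2^i$, and taking expectations using these moment estimates, one obtains $\mathbb{E}[\Sigma_i] = O(2^{(d-2)i})$ once $d$ exceeds a threshold $d(X)$; a second Borel--Cantelli over $i$ upgrades the expectation bound to the almost-sure uniform statement.

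The main obstacle is the calibration of the threshold $d(X)$ and the dyadic balancing it requires. Relative to the trivial bound $\Sigma_i \le 2^{di}\sum|\hat{a}_{\V m}|$ one must save two full factors of $2^i$; these savings come from the joint effect of the Fourier-tail decay at scale $M$ (governed by $\gamma$), the polynomial Diophantine lower bounds on $\|\V m \cdot \V x_j\|$, and the $d$-fold independence that suppresses the probability of multiple $\V x_j$ simultaneously landing near zero modulo the frequency $\V m$. Only when $d$ is large enough for the Fourier decay to dominate the aggregate Diophantine tail in each dyadic shell do these three effects balance to $O(2^{(d-2)i})$, which is why the threshold $d(X)$ depends on $\gamma$, and hence on $X$. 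The remaining steps (dyadic bookkeeping, moment calculations, summability of the Borel--Cantelli tails) are technical but routine and follow the template established in Laczkovich~\cite{laczkovich:92b}.
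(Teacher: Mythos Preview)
The paper proceeds quite differently: it does not Fourier-expand $\mathbbm 1_X$ directly. Instead it applies the Erd\H os--Tur\'an--Koksma inequality to the point set $\{\V u+\sum_j n_j\V x_j:\V n\in Q\}$ to show that, almost surely in $\V x_1,\dots,\V x_d$, every \emph{box} $Y\subseteq\I T^k$ satisfies the polylogarithmic bound $\big||Y_{\V u}\cap Q|-\lambda(Y)|Q|\big|\le C'\log^{k+d+1}N$ for all $\V u$ and all $N$-cubes $Q$. It then approximates $X$ geometrically by an $\eps$-regular grid of boxes (a result of Niederreiter--Wills): the box-dimension hypothesis bounds the number of grid boxes meeting $\partial X$ by $\eps^{\alpha-k}$, the interior boxes are merged into $O(\eps^{1-k}+\eps^{\alpha-k})$ axis-parallel rectangles, and summing the box discrepancies yields the claim with threshold $\alpha d(X)/k>2$, where $\alpha<k-\Delta(\partial X)$.

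Your direct approach has a genuine gap: $\Sigma_i$ is infinite as written. For an indicator with $\Delta(\partial X)<k$ the coefficients $\hat a_{\V m}$ are not in $\ell^1$ --- your own shell estimate gives $\sum_{\|\V m\|_\infty\sim 2^s}|\hat a_{\V m}|=O(2^{s(k-\gamma)/2})$, which diverges as $s\to\infty$ whenever $\gamma<k$ --- while the kernel $\prod_j\min\bigl(2^i,(2\|\V m\cdot\V x_j\|)^{-1}\bigr)$ does not decay in $\V m$: your Diophantine bound $\|\V m\cdot\V x_j\|\gtrsim\|\V m\|^{-k-\eta}$ only yields the upper bound $\lesssim\|\V m\|^{d(k+\eta)}$ on the kernel, which \emph{grows} with $\|\V m\|$ until it hits the cap $2^{id}$. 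The expectation makes this transparent: since $\V m\cdot\V x_j$ is uniform on $\I T$ for each fixed $\V m\ne\V 0$, one computes $\mathbb E\bigl[\min(2^i,(2\|\V m\cdot\V x_j\|)^{-1})\bigr]=1+i\log 2$ independently of $\V m$, whence by independence $\mathbb E[\Sigma_i]=(1+i\log 2)^d\sum_{\V m\ne\V 0}|\hat a_{\V m}|=\infty$, contrary to your claim that $\mathbb E[\Sigma_i]=O(2^{(d-2)i})$. There is also the prior issue that writing $|X_{\V u}\cap Q|$ as a Fourier sum presupposes pointwise convergence of the series for $\mathbbm 1_X$, which fails in general and which you would need for \emph{every} $\V u\in\I T^k$. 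Repairing both problems forces you to truncate the Fourier sum at some height and control the tail via the boundary geometry of $X$ --- which is precisely what the Erd\H os--Tur\'an--Koksma step together with the box approximation accomplishes in the paper's argument.
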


\begin{proof}[Sketch of Proof.] By a \emph{box} in $\I T^k$ we mean a product of $k$ 
sub-intervals of $[0,1)$. Let $d\ge 1$ be arbitrary and let $\V x_1,\dots,\V x_d\in \I T^k$ be random.
By applying the Erd\H os-Tur\'an-Koksma inequality
, 
one can show that, with probability $1$, there is $C'=C'(\V x_1,\dots,\V x_d)$ such that,
for every box $Y\subseteq \I T^k$, $\V u\in\I T^k$, and $N$-cube $Q\subseteq \I Z^d$, we have that
 \beq{eq:ETK}
  \big|\, |Y_{\V u}\cap Q|-\lambda(Y)\, |Q|\,\big| \le \Upsilon(N):=C'\log^{k+d+1} N,
   \eeq
   see \cite[Lemma 2]{laczkovich:92b}.
 In other words, boxes have very small discrepancy with respect to
arbitrary cubes. (In particular, each box is $\Upsilon$-uniform.)

So, assume that (\ref{eq:ETK}) holds and that $\V x_1,\dots,\V x_d$ are free. 
Fix a real $\alpha\in(0,1]$ satisfying $\Delta(\partial X)<k-\alpha$. 
A result of Niederreiter and Wills~\cite[Kollorar~4]{niederreiter+wills:75} implies that the set $X$
is $\Psi$-uniform (with respect to $\V x_1,\dots,\V x_d$) for some $\Psi(N)$ that grows
as $O(\Upsilon(N)^{\alpha /k} N^{d-\alpha d/k})$ as $N\to\infty$.
In particular,
we can satisfy Lemma~\ref{lm:LX} by letting $d(X)$ be any integer such that $\alpha d(X)/k>2$. 
We refer the reader to~\cite[Page~62]{laczkovich:92b} for further details.

Let us also outline the ideas behind~\cite[Kollorar~4]{niederreiter+wills:75} 
in order to
show how the box dimension of $\partial X$ comes into play. 
The definition of $\alpha$ implies that the measure of points within 
$L^\infty$-distance $\eps$ from the boundary of $X$ is at most $\eps^\alpha$ for all small $\eps>0$. 
Let $N$ be large and let $\eps:=\lfloor (N^d/\Upsilon(N))^{1/k}\rfloor^{-1}$. Partition $\I T^k$ into a grid of boxes which is \emph{$\eps$-regular}, meaning 
that side lengths
are all equal to $\eps$.
Let $\C B$ consist of those boxes that intersect $\partial X$. 
By the definition of~$\alpha$,  we have that $|\C B|\le \eps^{\alpha}/\eps^k$.
Next, iteratively merge any two boxes in the interior of
$X$ if they have the same projection on the first $k-1$ coordinates
and share a $(k-1)$-dimensional face. Let $\C I$ be the set of
the final boxes in the interior of $X$. Figure~\ref{fg:circle} illustrates the special case when $X$ is a disk. 
The size of $\C I$ is at most $\e^{-k+1}$ (the number of possible projections)
plus $|\C B|$ (as each box in $\C B$ can ``prevent'' at most one merging). 

\begin{figure}[t]
\begin{center}
\includegraphics[height=3.7cm]{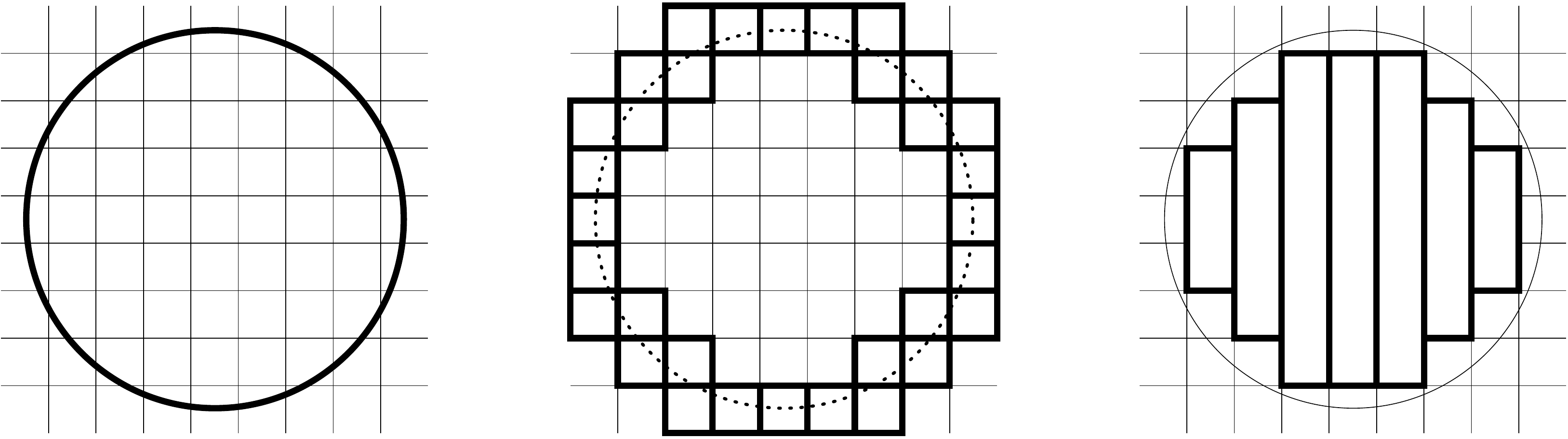}
\end{center}
\caption{\quad i) Circle $\partial X$ and $\eps$-regular grid;\quad ii) boxes in $\C B$;\quad iii) boxes in $\C I$}\label{fg:circle}
\end{figure}

Pick any $\V u\in\I T^k$ and an $N$-cube 
$Q\subseteq \I Z^d$. 
We take the dual point of view where
we fix $Q':=\{\V u+\sum_{j=1}^d n_j \V x_j\mid \V n\in Q\}\subseteq \I T^k$
and measure its discrepancy with respect to boxes. Namely, we have by~(\ref{eq:ETK}) that 
$D'(Y)\le \Upsilon(N)$  for every box $Y\subseteq \I T^k$, where we define $D'(Y):=\big|\,|Q'\cap Y|-\lambda(Y)\, N^d\,\big|$. 
This implies that 
 $$
  D'(X)\le \sum_{Y\in\C I} D'(Y)+\sum_{Y\in\C B} D'(Y\cap X) \le |\C I|\cdot\Upsilon(N)+|\C B|\, (\eps^kN^d+\Upsilon(N)),
   $$
 giving the stated upper bound after routine simplifications.\end{proof}

Thus Lemma~\ref{lm:LX} shows that the uniformity assumption of Part~\ref{it:Baire} of Theorem~\ref{th:Suff}
can be satisfied if the sets $A$ are $B$ are as in Theorem~\ref{th:Main}. The lemma also suffices for Part~\ref{it:Lebesgue} of 
Theorem~\ref{th:Suff}, thus leading to the proof of Theorem~\ref{th:Main} 
 as follows.

\begin{proof}[Proof of Theorem~\ref{th:Main}.] Observe that the assumption 
$\Delta(\partial X)<k$ implies that $X\subseteq \I T^k$ is both Baire and 
Lebesgue
measurable. For example, let us argue that $X$ is Baire. Every set is the union
of its interior (an open set) and a subset of its boundary. So it 
is enough
to show that $\partial X$ is nowhere dense. Take any ball $U\subseteq\I T^k$ of 
radius $r>0$. As $\e\to0$, the $\e$-neighbourhood of $\partial X$ has 
measure at most $\e^{\alpha}$ for some constant $\alpha>0$. This is 
strictly smaller
than $((r-\e)/r)^k\,\lambda(U)$, the volume of  the ball $U'$ concentric to $U$ of
radius $r-\e$, so at least one point $\V x\in U'$ is uncovered. The open ball 
of radius $\e$ around $\V x$ lies entirely inside $U$ and avoids $\partial X$. 
Thus $\partial X$ is 
nowhere dense, as desired.

Therefore, the sets $A$ and $B$ in Theorem~\ref{th:Main} are both Baire and 
Lebesgue measurable. Next, let us show that we can satisfy the uniformity 
assumption of Part~\ref{it:Lebesgue} of Theorem~\ref{th:Suff}.\

Let 
$d:=\max(d(A),d(B))+1$, where $d(X)$ is the function 
provided by Lemma~\ref{lm:LX}. Fix  free vectors $\V x_1,\dots,\V x_d\in \I T^k$ such that
every $(d-1)$-tuple of them satisfies the conclusion of Lemma~\ref{lm:LX} for both $A$ and $B$. Such vectors
exist since the desired properties hold with probability $1$ if we sample 
the vectors $\V x_j$  independently.
Let  $C<\infty$ be the maximum,
over all choices of $X\in \{A,B\}$ and integers
$1\le i_1<\dots<i_{d-1}\le d$,
of the corresponding constants 
$C(X;\V x_{i_1},\dots,\V x_{i_{d-1}})$. Then the assumptions of 
Part~\ref{it:Lebesgue} of
Theorem~\ref{th:Suff} hold with 
$\Psi(2^i):=C\cdot 2^{(d-3)i}$ (and the same function works with
Part~\ref{it:Baire}).

Theorem~\ref{th:Suff} implies that $A$ and $B$ are
equivalent with Baire (resp.\ Lebes\-gue) measurable pieces. Allowing empty 
pieces, let this be witnessed respectively by 
partitions $A=\cup_{\V v\in \C V} A_{\V v}'$
and $A=\cup_{\V v\in \C V} A_{\V v}''$ for some finite 
$\C V\subseteq \I T^k$, where the pieces $A_{\V v}'$ and $A_{\V v}''$ are 
translated
by~$\V v$. 
These equidecompositions
can be ``merged'' as follows. Take a nullset $X\subseteq \I T^k$ such that
$\I T^k\setminus X$ is meagre; the existence of  $X$ follows from
e.g.~\cite[Theorem~1.6]{oxtoby:mc}. We can 
additionally assume that $X$ is invariant
under all translations from~$\C V$. (For example, 
take
the union of all translates of $X$ by integer combinations of the vectors
from $\C V$; it is still a nullset since we take countably many translates.)
Now, we combine the  
Baire partition of $A$ restricted to $X$ with the Lebesgue partition restricted to 
$\I T^k\setminus X$. Specifically, let $A_{\V v}:=(A_{\V v}'\cap X)\cup 
(A_{\V v}''\setminus X)$ for $\V v\in \C V$. Clearly, these 
sets partition $A$ while, by the invariance of $X$, the corresponding 
translates $A_{\V v}+\V v$, for $\V v\in 
\C V$, partition $B$. Also, each part $A_{\V v}$ is both 
Baire and Lebesgue measurable. This proves 
Theorem~\ref{th:Main}.\end{proof}

\section{Some common definitions and results}\label{common}

Our proofs of Parts 1 and 2 of Theorem~\ref{th:Suff} proceed 
somewhat differently. This section collects some definitions and auxiliary 
results that are common to both parts.
Here, let \emph{measurable} mean Baire or Lebesgue measurable, depending on
which $\sigma$-algebra we are interested in.

Since we will study equidecompositions from graph-theoretic point of view, we 
find it convenient to adopt some notions of graph theory to our purposes 
as follows. 

By a \emph{bipartite graph} we mean a triple $G=(V_1,V_2,E)$, where  $V_1$ and 
$V_2$
are (finite or infinite) \emph{vertex sets} and $E\subseteq V_1\times V_2$ is a 
set of \emph{edges}. (Note that
$E$ consists of ordered pairs to avoid ambiguities when $V_1$ and $V_2$ are not 
disjoint.) The subgraph \emph{induced} by sets $X_1$ and $X_2$ is
 \begin{equation}\label{eq:induced}
 G[X_1,X_2]:=\big(\,V_1\cap X_1,\,V_2\cap X_2,\,E\cap (X_1\times X_2)\,\big).
 \end{equation}
A \emph{matching} in $G$ is a subset $\C M$ of $E$ which gives a partial 
injection from
$V_1$ to $V_2$ (that is, if $(a,b)$ and $(a',b')$ are distinct pairs in
$\C M$ then $a\not=a'$ and $b\not=b'$). In fact, we will identify a matching 
with
the corresponding partial injection. In particular, the
sets of matched points in $V_1$ and $V_2$ can be respectively denoted by $\C 
M^{-1}(V_2)$
and $\C M(V_1)$. A matching $\C M$ is \emph{perfect} if it is a bijection
from $V_1$ to $V_2$ (that is, if $\C M(V_1)=V_2$ and $\C M^{-1}(V_2)=V_1$).  
For a set $X$ lying in one part of $G$, let its \emph{neighbourhood} $\Gamma(X)$
consist of those vertices in the other part that are connected by 
at least one edge to $X$. (In the functional notation, we have 
$\Gamma(X)=E(X)$ for 
$X\subseteq A$ and
$\Gamma(X)=E^{-1}(X)$ for $X\subseteq B$.) If $G$ is \emph{locally finite}
(that is, every \emph{degree} $|\Gamma(\{x\})|$ is finite), then Rado's 
theorem~\cite{rado:49} states that $G$ has a perfect matching if and only if
 \begin{equation}\label{eq:Rado}
 |\Gamma(X)|\ge |X|,\quad\mbox{for every finite subset $X$ of $A$ or 
$B$}.  
 \end{equation}
 Note that 
if $V_1\cap V_2=\emptyset$ then we get the standard notions of graph theory  
with respect to the corresponding 
undirected graph on $V_1\cup V_2$.

Thus, an equidecomposition between $A,B\subseteq \I T^k$ where all translations
are restricted to the set $\VV{\V x_1,\dots,\V 
x_d}{\MD}$ that was defined in~\eqref{eq:TranslationSet} is nothing else than a 
perfect matching in the bipartite graph
 \begin{equation}\label{eq:CG}
  \C G:=(A,B,E),
 \end{equation}
 where $E$ consists of all pairs $(\V a,\V b)\in A\times B$ with 
 $\V 
b-\V a\in \VV{\V x_1,\dots,\V x_d}{\MD}$. 

Assume from now on that both $A$ and $B$ are measurable (which will be the case in
all applications). Then, each of the vertex parts of the graph $\C G$
is additionally endowed with the $\sigma$-algebra of measurable sets;
objects of this type appear in orbit equivalence~\cite{kechris+miller:toe},
limits of sparse graphs~\cite{lovasz:lngl}, and other areas.
A matching $\C M$ in $\C G$ is called 
\emph{measurable} if the set $\{\V a\in 
A\mid \C M(\V a)-\V 
a=\V v\}$ is measurable for each $\V v\in\VV{\V x_1,\dots,\V x_d}{\MD}$.

Also, we will consider the subgraphs of $\C G$ induced 
by cosets, viewing these as graphs on subsets of $\I Z^d$. 
Namely, for $\V u\in \I T^k$, consider the bipartite graph $\C G_{\V u}:=(A_{\V 
u},B_{\V u},E_{\V u})$, where
 $$
 E_{\V u}:=\big\{\, (\V a,\V b)\in A_{\V u}\times B_{\V u} \mid 
 \|\V a-\V b\|_\infty
 \le \MD\,\big\}.
 $$
 Again, a bijection $\C M_{\V u}:A_{\V u}\to B_{\V u}$ as in (\ref{eq:PhiU}) is 
nothing
else than a perfect matching in $\C G_{\V u}$ and, in order to prove
Theorem~\ref{th:LSuff}, it is enough to show that each $\C G_{\V u}$ has at 
least one perfect matching. 
For the proof of Theorem~\ref{th:Suff}, we will also need that the dependence on 
$\V u$ is ``equivariant''
and  ``measurable'' in the following sense.

Namely, we call the family $(\C M_{\V u})_{\V u\in\I T^k}$ with $\C M_{\V u}$ 
being a  matching in $\C G_{\V u}$ \emph{equivariant} if, for all $\V u\in \I 
T^k$ and $\V n\in\I Z^d$, we have
 \beq{eq:Equivariance}
 \C M_{\V u+n_1\V x_1+\dots+n_d\V x_d}=
 \{(\V a-\V n,\V b-\V n)\mid (\V a,\V b)\in \C M_{\V u}\}.
 \eeq

Note that, if  (\ref{eq:Equivariance}) holds, then we can define a partial 
injection $\C M:A\to B$ as follows.
In order to
find the image $\C M(\V a)$ of $\V a\in A$, take any $\V u$ such that $\V a$ is 
in the coset of $\V u$,
say $\V a=\V u+\sum_{j=1}^d n_j \V x_j$ with $\V n\in\I Z^d$. Note that $\V 
n\in 
A_{\V u}$. If $\V n$ is not matched by $\C M_{\V u}$, then let $\C M(\V a)$ be 
undefined; otherwise let $\C M(\V a):=\V u+\sum_{j=1}^d m_j \V x_j\in B$, where 
$\V m:=\C M_{\V u}(\V n)$.
 It is easy to see that, by (\ref{eq:Equivariance}), the definition of $\C M(\V 
a)$ does not depend on the choice of~$\V u$ (and it will often be 
notationally convenient to take $\V u=\V a$).

The concept of equivariance can be applied to other kinds of objects, with the 
definition 
being the obvious adaptation of (\ref{eq:Equivariance}) in all cases that we 
will encounter. Namely,
the ``meta-definition'' is that if we shift the coset reference point from $\V 
u$ to $\V u+n_1\V x_1+\dots+n_d\V x_d$ 
for some $\V n\in \I Z^d$, then the object does not change, i.e.\ its new 
coordinates are
all shifted by $-\V n$. 
For example, for every $X\subseteq
\I T^k$ the
family of sets $(X_{\V u})_{\V u\in\I T^k}$ is equivariant and, conversely, 
every equivariant family of
subsets of $\I Z^d$ gives a subset of $\I T^k$. As another example, the family 
$(\C G_{\V u})_{\V u\in\I T^k}$
is equivariant and corresponds to the bipartite graph $\C G$ defined 
in~\eqref{eq:CG}.

We call an equivariant family $(\C M_{\V u})_{\V u\in\I T^k}$ with $\C M_{\V u}$ 
being a 
(not necessarily perfect) matching in $\C G_{\V u}$ \emph{measurable} if the
natural encoding of the corresponding matching $\C M$ by a function $\I T^k\to [-\MD,\MD]^d\cup\{\mbox{\sc 
unmatched}\}$ is measurable. (Note that this is equivalent to the 
measurability of $\C M$ as defined 
after~\eqref{eq:CG}.)
\comment{(For example, to see the converse implication
note that, for $\V n\in [-\MD,\MD]^d$, the set $g^{-1}(\V n)$ consists of those 
$\V a\in \C M^{-1}(B)$ for which
$\C M(\V a)=\V a+\sum_{j=1}^d n_j\V x_j$.)}%
  Again, this concept can be applied to other objects: 
for example, an equivariant family $(X_{\V u})_{\V u\in\I T^k}$ of subsets of 
$\I Z^d$ is called \emph{measurable} if the corresponding encoding $\I 
T^k\to\{0,1\}$ (i.e. the corresponding set $X\subseteq \I T^k$)
is measurable.

Thus, if we can find an equivariant and measurable family $(\C M_{\V u})_{\V 
u\in\I T^k}$ with
$\C M_{\V u}$ being a perfect matching
in $\C G_{\V u}$ for each $\V u\in\I T^k$, then we have a measurable bijection 
$\C M:A\to B$. Furthermore, the differences
$\C M(\V a)-\V a$ for $\V a\in A$ are all restricted to the finite set 
$\VV{\V x_1,\dots,\V x_d}{\MD}$, giving the required measurable equivalence 
$A\simTr B$.

Thus, informally speaking, each element $\V n\in A_{\V u}$ has to find its 
match 
$\C M_{\V u}(\V n)$ in a measurable way which is also invariant under shifting 
the whole coset by any integer vector.  For 
example, 
a measurable inclusion-maximal matching $\C M$ between $A$ and $B$ can be 
constructed by 
iteratively applying
the following over all $\V v\in \VV{\V x_1,\dots,\V x_d}{\MD}$: 
add to the current matching $\C M$ all possible pairs  $(\V a,\V a+\V v)$, 
i.e.\ for all $\V a$ in the set 
 $$
 X:= \big(A\setminus \C M^{-1}(B)\big)\cap \big((B\setminus\C M(A))-\V v\big).
 $$ 
 Clearly, $X$ is measurable if $\C M$ is; thus one iteration preserves
the measurability of $\C M$. Also, each of the 
above iterations can be determined by a ``local'' rule 
within a coset: namely, the 
match of a vertex $\V n\in A_{\V u}$ depends only on the current picture 
inside the ball of radius $\MD$ around $\V n$ (while the new
values of $\C M$ can be determined in parallel).

Let us formalise the above idea. For $r\in\I N$, a \emph{radius-$r$ local rule} 
(or simply an \emph{$r$-local rule}) is a function
$\C R:\I N^{Q_r}\to \I N$, where $Q_r:=[-r,r]^d\subseteq \I Z^d$; it instructs 
how to transform any
function $g:\I T^k\to\I N$ into another function $g^{\C R}:\I T^k\to\I N$. 
Namely, for $\V u\in\I T^k$, 
we define 
$$
 g^{\C R}(\V u):=\C R(g_{\V u}|_{Q_r}),
  $$
   where $g_{\V u}:\I Z^d\to\I N$ is the coset version of $g$ (i.e.\
$g_{\V u}(\V n):=g(\V u+\sum_{j=1}^d n_j\V x_j)$ for
$\V n\in\I Z^d$)  and $g_{\V u}|_{Q_r}:Q_r\to \I N$ denotes its restriction
to  the cube $Q_r$.

\begin{lemma}\label{lm:local}  If $g:\I T^k\to\I N$ is a measurable function, 
then, for 
any $r$-local rule $\C R$, 
the function $g^{\C R}:\I T^k\to\I N$ is measurable.\end{lemma}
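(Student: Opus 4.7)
The plan is to reduce measurability of $g^{\C R}$ to measurability of preimages of singletons and then express each such preimage in terms of translates of preimages of $g$, exploiting the fact that the domain of the local rule is countable.

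Concretely, fix $m\in\I N$; I would show that $(g^{\C R})^{-1}(m)$ is measurable. Let $S_m:=\C R^{-1}(m)\subseteq \I N^{Q_r}$. Since $Q_r=[-r,r]^d$ is finite and $\I N$ is countable, the set $\I N^{Q_r}$ of all functions $Q_r\to \I N$ is countable, and in particular $S_m$ is countable. From the definition of $g^{\C R}$ one has
\[
(g^{\C R})^{-1}(m)=\{\V u\in\I T^k\mid g_{\V u}|_{Q_r}\in S_m\}=\bigcup_{f\in S_m}\{\V u\in\I T^k\mid g_{\V u}|_{Q_r}=f\},
\]
so it suffices to prove each individual set $U_f:=\{\V u\in\I T^k\mid g_{\V u}|_{Q_r}=f\}$ is measurable and then take a countable union.

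To handle $U_f$, I would unfold the restriction coordinate-by-coordinate: by definition $g_{\V u}(\V n)=g(\V u+\sum_{j=1}^d n_j\V x_j)$, so
\[
U_f=\bigcap_{\V n\in Q_r}\Big\{\V u\in \I T^k\,\Big|\,g\big(\V u+\textstyle\sum_{j=1}^d n_j\V x_j\big)=f(\V n)\Big\}=\bigcap_{\V n\in Q_r}\big(g^{-1}(f(\V n))-\textstyle\sum_{j=1}^d n_j\V x_j\big).
\]
Each $g^{-1}(f(\V n))$ is measurable by hypothesis on $g$, and both the Baire and the Lebesgue $\sigma$-algebras on $\I T^k$ are translation-invariant, so each translate is measurable; the intersection is over the finite set $Q_r$, hence measurable.

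There is no real obstacle here: the only mild point to note is the countability of $\I N^{Q_r}$, which is essential because we need to write $(g^{\C R})^{-1}(m)$ as a \emph{countable} union. Combining the three observations above, $(g^{\C R})^{-1}(m)$ is measurable for every $m\in\I N$, and since $\I N$ is countable this gives measurability of $g^{\C R}$, completing the proof.
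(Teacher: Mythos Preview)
Your proof is correct and essentially identical to the paper's own argument: both define the sets $U_f=\{\V u\mid g_{\V u}|_{Q_r}=f\}$ (the paper calls them $X_f$), express each as a finite intersection of translates of preimages of $g$, and then write $(g^{\C R})^{-1}(m)$ as a countable union of such sets. The only cosmetic difference is that the paper states the countability of $\I N^{Q_r}$ at the very end rather than up front.
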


 \begin{proof}
 For any function
$f:Q_r\to \I N$ define  
 $$
 X_f:=\{\V u\in\I T^k\mid g_{\V u}|_{Q_r}=f\}. 
 $$
 Thus $\V u\in X_{f}$ if and only if $g(\V u+\sum_{j=1}^d n_j\V x_j)=f(\V n)$
for every $\V n\in Q_r$. This means that $X_{f}$ is the intersection,  over $\V 
n\in Q_r$, of the translates of $g^{-1}(f(\V n))\subseteq\I T^k$ by the vector 
$-\!\sum_{j=1}^d n_j\V x_j$. Each of these translates is a measurable set
by the measurability of $g:\I T^k\to\I N$. 

Furthermore, the pre-image of any $i\in\I N$ 
under $g^{\C R}$ is the disjoint union of $X_f$ over $f$ with $\C R(f)=i$. 
This union is measurable as there are only countably many possible 
functions~$f$.\end{proof}

To
avoid confusion when we have different graphs on $\I Z^d$, the  
\emph{distance} between $\V x,\V y\in \I Z^d$ will always mean the 
$L^\infty$-distance between vectors: 
 $$
  \dist(\V x,\V y)=\|\V x-\V 
y\|_\infty.
 $$
 Also, we use the standard definition of the distance
between sets: 
  \begin{equation}\label{eq:SetDistance}
  \dist(X,Y):=\min\{\dist(\V x,\V y)\mid 
\V x\in X,\ \V y\in Y\},\quad X,Y\subseteq \I Z^d.
 \end{equation}
 For $X\subseteq \I 
Z^k$ and $m\in \I N$, we define the \emph{$m$-ball around $X$} to be
 $$
 \dist_{\le m}(X):=\{\V n\in\I Z^d\mid \dist(\V n,X)\le m\}.
 $$
A collection $\C X$ of elements or subsets of $\I Z^d$ is \emph{$r$-sparse} 
if the distance between any two distinct members of $\C X$ is strictly larger
than~$r$. A set $X\subseteq \I T^k$ is called \emph{$r$-sparse} if 
$X_{\V u}\subseteq \I Z^d$ is $r$-sparse for each $\V u\in\I T^k$.

\begin{lemma}\label{lm:KST} For every $r$ there is a Borel measurable
map $\chi:\I T^k\to [t]$ for some $t\in\I N$ such that each pre-image 
$\chi^{-1}(i)\subseteq \I T^k$, $i\in [t]$, is $r$-sparse.
\end{lemma}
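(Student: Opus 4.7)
The plan is to reformulate the conclusion as a Borel chromatic number problem and apply the classical Kechris--Solecki--Todorcevic theorem, which is presumably the source suggested by the label of the lemma. Define the finite set of ``forbidden translations''
$$
S := \Big\{\,{\textstyle\sum_{j=1}^{d} n_j \V x_j}\ \Big|\ \V n \in Q_r\setminus\{\V 0\}\,\Big\}\ \subseteq\ \I T^k,
$$
which by the freeness of $\V x_1,\dots,\V x_d$ is a finite collection of nonzero elements of $\I T^k$. Let $H$ be the Borel graph on $\I T^k$ with edge set $\{\{\V u,\V u+\V v\}\mid \V u\in\I T^k,\ \V v\in S\}$; it has bounded degree $|S|=(2r+1)^d-1$ and is generated by the finite family of Borel homeomorphisms $\V u\mapsto\V u+\V v$, $\V v\in S$.

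The first step is to verify that $H$-independent sets are precisely the $r$-sparse subsets of $\I T^k$ in the sense of the statement. Indeed, if $X\subseteq \I T^k$ is $H$-independent and $\V n,\V m\in X_{\V u}$ were distinct with $\|\V n-\V m\|_\infty\le r$, then $\V p:=\V n-\V m\in Q_r\setminus\{\V 0\}$, and the two points $\V u+\sum m_j\V x_j$ and $\V u+\sum n_j\V x_j$ of $X$ would differ by $\sum p_j\V x_j\in S$, contradicting independence. Conversely, any $H$-edge inside $X$ immediately yields two elements of some $X_{\V u}$ at $L^\infty$-distance at most $r$. Thus it suffices to produce a Borel proper colouring $\chi\colon\I T^k\to[t]$ of $H$ with $t$ finite.

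For this I would invoke the theorem of Kechris, Solecki and Todorcevic asserting that a Borel graph on a Polish space generated by finitely many Borel automorphisms $T_1,\dots,T_n$ has Borel chromatic number at most $2n+1$. Applied to $H$ this yields the desired $\chi$ with $t\le 2|S|+1$, whose fibres are $r$-sparse Borel subsets of $\I T^k$, as required. I anticipate no genuine obstacle; the only point that needs care is the dictionary between the coset-based definition of $r$-sparsity and graph-theoretic independence in $H$, which is precisely the computation carried out above.
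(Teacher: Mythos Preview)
Your proof is correct and is precisely the argument the paper alludes to when it says that the lemma ``follows from the more general results of Kechris, Solecki and Todorcevic''; you have simply unpacked that citation by setting up the Borel graph $H$ and checking that $H$-independence coincides with $r$-sparseness.

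The paper, however, also supplies an elementary alternative that bypasses the KST machinery entirely: since $\V x_1,\dots,\V x_d$ are free, the finite set $\{\sum_j n_j\V x_j : \V n\in Q_r\}$ has a positive minimum pairwise distance in $\I T^k$; choosing $n$ so that $1/n$ is below this minimum, the half-open boxes of the $(1/n)$-regular grid on $\I T^k$ are automatically $r$-sparse, and one takes $\chi$ to be the indicator of which box a point lies in (so $t=n^k$). Your route gives a sharper bound on $t$ in terms of $|S|$ and works verbatim in any standard Borel space acted on by finitely many Borel automorphisms, while the paper's grid construction is completely explicit, self-contained, and exploits the concrete metric structure of $\I T^k$.
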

 \begin{proof} The existence of $\chi$ follows from the more general 
results of Kechris, Solecki and  
Todorcevic~\cite{kechris+solecki+todorcevic:99}. 

Alternatively, 
pick $n\in\I N$ such that $1/n$ is smaller than the minimum distance 
inside the finite 
set 
$\{\,\sum_{j=1}^d n_j\V x_j\mid \V n\in\nolinebreak\I Z^d,\ \|\V n\|_\infty\le\nolinebreak r\,\}
\subseteq\I T^k$. Then any subset of $\I T^k$ of 
diameter at most $1/n$ is $r$-sparse. Thus we can take for $\chi$ 
any function that has the 
half-open boxes of the $(1/n)$-regular grid on $\I T^k$ as its pre-images (where
$t=n^k$).\end{proof}

For 
$X\subseteq \I Z^d$,
its \emph{boundary} $\partial X$ is the set of ordered pairs $(\V m,\V n)$ such 
that $\V m\in X$, $\V n\in \I Z^d\setminus X$ and the vector $\V n-\V m$ has 
zero entries except one entry equal to $\pm1$ (i.e.\
$\V n-\V m=\pm\V e_j$ for a standard basis vector $\V e_j$). The 
\emph{perimeter} of $X$ is $p(X):=|\partial X|$. In other words, the perimeter 
of $X$ is the number of edges leaving $X$ in 
the standard $2d$-regular graph on $\I Z^d$. 

We will need a lower bound on the perimeter of a finite set 
$X\subseteq \I Z^d$ in terms of its size. While
the exact solution to this edge-isoperimetric problem is known (see Ahlswede 
and 
Bezrukov~\cite[Theorem~2]{ahlswede+bezrukov:95}), we find it more convenient to
use the old result of Loomis and Whitney~\cite{loomis+whitney:49} that gives a 
bound which is easy to state and suffices for our purposes.

\begin{lemma}\label{lm:Isop} For every finite $X\subseteq \I Z^d$ we have 
$p(X)\ge 2d\cdot |X|^{(d-1)/d}$.
\end{lemma}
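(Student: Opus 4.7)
The plan is to reduce the problem to the discrete Loomis--Whitney projection inequality and a one-dimensional perimeter count, then combine the two via the arithmetic-geometric mean inequality.

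For $j\in[d]$ let $\pi_j:\I Z^d\to \I Z^{d-1}$ denote the projection that forgets the $j$-th coordinate, and let $p_j(X)$ denote the number of ordered pairs $(\V m,\V n)\in\partial X$ with $\V n-\V m=\pm \V e_j$, so that $p(X)=\sum_{j=1}^{d} p_j(X)$. The first step is to observe that for each $\V q\in\pi_j(X)$ the fibre $\pi_j^{-1}(\V q)\cap X$ is a non-empty finite subset of a copy of $\I Z$ (the $j$-th coordinate axis), and any non-empty finite subset $S\subseteq\I Z$ contributes at least two boundary edges in the one-dimensional lattice graph on $\I Z$ (one at $\min S$ and one at $\max S$; equality holds iff $S$ is a discrete interval). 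Summing this over fibres gives
\[
p_j(X)\ \ge\ 2\,|\pi_j(X)|,\qquad j=1,\dots,d.
\]

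The second step is the classical (discrete) Loomis--Whitney inequality
\[
|X|^{d-1}\ \le\ \prod_{j=1}^{d}|\pi_j(X)|,
\]
which I would quote, or reprove quickly by induction on $d$ using a slicing argument with Cauchy--Schwarz (slice $X$ by fixing the first coordinate; Cauchy--Schwarz together with the $(d-1)$-dimensional inequality applied to each slice yields the bound in dimension $d$).

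Combining the two ingredients, the AM--GM inequality gives
\[
p(X)\ =\ \sum_{j=1}^{d} p_j(X)\ \ge\ 2\sum_{j=1}^{d}|\pi_j(X)|\ \ge\ 2d\Bigl(\prod_{j=1}^{d}|\pi_j(X)|\Bigr)^{1/d}\ \ge\ 2d\,|X|^{(d-1)/d},
\]
as required. There is no real obstacle here; the only point one has to be slightly careful about is the convention that $\partial X$ consists of \emph{ordered} pairs with the first entry inside $X$, which is precisely what makes the one-dimensional count give a clean factor of $2$ (rather than $1$) per non-empty fibre, and it is what produces the sharp constant $2d$ in the stated bound (attained by axis-aligned cubes).
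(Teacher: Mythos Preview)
Your proof is correct and follows essentially the same approach as the paper: cite the Loomis--Whitney inequality $|X|^{d-1}\le\prod_{j=1}^d |\pi_j(X)|$, observe that $p(X)\ge 2\sum_{j=1}^d|\pi_j(X)|$, and combine via AM--GM. The only difference is that you spell out the one-dimensional fibre argument justifying $p_j(X)\ge 2|\pi_j(X)|$, which the paper leaves implicit.
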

\begin{proof}  A result of Loomis and Whitney~\cite[Theorem~2]{loomis+whitney:49} directly 
implies that $|X|^{d-1}\le \prod_{j=1}^d |X_j|$, where $X_1,\dots,X_d\subseteq \I Z^{d-1}$ are all
$(d-1)$-dimensional projections of $X$. Thus, by the Geometric--Arithmetic Mean Inequality, we
obtain the required:
 $$
  p(X)\ge 2\sum_{j=1}^d |X_j| \ge 2d \left(\prod_{j=1}^d |X_j|\right)^{1/d}\ge 2d\cdot |X|^{(d-1)/d}.
   $$
   \end{proof}

\section{Proof of Part~\ref{it:Lebesgue} of
Theorem~\ref{th:Suff}}\label{proof}

Throughout this section, \emph{measurable} means Lebesgue measurable.

\subsection{Overview of main ideas and steps}\label{outline}

First, let us define some global constants that will be used for proving 
Part~\ref{it:Lebesgue} of Theorem~\ref{th:Suff}.
Recall that we are given  the measurable sets $A,B\subseteq \I T^k$ that are 
$\Psi$-uniform of density $\delta>0$ with respect to any $d-1$ of the vectors $\V 
x_1,\dots,\V x_d\in\I T^k$.  As we mentioned in Remark~\ref{re:uniformities},
this implies that 
$A$ and $B$ are $\Phi$-uniform with respect to $\V x_1,\dots,\V x_d\in\I T^k$.
(Recall that $\Phi(2^i):=2^i\cdot \Psi(2^i)$ for $i\in\I N$.) It easily follows
(e.g.\ from Lemma~\ref{lm:density} below) that $\lambda(A)=\lambda(B)=\delta$.

Given $A,B,\Psi,\V x_1,\dots,\V x_d$, choose a large constant $\MD$ (namely, it has to satisfy
Lemma~\ref{lm:ShortAug} below). Let $(N_i)_{i\in\I N}$ be a strictly 
increasing sequence, consisting of integer powers of $2$ 
such that $\sum_{i=0}^\infty N_i^2/N_{i+1}<\infty$.  
When some index $i$ goes to infinity, we may use asymptotic notation, such as $O(1)$, to denote 
constants that do not depend on $i$.

We will be constructing the desired measurable perfect matching in the 
bipartite graph $\C G=(A,B,E)$ that was defined by~\eqref{eq:CG} 
by iteratively improving
partial matchings. 
Namely, each Iteration~$i$ replaces the previous partial measurable 
matching $\C M_{i-1}$ by a ``better'' matching $\C M_{i}$ using finitely many 
local rules. Clearly, the new family $(\C M_{i,\V u})_{\V u\in\I T^k}$ is still 
equivariant and, by Lemma~\ref{lm:local}, measurable. We wish to find matchings 
$(\C M_i)_{i\in\I N}$ such  that for a.e.\ (almost every)
$\V a\in A$ the sequence $\C M_i(\V a)$ stabilises eventually, that is, there are $n\in\I N$ and $\V b\in B$ such that
$\C M_i(\V a)=\V b$ for all $i\ge n$. In this case, we agree that the final partial map $\C M$
maps $\V a$ to $\V b$. Equivalently, 
 \beq{eq:CM}
  \C M:=\cup_{i\in \I N} \cap_{j=i}^\infty \C M_{j},
 \eeq 
  where we view matchings in $\C G=(A,B,E)$ as subsets of $E$.
 Clearly, any family $(\C M_{\V u})_{\V u\in \I T^k}$ of matchings obtained this way is equivariant and measurable.

In order to guarantee that almost every vertex of $A$ is matched (i.e.\ that
$\lambda(\C M^{-1}(B)\setminus\nolinebreak A)=0$), it is enough to establish the following two properties:
 \begin{align}
 \lim_{i\to\infty} \lambda\!\left(\C M_{i}^{-1}(B)\right) &= \lambda(A),\label{eq:aim1}\\
 \sum_{i=0}^\infty \lambda\!\left((\C M_i\triangle \C M_{i+1})^{-1}(B)\right) & < \infty,\label{eq:aim2}
 \end{align}
 where 
$\C M_i\triangle \C M_{i+1}\subseteq E$ is the 
symmetric difference of $\C M_i$ and $\C M_{i+1}$, and thus $(\C M_i\triangle \C M_{i+1})^{-1}(B)$
is the set of those $\V a\in A$ such that $\C M_i(\V a)\not=\C M_{i+1}(\V a)$, including the cases when only
one of these is defined. 

Indeed, suppose that (\ref{eq:aim1}) and (\ref{eq:aim2}) hold. Let $A_i'$
 consist
of those vertices of $A$ whose match is modified at least once after Iteration~$i$, that is,   $A'_{i}:=\cup_{j=i}^\infty (\C M_j\triangle \C M_{j+1})^{-1}(B)$. The measure 
$\lambda(A'_{i})$ tends to~$0$ as $i\to \infty$ because it is trivially bounded by the corresponding
tail of the sum in~(\ref{eq:aim2}). Thus the set $A':=\cap_{i=0}^\infty A'_{i}$ of
vertices in $A$ that do not stabilise eventually has measure zero. Also, for 
every $i\in\I N$ we have that $\C M^{-1}(B)\supseteq \C M_i^{-1}(B)\setminus 
A'_{i}$. 
If we consider the measure of these sets and use~(\ref{eq:aim1}),
we conclude that $\lambda(\C M^{-1}(B))\ge \lambda(A)$, giving the required conclusion.

Thus, if we are successful in establishing (\ref{eq:aim1}) and (\ref{eq:aim2}), 
this gives an a.e.\ defined measurable map $\C M$, which shows that $A\setminus 
A'$ and $B\setminus B'$
are measurably equivalent, for some nullsets $A'\subseteq A$ and $B'\subseteq B$. 
It is not hard to modify $\C M$ to get rid of the exceptional sets. Namely, let 
$X\subseteq \I T^k$ be the union of all cosets that intersect $A'\cup B'$.
Note that $X$ is a 
nullset. 
Let $\C M':A\to B$ be given by Theorem~\ref{th:LSuff} using the same vectors $\V x_1,\dots,\V x_d$. Then, $\C M$ (resp.\ $\C M'$) induces a bijection $A\setminus X\to B\setminus X$ (resp.\ 
$A\cap X\to B\cap X$) 
and we can use $\C M$ on $A\setminus X$ and $\C M'$ on $A\cap X$. The obtained 
bijection $A\to B$ is measurable since $\C M'$ is applied only inside the 
nullset~$X$.

The following trivial observation will be enough in all our forthcoming estimates of the measure of ``bad''
sets. We say that a set $X\subseteq \I T^k$ (or an equivariant family $(X_{\V u})_{\V u\in\I T^k}$) 
\emph{has  uniform density at most} $c$ if there is $r\in \I N$ such that for every $\V u\in\I T^k$ and for every $r$-cube $Q\subseteq \I Z^d$ we have
$|X_{\V u}\cap Q|\le cr^d$. 

\begin{lemma}\label{lm:density} If a measurable set $X\subseteq \I T^k$ has uniform density at most $c$,
then $\lambda(X)\le c$.\end{lemma}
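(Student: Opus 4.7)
The plan is a direct application of Fubini's theorem combined with the translation invariance of Lebesgue measure on $\I T^k$. First I fix the integer $r$ provided by the hypothesis and choose one single $r$-cube $Q \subseteq \I Z^d$, say $Q := [0,r-1]^d$. The assumption then reads
$$|X_{\V u} \cap Q| \,\le\, c\, r^d \qquad \text{for every } \V u \in \I T^k.$$

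Next I rewrite the left-hand side as a sum of indicators of translates of $X$. Since $\V n \in X_{\V u}$ is equivalent to $\V u \in X - \sum_{j=1}^d n_j \V x_j$, we obtain
$$|X_{\V u} \cap Q| \,=\, \sum_{\V n \in Q} \I 1_{X^{\V n}}(\V u), \qquad \text{where}\quad X^{\V n} := X - \sum_{j=1}^d n_j \V x_j.$$
Each set $X^{\V n}$ is a translate of the measurable set $X$ in $(\I T^k,+)$, hence measurable with $\lambda(X^{\V n}) = \lambda(X)$; in particular the map $\V u \mapsto |X_{\V u} \cap Q|$ is a measurable function. Integrating over $\I T^k$ (which has total mass~$1$) and applying the pointwise bound yields
$$r^d \, \lambda(X) \,=\, \sum_{\V n \in Q} \lambda(X^{\V n}) \,=\, \int_{\I T^k} |X_{\V u} \cap Q| \, d\lambda(\V u) \,\le\, c\, r^d,$$
so $\lambda(X) \le c$, as required.

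I do not anticipate any genuine obstacle: the lemma is essentially a one-line averaging argument. Its content is the familiar principle that, because $\lambda$ is invariant under the translations by $\V x_1, \dots, \V x_d$, any uniform combinatorial upper bound on the density of $X$ along every orbit of the subgroup these vectors generate transfers automatically to an upper bound on the Lebesgue measure of $X$. It is worth remarking that a stronger hypothesis would not help: the pointwise inequality already forces the measure bound for any single valid scale~$r$, which is why the statement will be convenient for future estimates of ``bad'' sets in the iterative construction described in Section~\ref{proof}.
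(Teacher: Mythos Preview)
Your proof is correct and is essentially the same argument as the paper's: both take the $r^d$ translates of $X$ by the vectors $\sum_j n_j\V x_j$ with $\V n$ ranging over an $r$-cube, observe that the uniform density hypothesis bounds the multiplicity of the covering of $\I T^k$ by $cr^d$, and then use translation invariance of $\lambda$ to conclude. The only cosmetic differences are your explicit use of indicator functions and integration where the paper says ``finite additivity and translation invariance''.
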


\begin{proof} Let $r\in\I N$ witness the stated uniform density.
Consider $r^d$ translates $X+\sum_{j=1}^d n_j\V x_j$ over $\V n\in [r]^d$. By our 
assumption, every point of $\I T^k$ is covered at most $cr^d$ times. Thus the lemma follows from the
finite additivity and translation invariance of the Lebesgue measure~$\lambda$.\end{proof}

Since our construction of the matching $\C M_i$ involves ``improving''
$\C M_{i-1}$ along 
augmenting paths, let us give the corresponding general definitions now. 
Given a 
matching 
$\C M$ in a bipartite graph $G=(V_1,V_2,E)$, an \emph{augmenting path} is a 
sequence $P=(v_0,\dots,v_m)$ of vertices
such that $v_0\in V_1\setminus \C M^{-1}(V_2)$, $v_m\in V_2\setminus \C M(V_1)$,
$(v_{i},v_{i-1})\in \C M$ for 
all even $i\in [m]$, and $(v_{i-1},v_i)\in E\setminus\C M$ for all odd $i\in 
[m]$. In other words,
we start with an unmatched vertex of $V_1$ and alternate between edges in 
$E\setminus\C M$
and $\C M$ until we reach an unmatched vertex of $V_2$; note that all even 
(resp.\ odd) numbered vertices necessarily belong to the same part and are 
distinct. The \emph{length} of $P$ is $m$,
the number of edges in it;
clearly, it has to be odd. If we \emph{flip} the path $P$, that is, remove 
$(v_{i},v_{i-1})$ from $\C M$
for all even $i\in [m]$ and add $(v_{i-1},v_{i})$ to $\C M$ for all odd 
$i\in[m]$,
then we obtain another matching that improves $\C M$ by covering two extra 
vertices. A matching in a finite graph  is \emph{maximum} 
if it has the largest number of edges among all matchings.

As we already mentioned, we try to achieve (\ref{eq:aim1}) and (\ref{eq:aim2}) 
by iteratively flipping augmenting paths 
using some local rules.  
We have to be careful how we guide the paths since it is not a priori clear that 
if two  unmatched points from different parts are close to each other in  $\C G_{\V u}$, then there 
is a relatively short augmenting path (or any augmenting path at all). 

The following lemma gives us some control over this. A \emph{rectangle} $R\subseteq \I Z^d$ is the product
of $d$ finite intervals of integers, $R=\prod_{j=1}^d[a_j,b_j]$. 
Its \emph{side lengths} are $b_j-a_j+1$, $j\in [d]$. We say that $R$ is 
\emph{$\rho$-balanced} if the ratio of any two side lengths is at most $\rho$.

\begin{lemma}\label{lm:ShortAug} Let the assumptions of 
Part~\ref{it:Lebesgue} of Theorem~\ref{th:Suff} hold and let 
$\MD=\MD(A,B,\Psi,\V x_1,\dots,\V x_d)$ be sufficiently large. Take arbitrary 
$\V u\in\I T^k$ and a $3$-balanced rectangle $R\subseteq \I Z^d$.
If $\C M$ is a  matching in 
$\C G_{\V u}[R,R]$ (the subgraph of $\C G_{\V u}$ induced by $R$, as defined
in~\eqref{eq:induced}) 
that 
misses at least one vertex in each part, then $\C G_{\V u}[R,R]$ contains an 
augmenting 
path whose length is at most the maximum side length of $R$.
In particular, every maximum matching in $\C G_{\V u}[R,R]$ completely covers 
one part of the graph.\end{lemma}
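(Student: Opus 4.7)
The plan is a Hall-condition plus BFS argument in the bipartite graph $\C G_{\V u}[R,R]$, with the expansion fed by the $\Psi$-uniformity of $A,B$ on every $(d-1)$-subset of $\{\V x_1,\dots,\V x_d\}$. First I would establish a Hall inequality: for $\MD$ chosen large enough (in terms of the data of Theorem~\ref{th:Suff}), every nonempty proper subset $S\subseteq A_{\V u}\cap R$ satisfies $|\Gamma_{\C G_{\V u}}(S)\cap R|\ge|S|$ (and symmetrically with $A$ and $B$ swapped). Writing $S^+:=(S+Q_\MD)\cap R$ so that $\Gamma(S)\cap R=B_{\V u}\cap S^+$, the difference $|B_{\V u}\cap S^+|-|A_{\V u}\cap S^+|$ can be compared to $\delta(|S^+|-|S|)$ minus discrepancy errors: the $\Psi$-uniformity on $(d-1)$-tuples allows the discrepancies $\bigl||X\cap S^+|-\delta|S^+|\bigr|$, $X\in\{A,B\}$, to be summed slab-by-slab and controlled by $O(\Psi(\ell_d))$ times a boundary quantity of $S$ in $R$. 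Meanwhile a direct volumetric argument gives $|S^+|-|S|\ge c_d\MD\cdot p_R(S)$, where $p_R(S)$ denotes the part of the perimeter $\partial S$ lying inside $R$, and Loomis--Whitney (Lemma~\ref{lm:Isop}) forces $p_R(S)\gtrsim\min(|S|,|A_{\V u}\cap R|-|S|)^{(d-1)/d}$ for proper $S$ (applied to whichever of $S$ or $(A_{\V u}\cap R)\setminus S$ is smaller, together with the $3$-balancedness of $R$ to absorb boundary corrections). For $\MD$ large, the $\delta\MD\cdot p_R(S)$ expansion dominates the discrepancy error, yielding Hall.

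With Hall in hand, the maximum matching in $\C G_{\V u}[R,R]$ saturates one whole side, so any matching missing vertices in both parts is non-maximum and admits an augmenting path. For the length bound I would run an alternating BFS from the unmatched $\V a_0\in A_{\V u}\cap R$; the $A$-side $S_i$ reached after $i$ double-steps is trivially contained in $(\V a_0+Q_{2i\MD})\cap R$. The crucial inductive claim is a ``fill'': up to a boundary layer of thickness $O(\MD)$, $S_i$ contains every matched $A$-vertex of $A_{\V u}\cap(\V a_0+Q_{(2i-1)\MD})\cap R$. Granted this, as soon as $2i\MD\ge\ell_d$ the BFS covers $R$ and therefore absorbs any unmatched $B$-vertex, producing an augmenting path of length at most $2i+1\le\ell_d/\MD+O(1)\le\ell_d$.

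The main obstacle is the inductive fill claim underlying the length bound: showing that BFS genuinely absorbs every matched $A$-vertex inside its geometric reach, in particular handling the points whose $\C M$-matches sit near the boundary of $R$ or of the reach ball. The $3$-balancedness of $R$ is precisely what rules out pathological thin rectangles that would break density estimates, and the $\Psi$-uniformity on every $(d-1)$-tuple (strictly stronger than the $\Phi$-uniformity used in Theorem~\ref{th:LSuff}) is what supplies slab-level density control in these boundary regions. Balancing the Hall slack against the isoperimetric expansion across all scales is where most of the technical work will live.
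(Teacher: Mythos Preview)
Your high-level plan (a Hall-type expansion estimate followed by an alternating BFS) is exactly the shape of the paper's argument, but the two load-bearing steps diverge from what actually works, and the ``fill'' claim in particular is a genuine gap.

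\textbf{The length bound.} You aim only for the plain Hall inequality $|\Gamma(S)|\ge|S|$ and then hope to prove that the BFS layers $S_i$ literally fill the ball $Q_{(2i-1)\MD}(\V a_0)\cap R$ up to an $O(\MD)$-rim. You flag this as the main obstacle, and rightly so: nothing in your setup prevents $\C M$ from leaving a large cluster of unmatched $A$-vertices near $\V a_0$, and since such vertices are never added to $S_i$ (you launch BFS from a single $\V a_0$), they can block the geometric spreading you need. The paper sidesteps this completely. It proves a \emph{strong} Hall inequality with isoperimetric slack,
\[
|\Gamma(X)|\ \ge\ \min\!\Big(|X|+10\,d\,|X|^{(d-1)/d},\ \tfrac12|\CB\cap R|\Big),
\]
(Part~1 of Lemma~\ref{lm:ShortAugAll}), and feeds this into the BFS from \emph{all} unmatched $A$-vertices to get the recursion $a_{i+1}\ge a_i+10\,d\,a_i^{(d-1)/d}$, hence $a_i\gtrsim i^d$; after $O(N)$ steps the reachable set exceeds $|\CB\cap R|/2$. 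A symmetric BFS from the $B$-side then forces the two trees to meet. The additive slack $\Omega(|X|^{(d-1)/d})$, not any ball-filling, is what drives the argument; your plain Hall bound carries no such slack and cannot substitute for the fill claim.

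\textbf{The discrepancy input.} Your plan to bound $\bigl||X\cap S^+|-\delta|S^+|\bigr|$ ``slab-by-slab'' via the $(d-1)$-dimensional $\Psi$-uniformity is problematic: $S^+=(S+Q_\MD)\cap R$ is an arbitrary finite set, and slicing it into slabs yields a bound proportional to the number of slabs, not to a boundary quantity of $S$. The paper instead imports Laczkovich's discrepancy--perimeter inequality $D_\delta(X;Y)\le \CC_0\,p(Y)$ for \emph{all} finite $Y$ (equation~\eqref{eq:DAlpha}, from \cite[Theorem~1.2]{laczkovich:92}), which needs only the $d$-dimensional $\Phi$-uniformity, then applies it to grid-smoothed sets $X\subseteq X_1\subseteq X_2$ at scale $\sim\MD/2$, with Lemma~\ref{lm:boundaries} converting $p(X_i)$ to the internal perimeter $p^R(X_i)$. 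As the paper notes, Lemma~\ref{lm:ShortAug} uses only $\Phi$-uniformity; the $(d-1)$-dimensional $\Psi$-hypothesis is consumed elsewhere (the special-rectangle estimates in \S\ref{ProofReally}), not here.
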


Surprisingly, this combinatorial lemma (which, as we will see later, 
relies only on the $d$-dimensional $\Phi$-uniformity of $A$ and $B$) is quite 
difficult to prove. Although much of work needed for its
proof was already done 
by Laczkovich~\cite{laczkovich:92}, a rather long argument is still required to complete it, so
we postpone all details to \secref{ShortAug}.

Given Lemma~\ref{lm:ShortAug}, another idea that went into the proof is the following. Given a partition of $(\I T^k)_{\V u}\cong \I Z^d$ into a regular grid of
$2^j$-cubes with a maximum matching inside each cube, group the cubes $2^d$ apiece so that the
new groups form a $2^{j+1}$-regular grid. By Lemma~\ref{lm:ShortAug},
the number of unmatched 
vertices inside each $2^j$-cube $Q$ is at most $|\, |A_{\V u}\cap Q|-|B_{\V u}\cap Q|\,|$,
which is at most $2\Phi(2^j)$ by the assumptions of Theorem~\ref{th:Suff}. In particular, 
the uniform density of unmatched points tends to $0$ with $j\to\infty$, helping with (\ref{eq:aim1}).
Inside each new $2^{j+1}$-cube
$Q'$,
iteratively select and flip an augmenting path of length at most $2^{j+1}$ until none exists. 
By Lemma~\ref{lm:ShortAug}, we have a maximum matching inside $Q'$ at the end. The total number of changed 
edges is at most $2^{j+1}\cdot 2^d\cdot 2\Phi(2^j)$. 
If we iterate over all $j\in \I N$ and sum the density of these changes, we get 
 \beq{eq:Estimate}
  \sum_{j=0}^\infty \frac{2^{j+1}\cdot 2^d\cdot 2\Phi(2^j)}{(2^{j+1})^d} =4\, \sum_{j=0}^\infty 
\frac{\Phi(2^j)}{2^{(d-1)j}}.
 \eeq 
 The above sum converges by (\ref{eq:SumPsiFin}), giving  a ``coset analogue'' of 
the desired requirement (\ref{eq:aim2}).

However, it is impossible to construct a perfect partition of each
coset into cubes of the same side length $N\ge 2$ in an equivariant and measurable way (because
the $\I Z^d$-action on $\I T^k$ given by the translations by $N\V x_1,\dots,N\V x_d$ is ergodic for typical vectors $\V x_j$). We overcome this issue 
by fixing, at each Iteration $i$, some set $S_i\subseteq \I T^k$ such that the elements of $S_{i,\V u}\subseteq \I Z^d$ (called \emph{seeds}) are far apart from each other. 
Informally speaking, we view each seed $\V s\in S_{i,\V u}$ as a processor that ``controls'' its Voronoi cell;
namely, $\V s$ draws the regular grid
$\C Q_i$ consisting of $N_{i}$-cubes inside its
Voronoi cell, treating itself as the centre of the coordinate system. We obtain what looks
as an $N_i$-regular grid except possible misalignments near cell boundaries. Also, assume that ``most'' of $\I Z^d$ is already covered
by grid-like areas of $N_{i-1}$-cubes with each cube containing a maximum matching that were constructed in the previous iteration step. Now,
each $\V s\in S_{i,\V u}$ aligns these as close as possible to its $N_{i}$-grid and then uses 
Lemma~\ref{lm:ShortAug} to do incremental steps as in the previous paragraph, running them from
$j=\log_2N_{i-1}$ to $\log_2N_{i}-1$ until every $N_i$-cube that is under control of $\V s$ 
induces a maximum matching. Of course, the possible 
misalignments of the grids and boundary issues require extra technical arguments.  (This is the part where we need
the $(d-1)$-dimensional $\Psi$-uniformity.)

We hope that the above discussion will be a good guide for understanding the proof
of Part~\ref{it:Lebesgue} of Theorem~\ref{th:Suff} which we present now.

\subsection{Details of the proof}\label{ProofReally}

We will use the global constants that were defined at the beginning of \secref{outline}.

\subsubsection{Constructing the seed set $S_i$}\label{Si}

Recall that a set $S\subseteq \I T^k$ is \emph{$r$-sparse} (given the free 
vectors 
$\V x_1,\dots,\V x_d$) if for every $\V u\in\I T^k$ and
every distinct $\V m,\V n\in S_{\V u}$ we have that $\|\V m-\V n\|_\infty> r$. 

For each $i\in\I N$, we construct a Borel set $S_i\subseteq \I T^k$ which is 
maximal $N_{i+2}$-sparse,
that is, $S_i$ is $N_{i+2}$-sparse but the addition of any new element of 
$\I T^k\setminus S_i$ to it violates this property. (The maximality property 
will be useful in the proof of Lemma~\ref{lm:VCDiam} as it will guarantee that the diameter 
of Voronoi cells of  $S_{i,\V u}$ is uniformly bounded.) Take the 
$N_{i+2}$-sparse map $\chi:\I T^k\to[t]$ provided by Lemma~\ref{lm:KST}.
We construct $S_i$ by starting with the empty set and then, iteratively for 
$j\in[t]$, adding all those points 
of $\chi^{-1}(j)$ that do not violate the $N_{i+2}$-sparseness with an already 
existing element. Formally, we let $S_{i,0}:=\emptyset$ and 
 $$
  S_{i,j}:=S_{i,j-1}\cup \left(\chi^{-1}(j)\setminus 
\cup_{\V n\in\dist_{\le N_{i+2}}(\B 0)}(S_{i,j-1}+n_1\V x_1+\dots+n_d\V 
x_d)\right),
   $$
 \mbox{for $j=1,\dots,t$}. This formula shows, in particular, that the final set $S_i:=S_{i,t}$ is Borel. Also, $S_i$ is $N_{i+2}$-sparse (since 
each $\chi^{-1}(j)$ is) while the maximality of $S_i$ follows from the fact 
that each element $\V x\in \I T^k$ 
was considered for inclusion into the set $S_{i,\chi(\V x)}\subseteq 
S_i$.

\subsubsection{Constructing grid domains around seeds}

Here we construct an equivariant family $(\C Q_{i,\V u})_{\V u\in \I T^k}$ consisting of disjoint $N_i$-cubes
in $\I Z^d$ that looks as the $N_i$-regular grid in a large neighbourhood 
of each point of $S_{i,\V u}$.  This construction is similar to the one by 
Tim\'ar~\cite{timar:04,timar:09:arxiv}, except he had to cover the whole space $\I Z^d$ 
with parts that could somewhat deviate from being perfect cubes.

 Let $\V u\in\I T^k$ and $\V s\in S_{i,\V u}$. Let the
\emph{(integer) Voronoi cell} of $\V s$ be
 \beq{eq:Csu}
  C_{i,\V s,\V u}:= \big\{\,\V n\in \I Z^d\mid \forall \V s'\in S_{i,\V u}\setminus\{\V s\}\ \ \|\V n-\V s\|_\infty<\|\V n-\V s'\|_\infty\,\big\},
  \eeq
   i.e.\ the set of points
in $\I Z^d$ strictly closer to $\V s$  than to any other element of~$S_{i,\V u}$.

Since each element of $\I Z^d$ is at distance at most $N_{i+2}$
from $S_{i,\V u}$, we can ``produce'' Voronoi cells using some $N_{i+2}$-local rule $\C R$.
(Namely, we want $\C R$ to transform the characteristic function of $S_i$ into the function
whose value on every $\V u\in \I T^k$ encodes if there is $\V s\in S_{i,\V u}$ such that 
$C_{i,\V s,\V u}$ contains the origin
and, if yes,  stores such (unique)  vector $\V s$.) In particular, the corresponding structure $C_i$ 
on $\I T^k$ is measurable by Lemma~\ref{lm:local}. 

Let $\C Q_{i,\V u}$ consist of those $N_i$-cubes $Q=\prod_{j=1}^d[a_j,a_j+N_i-1]$ 
for which there is $\V s\in S_{i,\V u}$ such that $Q\subseteq C_{i,\V s,\V u}$
and all coordinates of the vector $\V a-\V s$ are divisible by $N_i$. 
Since integer Voronoi cells are disjoint, the constructed cubes are also disjoint.
The
following lemma states, in particular, that the set of vertices missed by these cubes is ``small''.

\begin{lemma}\label{lm:VCDiam} Let $m\in\I N$ be arbitrary and, for $\V u\in\I T^k$, let  
$X_{\V u}\subseteq \I Z^d$ be the set of points at $L^\infty$-distance
at most $m$ from $\I Z^d\setminus \bigcup \C Q_{i,\V u}$. Then the (equivariant) family $(X_{\V u})_{\V u\in \I T^k}$
has uniform density at most $O((m+N_{i})/N_{i+2})$.\end{lemma}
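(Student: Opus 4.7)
The plan is to show that $X_{\V u}$ sits inside a thin shell around the boundaries of the integer Voronoi cells $C_{i,\V s,\V u}$, and then to bound the size of this shell on any large cube by summing over seeds in $S_{i,\V u}$. As a preliminary step, I would establish a diameter bound on each Voronoi cell: by the maximality of $S_i$ as an $N_{i+2}$-sparse subset of $\I T^k$, every point of $\I Z^d$ must lie at $L^\infty$-distance at most $N_{i+2}$ from some seed in $S_{i,\V u}$, since otherwise the corresponding element of $\I T^k$ could be adjoined to $S_i$ without destroying sparseness, contradicting maximality. Hence each cell $C_{i,\V s,\V u}$ is contained in the cube $\V s+[-N_{i+2},N_{i+2}]^d$.

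Next, I would argue that $Y_{\V u}:=\I Z^d\setminus\bigcup\C Q_{i,\V u}$ is contained in the $L^\infty$-$N_i$-neighbourhood of the Voronoi-cell boundaries. Indeed, if $\V n\in C_{i,\V s,\V u}$ lies at $L^\infty$-distance at least $N_i$ from $\I Z^d\setminus C_{i,\V s,\V u}$, then the aligned $N_i$-cube $\V a+[0,N_i-1]^d$ with $\V a:=\V s+N_i\lfloor(\V n-\V s)/N_i\rfloor$ (floor taken componentwise) contains $\V n$ and sits entirely inside $C_{i,\V s,\V u}$, whence $\V n\in\bigcup\C Q_{i,\V u}$; the (few) points equidistant in $L^\infty$ to two or more seeds belong to no Voronoi cell but lie on a cell boundary and are handled in the same way. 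Consequently $X_{\V u}=\dist_{\le m}(Y_{\V u})$ is covered by the union, over $\V s\in S_{i,\V u}$, of the $(m+N_i)$-neighbourhoods of $\partial C_{i,\V s,\V u}$.

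The step I expect to require the most care is the bound on the shell around a single cell. The underlying continuous Voronoi cell of $\V s$ is a convex polytope contained in a cube of side $2N_{i+2}+1$, so by Cauchy's surface-area formula (or the elementary observation that each orthogonal projection of a convex set is contained in the corresponding projection of any bounding box) its surface area is $O(N_{i+2}^{d-1})$. A standard tubular-neighbourhood estimate for convex bodies in $\I R^d$, transferred to the integer lattice up to constants depending only on $d$, then yields the bound $O\bigl((m+N_i)\,N_{i+2}^{d-1}\bigr)$ on the number of lattice points within $L^\infty$-distance $m+N_i$ of $\partial C_{i,\V s,\V u}$, at least when $m+N_i\le N_{i+2}$. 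When $m+N_i$ is of order $N_{i+2}$ or larger, the same bound still holds up to constants because the entire cell has $O(N_{i+2}^d)$ points; in that regime the target density $O((m+N_i)/N_{i+2})$ is already $\Omega(1)$ and so trivially satisfied.

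Finally, I would combine these ingredients on an $r$-cube $Q\subseteq\I Z^d$ with $r:=N_{i+2}$, which serves as the witness for uniform density. Seeds in $S_{i,\V u}$ whose Voronoi cells intersect $Q$ all lie within $L^\infty$-distance $N_{i+2}$ of $Q$; by $N_{i+2}$-sparseness their number is $O(1)$. Summing the shell estimate over these seeds gives $|X_{\V u}\cap Q|\le O\bigl((m+N_i)\,N_{i+2}^{d-1}\bigr)=O\bigl(r^d\,(m+N_i)/N_{i+2}\bigr)$, which is exactly the uniform density bound $O((m+N_i)/N_{i+2})$ claimed by the lemma.
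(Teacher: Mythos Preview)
Your outline shares the paper's skeleton: locate the uncovered region inside a shell around the Voronoi boundaries, invoke maximality for a diameter bound, and use sparseness to count the cells contributing over a large cube. Steps 1, 2, and 4 are sound and essentially match the paper.

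The gap is Step 3. The Voronoi cells here are taken with respect to the $L^\infty$ norm (see the definition of $C_{i,\V s,\V u}$ in~\eqref{eq:Csu}), and $L^\infty$ Voronoi cells are \emph{not} convex in general. Already for two sites $(0,0)$ and $(1,0)$ in $\I R^2$, the region $\{\V x:\|\V x\|_\infty\le\|\V x-(1,0)\|_\infty\}$ contains $(2,2)$ and $(2,-2)$ but not their midpoint $(2,0)$; a rescaled version of this non-convexity persists inside the bounded cell of a maximal $N_{i+2}$-sparse seed set. Hence neither Cauchy's surface-area formula nor the standard tubular-neighbourhood estimate for convex bodies applies, and your $O\bigl((m+N_i)\,N_{i+2}^{d-1}\bigr)$ shell bound is unjustified as written. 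There is a further complication you have not addressed: in the $L^\infty$ metric the equidistant set between two seeds can have positive $d$-dimensional Lebesgue measure, so even making sense of ``surface area'' for these cells requires care.

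The paper sidesteps all of this by avoiding surface area. It works with the real Voronoi cell $C^{\I R}$ of $\V s$, which is star-shaped with respect to $\V s$ and (by sparseness) contains the $L^\infty$-ball of radius $N_{i+2}/2$ about $\V s$, and runs a volume comparison: the homothetic copy $\gamma\,C^{\I R}+(1-\gamma)\V s$ with $\gamma=1-2(m+N_i)/N_{i+2}$ has volume $\gamma^d\,\mathrm{vol}(C^{\I R})$, and the paper argues this shrunk copy is disjoint from $[0,1)^d+\V n$ for every $\V n\in X_{\V u}$. This gives the density bound $1-\gamma^d=O((m+N_i)/N_{i+2})$ directly, with no appeal to convexity or surface area.
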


\begin{proof}  Fix $\V u\in\I T^k$. Let 
 $$
 C_{i,\V s,\V u}^{\I R}:=\big\{\V x\in \I R^d\mid \forall \V s'\in S_{i,\V u}\ \ \|\V x-\V s\|_\infty\le \|\V x-\V s'\|_\infty\big\},
 $$
  be the \emph{real Voronoi cell} of $\V s\in S_{i,\V u}$. (The differences to the definition (\ref{eq:Csu})
are that now we consider any real vectors and we also include the boundary points.)

Take any $\V n\in X_{\V u}$. Let $\V s\in S_{i,\V u}$ be arbitrary such that the real
cube $[0,1)^d+\V n$ intersects $C_{i,\V s,\V u}^{\I R}$. We know that $\V n$ is at distance
at most $m$ from some $\V n'\in \I Z^d\setminus \bigcup \C Q_{i,\V u}$. Let $Q\subseteq \I Z^d$ be
the (unique) $N_i$-cube containing $\V n'$ that $\V s$ would have liked to claim (that is, $Q=\prod_{j=1}^d [a_j,a_j+N_i-1]\ni\V n'$ with each $a_j$ congruent to $s_j$ modulo $N_i$). Since $Q$ does not lie
inside the integer Voronoi cell of $\V s$, it has to contain a point $\V n''$
which is not farther from some $\V s'\in S_{i,\V u}\setminus\{\V s\}$ than from $\V s$. 
Thus $\dist(\V n,Y)\le \|\V n-\V n''\|_\infty\le 
m+N_i-1$, where $Y$ is the closure of $\I R^d\setminus C_{i,\V s,\V u}^{\I R}$. It follows that every element of 
$[0,1)^d+\V n$ is at 
distance at most $m+N_i$
from $Y$. 

On the other hand, by the $N_{i+2}$-spareness
of $S_{i,\V u}$ the distance between $\V s$ and $Y$ is larger than $N_{i+2}/2$.
Thus, if we shrink $C_{i,\V s,\V u}^{\I R}$ by factor $\gamma:=(N_{i+2}-2m-2N_i)/N_{i+2}$ from $\V s$, i.e.\
we take the set $\gamma\, C_{i,\V s,\V u}^{\I R}+(1-\gamma)\V s\subseteq C_{i,\V s,\V u}^{\I R}$, then it will be disjoint
from $[0,1)^d+\V n$. 
It follows that the set $[0,1)^d+ X_{\V u}:=\cup_{\V n\in X_{\V u}} ([0,1)^d+\V n)$ can cover at most $1-\gamma^d$
fraction of the volume of any Voronoi cell $C_{i,\V s,\V u}^{\I R}$. 

By the maximality of $S_{i,\V u}\subseteq \I Z^d$
(and since any point of $\I R^d$ is at distance at most $1/2$ from $\I Z^d$), 
the distance between $\V s\in S_{i,\V u}$ and any point on the boundary of $C_{i,\V s,\V u}^{\I R}$ 
is at most $N_{i+2}+1/2$. Thus the real Voronoi cells, that cover the whole space, 
have a uniformly bounded diameter. It follows that there is a constant $N$ (independent
of $\V u$) such that the
density of $X_{\V u}$ inside any $N$-cube is at most, say, 
$2(1-\gamma^d)=O((m+N_i)/N_{i+2})$, as required.\end{proof}

\subsubsection{Constructing the matchings $\C M_i$}

Iteratively for $i=0,1,\dots$, we will construct a 
measurable matching $\C M_i$ in $\C G$ (or, equivalently, 
an equivariant and measurable 
family $(\C M_{i,\V u})_{\V u\in\I T^k}$ where $\C M_{i,\V u}$ is a matching in 
$\C G_{\V u}$)
such 
that the following two properties hold for each $\V u\in\I T^k$:
 \begin{enumerate}
  \item every edge of $\C M_{i,\V u}$ lies inside some cube $Q\in\C Q_{i,\V u}$ (that is, 
$\C M_{i,\V u}\subseteq \cup_{Q\in \C Q_{i,\V u}} Q^2$);
   \item for every cube $Q\in\C Q_{i,\V u}$ the restriction of $\C M_{i,\V u}$ to $Q$
(more precisely, to the induced bipartite subgraph $\C G_{\V u}[Q,Q]$) 
is a maximum matching.
 \end{enumerate}

For $i=0$, we construct $\C M_{0,\V u}$ by taking a maximum matching inside each cube $Q\in\C Q_{0,\V u}$. In order to make it equivariant and measurable we consistently
use some local rule: for example, take the lexicographically smallest
maximum matching in $Q$, with respect to the natural labelling of the $N_0$-cube $Q$ by $[N_0]^d$. 

Let us explain how this can be realised by a local rule $\C R$ of radius $2N_0+N_2$. The rule transforms the function  $g:\I T^k\to\I N$ that encodes the triple of sets $(A,B,S_0)$ into
one that encodes $\C M_0$. Take any $\V a\in A$.
Note that the restriction of $g_{\V a}$ to $Q':=[-2N_0-N_2,2N_0+N_2]^d$ determines $A_{\V a}\cap Q'$,
$B_{\V a}\cap Q'$ and
$S_{0,\V a}\cap Q'$. Since $S_{0,\V a}$ is maximal $N_2$-sparse, we have that $S_{0,\V a}\cap [-N_2,N_2]^d\not=\emptyset$. From the latter set, take
a point $\V s$  which is closest to the origin~$\B 0$.
Let
$Q\subseteq\I Z^d$ be the $N_0$-cube that contains $\B 0$ and
belongs to the $N_0$-grid centred at~$\V s$. We have that
$Q\in\C Q_{0,\V a}$ if and only if every element of $Q$ is closer to $\V s$
than to  $S_{0,\V a}\setminus\{\V s\}$. We see that all elements of $S_{0,\V a}$ that
can ``interfere'' with~$Q$ are at
distance at most 
$N_0+N_2$ from $Q$ and thus are confined to~$Q'$.
Therefore, the set $S_{0,\V a}\cap Q'$ determines 
the $N_0$-cube of $\C Q_{0,\V a}$ containing~$\B 0$, if it exists (which has to 
be $Q$ then).
Suppose that $Q\in \C Q_{0,\V a}$. Since $Q$ is a subset of $Q'$, we know the intersections of
$A_{\V a}$ and $B_{\V a}$ with $Q$. Now, among all (finitely many) maximum 
matchings  in $\C G_{\V a}[A_{\V a}\cap Q,B_{\V a}\cap Q]$, take the  lexicographically 
smallest matching $\C M\subseteq Q^2$. If $\C M$ matches $\B 0\in A_{\V a}$,
then the local rule outputs $\V n:=\C M(\B 0)$, which tells us that the $\C M_0$-match of $\V a\in A$ is $\V a+\sum_{i=1}^d n_i\V x_i$. (This is an element of $B$ since $\V n\in B_{\V a}$.) Otherwise (namely, if $\V a\not \in A$, or $Q\not\in \C Q_{0,\V a}$, or
$\C M(\B 0)$ is undefined) the local rule outputs that $\V a$ is not matched.
The obtained matching
$\C M_0$ is measurable by Lemma~\ref{lm:local}, since each of the sets $A, 
B,S_0\subseteq \I T^k$ is measurable.
From now on, we may omit details like this.

Let $i\ge 1$ and suppose that we have $\C M_{i-1}$ satisfying all above properties. Let us describe how to construct $\C M_i$. We will do this in
three steps, producing intermediate matchings $\C M_i'$ and $\C M_i''$.
For each step, we also provide an upper bound on the measure of vertices 
in $A$ that undergo some change; these estimates will be later used to 
argue that~\eqref{eq:aim2} holds. So, take any $\V u\in \I T^k$. 

Let $\C M_{i,\V u}'$ consist of
those edges $(\V m,\V n)\in \C M_{i-1,\V u}$ for which there is a cube $Q\in\C Q_{i,\V u}$ with $\V m,\V n\in Q$. In
other words, when we pass from $\C M_{i-1}$ to $\C M_{i}'$, we discard all previous edges that have at least one vertex in 
$X:=\I T^k\setminus \bigcup \C Q_i$ or connect two different cubes of $\C Q_i$. By 
Lemma~\ref{lm:VCDiam}, the set $X$ has uniform density at most $O(N_i/N_{i+2})$ while, trivially,
points within distance $\MD$ from the boundary of some $\C Q_{i,\V u}$-cube
have uniform
density $O(1/N_i)$. Thus, by Lemma~\ref{lm:density}, when we pass from $\C M_{i-1}$ to $\C M_i'$, we change
the current matching on a set of measure 
 \begin{equation}\label{eq:Mi'}
 \lambda((\C M_{i-1}\bigtriangleup \C M_i')^{-1}(B))=O(N_i/N_{i+2}+1/N_i).
 \end{equation} 

Next, let $\C M_{i,\V u}''$ be obtained by modifying $\C M_{i,\V u}'$ as follows. 
For every cube $Q\in\C Q_{i,\V u}$ that has at least one vertex that lies outside of $\bigcup \C Q_{i-1,\V u}$ or has vertices that come from different integer Voronoi cells of $S_{i-1,\V u}$, let the restriction of
$\C M_{i,\V u}''$ to $Q$ be any maximum matching. (Thus we completely ignore $\C M_{i,\V u}'$ inside such
cubes $Q$.)
Lemma~\ref{lm:VCDiam} (applied to $i-1$ and $m=N_i$) 
shows that the union of such cubes $Q$ has uniform density $O(N_i/N_{i+1})$. Thus
by Lemma~\ref{lm:density}, we have that
 \begin{equation}\label{eq:Mi''}
  \lambda((\C M_{i}'\bigtriangleup \C M_i'')^{-1}(B))=O(N_i/N_{i+1}).
  \end{equation} 

Finally, we have to show how to obtain the desired matching $\C M_i$ by modifying
$\C M_i''$ on the remaining cubes, so that the new matching
is maximum inside each $\C Q_{i,\V u}$-cube. 
Let $Q\in\C Q_{i,\V u}$ be one of the remaining cubes. This means that $Q$ lies entirely inside
the Voronoi cell $C_{i,\V s,\V u}$ of some $\V s\in S_{i-1,\V u}$ and is completely covered by $N_{i-1}$-cubes
from $\C Q_{i-1,\V u}$. These cubes when restricted to $Q$ make a regular grid that, however, need
not be properly aligned with the sides of $Q$. Take $j\in[d]$. Let $I_j\subseteq \I Z$ be the projection of
$Q$ onto the $j$-th axis and let $I_{j,0}\cup \dots\cup I_{j,t_j}$ be the partition of $I_j$ into consecutive intervals given by the grid. Each $t_j$ is $2^h-1$ or $2^h$, where 
 $$
 h:=\log_2(N_i/N_{i-1}).
 $$ 
 (Since $N_{i-1}<N_i$ are powers of $2$, 
$h$ is a positive integer.)  If $t_j=2^h$, then we change the partition of $I_j$ as follows.
By reversing the order
of the second index if necessary, assume that $|I_{j,t_j}|\le N_{i-1}/2$ (note that $|I_{j,0}|+|I_{j,t_j}|=N_{i-1}$),
merge $I_{j,t_j}$ into $I_{j,t_j-1}$ (with the new interval still denoted as $I_{j,t_j-1}$),
and decrease $t_j$ by 1.
After we perform this merging for all $j$ with $t_j=2^h$, the obtained interval partitions induce a new grid on 
$Q$ that splits it into $2^{hd}$ rectangles. We call these rectangles \emph{basic} and naturally index them by $(\{0,1\}^d)^h$. Namely,  for
$\V B=(\V b_1,\dots,\V b_h)$ with each $\V b_t$ being a binary sequence of length $d$,
let $R^{\V B}:=\prod_{j=1}^d I_{j,s_j}$, where $s_j\in[0,2^h-1]$ is the integer whose 
base-$2$ representation has digits $(b_{1,j},\dots,b_{h,j})$. Furthermore, if $(\V b_1,\dots,\V b_t)\in
(\{0,1\}^d)^t$ with $t< h$, then we define 
 $$
 R^{(\V b_1,\dots,\V b_t)}:=\cup_{(\V b_{t+1},\dots,\V b_h)\in  (\{0,1\}^d)^{h-t}}\, R^{(\V b_1,\dots,\V b_h)}
  $$
 to be the union of  those basic rectangles whose index sequence has $(\V b_1,\dots,\V b_t)$ as a prefix.
Thus,
the largest (or \emph{level-0}) rectangle is $R^{(\,)}=Q$, which splits into $2^d$ \emph{level-1} rectangles as $R^{(\,)}=\cup_{\V b\in \{0,1\}^d} R^{(\V b)}$, each of which splits further as $R^{(\V b)}=\cup_{\V a\in \{0,1\}^d}\,  R^{(\V b,\V a)}$, and so on
until we get the basic rectangles at \emph{level~$h$}. See Figure~\ref{fg:rectangles} for an
illustration with $d=h=2$.

\begin{figure}[t]
\begin{center}
\includegraphics[height=3.7cm]{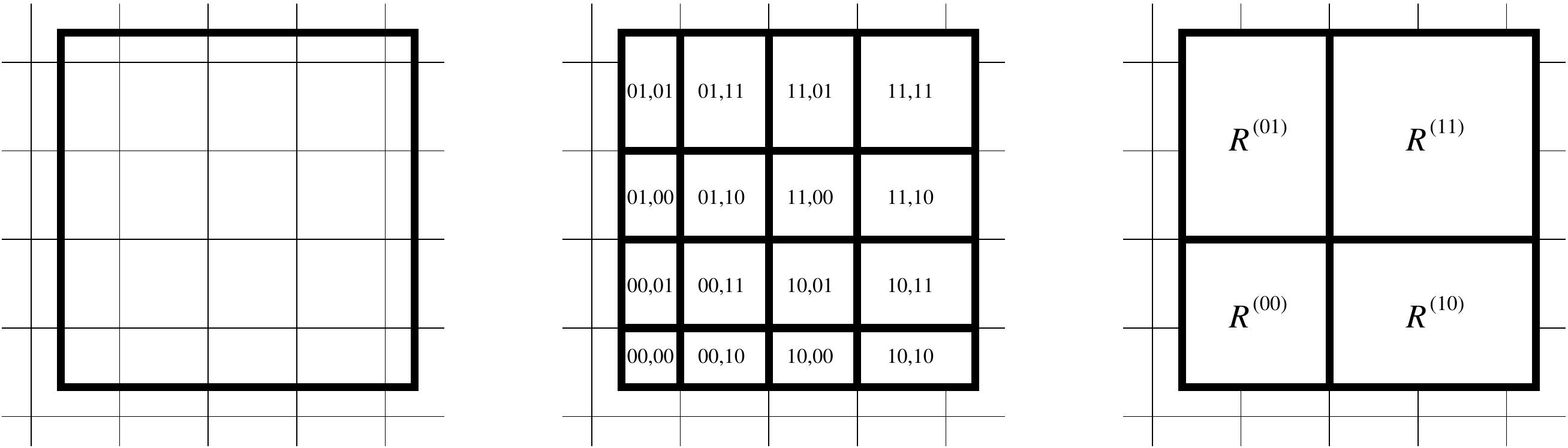}
\end{center}
\caption{\quad i) Cube $Q$ and $\C Q_{i-1,\V u}$-grid;\quad ii) basic rectangles;\quad iii) level-1 rectangles}\label{fg:rectangles}
\end{figure}

Note that, for each $j\in [0,h]$, a level-$j$ rectangle has side lengths between $2^{h-j}N_{i-1}\pm N_{i-1}/2$;
in particular it is $3$-balanced.  Also,
$\C M_{i,\V u}''\subseteq \C M_{i-1,\V u}$ cannot connect two different basic rectangles. Since $\C M_{i,\V u}''\cap Q^2=\C M_{i-1,\V u}\cap Q^2$, it is a maximum matching
on every basic rectangle which is  an element of $\C Q_{i-1,\V u}$. 

We are now ready to describe how we modify $\C M_{i,\V u}''$ on $Q$. First, put an
arbitrary maximum matching on every basic rectangle not in $\C Q_{i-1,\V u}$. 
(For example, in Figure~\ref{fg:rectangles}.ii these happen to be all 12 rectangles at the boundary of~$Q$.)
This involves at most $2N_{i-1}\cdot dN_i^{d-1}$ elements of $Q$ and thus has uniform density at most $O(N_{i-1}/N_i)$. 
Next, we iteratively repeat the following for $\ell=h,\dots,1$. Suppose that the current matching is maximum when restricted to each level-$\ell$
rectangle (i.e.\ to each $R^{(\V b_1,\dots,\V b_{\ell})}$) and does not connect two such rectangles. 
(Note that this is the case at the initial step $\ell=h$.)
Inside each level-$(\ell-1)$ rectangle $R$, iteratively augment the current matching using paths of
length at most $ (2^{h-\ell+1}+1/2)N_{i-1}$ until none remains. Clearly, each augmentation
increases the size of the matching inside the finite set $R$, so we run out
of augmenting paths after finitely many flips. 
By Lemma~\ref{lm:ShortAug}, the final matching in $\C G_{\V u}[R,R]$ covers all 
vertices in one part. In particular, it is maximum (and we can proceed with the 
next value of $\ell$). 

Note that the number of unmatched
vertices in any level-$(\ell-1)$ rectangle $R^{\V B}$, $\V B\in (\{0,1\}^d)^{\ell-1}$, before Iteration $\ell$ was
at most  $\sum_{\V b\in \{0,1\}^d}
D(R^{(\V B,\V b)})$, where
 \beq{eq:D}
 D(Y):=\big|\, |A_{\V u}\cap Y|-|B_{\V u}\cap Y|\, \big|
 \eeq
 denotes the \emph{$(A_{\V u},B_{\V u})$-discrepancy} of a finite set $Y\subseteq \I Z^d$. 
(This holds because, again by Lemma~\ref{lm:ShortAug}, a maximum matching
inside $R^{(\V B,\V b)}$ for each $\V b\in \{0,1\}^d$ has to match one part completely.) Thus, when we pass from $\C M_i''$ to
$\C M_i$, the density of changes inside $Q$ caused by augmenting paths is at most
 \beq{eq:QChange}
  \frac{1}{|Q|}\sum_{\ell=1}^h\, O(N_i/2^{\ell}) \sum_{\V B\in(\{0,1\}^d)^{\ell}} D(R^{\V B}).
 \eeq

Call a level-$\ell$ rectangle $R$ \emph{special} if at least two of its side lengths are different 
from $N_{i}/2^\ell$. (For example, in Figure~\ref{fg:rectangles}.ii the special rectangles happen
to be all four corner rectangles.)

The $(A_{\V u},B_{\V u})$-discrepancy of a non-special level-$\ell$ rectangle $R$ can be bounded by decomposing it
into $(d-1)$-dimensional $N_{i}/2^\ell$-cubes and using the $\Psi$-uniformity of both $A_{\V u}$
and $B_{\V u}$:
 $$
 D(R)\le 2\,\Psi(N_{i}/2^\ell)\, \frac{|R|}{(N_{i}/2^\ell)^{d-1}}.
 $$
 Thus such rectangles contribute $O(\Psi(N_{i}/2^\ell)/(N_{i}/2^\ell)^{d-2})$ to (\ref{eq:QChange}).

We bound the
$(A_{\V u},B_{\V u})$-discrepancy of a special level-$\ell$ rectangle $R$  using the following argument with $N:=N_{i}/2^\ell$
and $n:=N_{i-1}/2$. For notational convenience, assume that $R=\prod_{j=1}^d [r_j]$ 
(thus $|r_j-N|\le n$ for each $j\in[d]$) and
that $r_j\ge N$ exactly for $j\in [t]$. For $j\in[t]$ (resp.\ $j\in[t+1,d]$)
let $R_j$ be the rectangle which is the product of $d$ copies of $[N]$ except the $j$-th factor
is $[N+1,r_j]$ (resp.\ $[r_j+1,N]$).  We can transform $[N]^d$
into $R$ by adding the rectangles $R_1,\dots,R_{t}$, then subtracting $R_{t+1},\dots,R_d$, and finally
adjusting those vertex multiplicities that are still wrong. The last step involves at most ${d\choose 2} n^2N^{d-2}$ vertices of $R$ and each multiplicity has to be adjusted by at most~$d$. Thus we have the
following Bonferroni-type inequality:
 $$
 D(R)\le D([N]^d)+ \sum_{j=1}^d D(R_j) + d {d\choose 2} n^2N^{d-2}
  = O(N\,\Psi(N)+ n^2N^{d-2}).\label{eq:D2}
 $$
 
Also, the total number of special rectangles at level $\ell$ is at most 
 $O(2^{\ell(d-2)})
 $.
 Indeed,  we have at most $4{d\choose 2}$ ways  to choose a ``$(d-2)$-dimensional face'' 
$F$ of $Q$, and then observe that $F$ intersects
at most $2^{\ell(d-2)}$ level-$\ell$ rectangles (while each special rectangle 
must have a non-empty footprint in at least one $(d-2)$-dimensional face of $Q$).

Thus, special level-$\ell$ rectangles contribute $O\big(4^{-\ell}\,\Psi(N_i/2^\ell)/(N_i/2^\ell)^{d-2}+2^{-\ell} N_{i-1}^2/N_i\big)$ to~(\ref{eq:QChange}). 

Putting all together and using Lemma~\ref{lm:density}, we get the following upper bound on the measure of 
points where $\C M_i''$ and $\C M_i$ differ: 
 \begin{equation}\label{eq:Mi'''}
 \lambda((\C M_i''\triangle\C M_i)^{-1}(B))
 =    O(N_{i-1}^2/N_i)+  O(1)\, \sum_{\ell=1}^h \frac{\Psi(N_{i}/2^\ell)}{(N_{i}/2^\ell)^{d-2}}.
 \end{equation}
 
Having constructed $\C M_i$, we increase $i$ by one and repeat the above procedure. It remains to show that the constructed sequence of measurable matchings $(\C M_i)_{i\in \I N}$ has the required properties. 

Observe that each upper bound in~\eqref{eq:Mi'}, \eqref{eq:Mi''}, and~\eqref{eq:Mi'''} is a summable
function of $i\in\I N$. This directly follows from our choice of the sequence $(N_i)_{i\in\I N}$, with the 
exception of the second term in the right-hand side of~\eqref{eq:Mi'''}. Here, if we sum these terms over $i\in\I N$, then each integer power $2^j$ can appear as $N_i/2^\ell$ at most once 
(indeed, $i\in\I N$ has to be the unique index such that $N_{i-1}\le 2^j<N_{i}$);
thus the
resulting sum converges by (\ref{eq:SumPsiFin}). 
Now (\ref{eq:aim2}) follows, since
 $$
  \C M_{i-1}\bigtriangleup\C M_i\subseteq (\C M_{i-1}\bigtriangleup\C M_i')\cup 
  (\C M_{i}'\bigtriangleup\C M_i'')\cup
  (\C M_{i}''\bigtriangleup\C M_i).
  $$

Since $\C M_{i,\V u}$ covers all but at most $D(Q)\le 2\Phi(N_i)$ 
vertices inside each cube in $\C Q_{i,\V u}$ while the set of vertices not covered by $\C Q_{i}$
has uniform density $O(N_i/N_{i+2})$ by Lemma~\ref{lm:VCDiam}, the set $A\setminus\C M_i^{-1}(B)$ has
uniform density at most $O(\Phi(N_i)/N_i^d+N_i/N_{i+2})$. Clearly, this tends to zero as $i\to\infty$. Thus,
by Lemma~\ref{lm:density}, the other desired estimate (\ref{eq:aim1}) also holds.

The proof of Part~\ref{it:Lebesgue} of Theorem~\ref{th:Suff}
can now be completed, as it was described
after (\ref{eq:aim2}).

\subsection{Proof of Lemma~\ref{lm:ShortAug}}\label{ShortAug}

This section is dedicated to proving 
Lemma~\ref{lm:ShortAug} that was needed in the proof of Part~1 of 
Theorem~\ref{th:Suff}.  
Note that Lemma~\ref{lm:ShortAug}
is a combinatorial statement that does not involve any notion of equivariance or 
measurability.

For $X,R\subseteq \I Z^d$,  define the
 \emph{internal boundary of $X$ relative to $R$} to be 
 $$
 \partial^R X:=(\partial X)\cap R^2=\{(\V m,\V n)\in R^2\mid \V m\in X,\ \V n\not\in X\}
 $$ 
 and the \emph{internal perimeter of $X$ relative to $R$} to be $p^R(X):=|\partial^R X|$. Recall that a rectangle $R\subseteq \I Z^d$ is called  
\emph{$\rho$-balanced} if the ratio of any its two side lengths is at most $\rho$.

Our next lemma states that a positive fraction of the boundary of a set $X$ lying inside a $\rho$-balanced
rectangle $R$ is internal, unless $X$ occupies most of~$R$.

\begin{lemma}\label{lm:boundaries} For every $R,X\subseteq \I Z^d$,
where $X\subseteq R$ and $R$ is a $\rho$-balanced rectangle,
we have that 
 $$
  p^R(X)\ge \frac{p(X)}{3d\rho}\cdot  \frac{ |R\setminus X|}{|R|}.
  $$
\end{lemma}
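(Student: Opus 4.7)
The plan is to reduce the inequality to a one-dimensional statement by slicing $R$ along coordinate directions, and then combine the slices.

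For the one-dimensional case one shows: for an interval $I\subseteq\I Z$ and $X\subseteq I$ with $Y=I\setminus X$, $p^I(X)\ge \frac{1}{3}\,p(X)\cdot\frac{|Y|}{|I|}$. Letting $r$ denote the number of maximal runs of $X$ in $I$ and $\epsilon\in\{0,1,2\}$ the number of endpoints of $I$ contained in $X$, we have $p(X)=2r$ and $p^I(X)=2r-\epsilon$. The bound is verified by a short case analysis using $|Y|/|I|\le 1$; the only delicate case is $\epsilon=2$, which together with $X\ne I$ forces $r\ge 2$.

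For the $d$-dimensional case, the idea is to apply the 1D bound line-by-line. Concretely, for each direction $j\in[d]$ and each line $L$ parallel to $\V e_j$ meeting $R$ in an interval of length $\ell_j$, set $X_L:=X\cap L$ and apply the 1D result to obtain $p^{R\cap L}(X_L)\ge \frac{1}{3\ell_j}\,p(X_L)\cdot|Y\cap L|$. Summing over the lines in direction $j$ bounds $p_j^R(X)$ from below by a weighted sum of products $p(X_L)\cdot|Y\cap L|$; then summing over $j$ gives a lower bound for $p^R(X)$.

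The main obstacle is that this correlation sum need not factor into $\big(\sum p(X_L)\big)\big(\sum|Y\cap L|\big)/|\pi_j(R)|$: if $X$ is a sub-rectangle that spans $R$ in some direction $j$, then $|Y\cap L|=0$ on every line $L$ in direction $j$ with $p(X_L)>0$, so that direction contributes nothing to the lower bound even though $p_j(X)$ is large. The fix is to transfer contributions between directions using the $\rho$-balancedness of $R$, which ensures that no direction is much shorter than any other, so the loss in a "bad" direction can be compensated by another direction at cost at most a factor~$\rho$. Combined with the factor $d$ from summing over directions and the factor $3$ from the 1D step, this yields the claimed constant $3d\rho$. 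A cleaner alternative that bypasses the correlation issue entirely is to use the identity $2p^R(X)=p(X)+p(Y)-p(R)$, which follows by double-counting edges crossing $\partial R$, together with Lemma~\ref{lm:Isop} applied to both $X$ and $Y$, splitting into cases depending on whether $p(X)$ dominates $p(R)$.
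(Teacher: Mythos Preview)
Your one-dimensional lemma is correct, and the identity $2p^R(X)=p(X)+p(Y)-p(R)$ is correct as well. However, the proposal is not a complete proof: you yourself identify the obstacle (the correlation between $p_L(X_L)$ and $|Y\cap L|$ can vanish on the very lines that carry all of $p_j(X)$), and neither of your two suggested fixes is actually carried out.

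For fix~(a), ``transferring contributions between directions at cost $\rho$'' is only an intuition. You would need a precise inequality saying that whenever a direction~$j$ contributes heavily to $p(X)$ but nothing to the line-by-line sum, some other direction compensates; it is not clear how to formulate or prove such a statement, let alone with the right constant. For fix~(b), the identity together with Lemma~\ref{lm:Isop} does settle the case $p(X)\ge 2p(R)$ (then $2p^R(X)\ge p(X)/2$), but in the complementary case $p(X)<2p(R)$ you still have to show $p^R(X)\ge \frac{p(X)}{3d\rho}\cdot\frac{|R\setminus X|}{|R|}$, and Loomis--Whitney on $X$ and $Y$ alone does not close this; one needs an additional idea relating $p^R(X)$ to~$|Y|$ and the side lengths of~$R$.

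The paper takes a different route that avoids the correlation issue entirely. Assuming $p^R(X)<p(X)/\gamma$ with $\gamma=d\rho/2+2d+1$, it first lower-bounds $|X|$: pick the direction $j$ carrying at least a $1/d$ share of the external boundary $\partial X\cap\partial R$, and observe that all but at most $p^R(X)$ of the corresponding lines lie entirely in $X$, whence $|X|\ge r_j\cdot\frac{(\gamma-2d-1)}{2d\gamma}\,p(X)$. Then it uses a path-counting argument: for every pair $(\V a,\V b)\in X\times (R\setminus X)$ the ``staircase'' path in $R$ from $\V a$ to $\V b$ crosses $\partial^R X$, and any fixed edge of $\partial^R X$ lies on at most $(r/4)|R|$ such paths, giving $p^R(X)\ge 4|X|\,|R\setminus X|/(r|R|)$. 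Combining the two bounds with $r_j\ge r/\rho$ yields the stated inequality. This path-counting step is the idea your sketch is missing.
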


\begin{proof} Let $\gamma:=d\rho/2+2d+1$, which is at most $3d\rho$ since $d\ge 2$ and $\rho\ge1$. We can assume that
$p^R(X)<p(X)/\gamma$ for otherwise we are trivially done. Let $j\in [d]$
be such that at least $1/d$-fraction of the common boundary
$\partial X\cap \partial R$ goes in the $j$-th coordinate direction. 
Thus the number of lines parallel to the $j$-th coordinate axis that 
intersect $X$ is at least half of this quantity. Trivially, at most
$p^R(X)$ of these lines can intersect $R\setminus X$ while every other line
contains $r_j$ points from $X$, 
where $r_1,\dots,r_d$ are the side lengths of $R$. Thus
 \begin{equation}\label{eq:|X|}
  |X|\ge r_j\, \left(\frac{p(X)-p^R(X)}{2d}-p^R(X)\right)\ge \frac{r_j(\gamma-2d-1)}{2d\gamma}\, p(X).
 \end{equation}

For every pair $(\V a,\V b)\in X\times (R\setminus X)$ consider the path inside $R$
which is the union of the $d$ ``straight-line'' paths that connect 
the following $d+1$ points in the stated order:
 $$
  \V a=(a_1,\dots,a_d),\ \ (b_1,a_2,\dots,a_d),\ \ (b_1,b_2,a_3,\dots,a_d),\ \ \dots,\ \
 \V b=(b_1,\dots,b_d).
 $$ 
 Each such path contains at least one pair from $\partial^RX$. Conversely,
every (ordered) pair in the internal boundary of $X$ going, for example, in the 
$i$-th direction
is in at most $(r_i^2/4) \prod_{h\in [d]\setminus\{i\}} r_h=(r_i/4)|R|$ such 
paths. Denoting $r:=\max\{r_i\mid i\in [d]\}$ and using~\eqref{eq:|X|}, we 
conclude that
 $$
 p^R(X)\ge \frac{|X|\cdot|R\setminus X|}{(r/4)\, |R|}\ge \frac{2(\gamma-2d-1)p(X)}{d\rho \gamma}\cdot \frac{|R\setminus X|}{|R|}.
 $$
 Using that $\gamma\le 3d\rho$ satisfies $2(\gamma-2d-1)=d\rho$, we obtain the required bound.\end{proof}

For a real $\delta\ge 0$ and sets $X,R\subseteq \I Z^d$ such 
that $R$ is finite, let the \emph{discrepancy of $X$ relative to $R$
of density $\delta$} be
 $$
 D_{\delta}(X;R):=\big|\,|X\cap R|-\delta\, |R|\,\big|.
 $$
 Thus,
 a set $X\subseteq\I 
Z^d$ is $\Phi$-uniform of density $\delta$ if and only if $D_\delta(X;Q)\le 
\Phi(2^i)$ for 
every $2^i$-cube $Q\subseteq \I Z^d$.

\renewcommand{\CC}{M}

Clearly, Lemma~\ref{lm:ShortAug} follows from Part~2 of the following result
when applied to $\CA:=A_{\V u}$ and $\CB:=B_{\V u}$,
assuming that $\MD$ satisfies 
Lemma~\ref{lm:ShortAugAll} for $\rho:=3$ 
(with $d\in\I N$, $\delta>0$, and $\Phi(2^i)=2^i\Psi(2^i)$ being as in 
Theorem~\ref{th:Suff}).

\begin{lemma}\label{lm:ShortAugAll} For every integer $d\ge 1$, reals
$\delta>0$ and $\rho\ge 1$, and a function $\Phi:\{2^i\mid i\in\I N\}\to \I R$ 
satisfying $\sum_{i=0}^\infty \Phi(2^i)/2^{(d-1)i}<\infty$,
there is a constant $\CC=\CC(d,\delta,\rho,\Phi)$ such that the following holds 
for every
$\rho$-balanced rectangle $R:=\prod_{j=1}^d 
[a_j,a_j+r_j-1]\subseteq \I Z^d$ and sets $\CA,\CB\subseteq \I Z^d$ that are 
$\Phi$-uniform of density $\delta>0$. 

If $N:=\max(r_1,\dots,r_d)$ is the maximum side length of $R$ and 
 $$\C F:=(\CA\cap R,\linebreak\CB\cap R,E)$$ is the bipartite graph with edge set
 $
 E:=\{(\V m,\V n)\in (\CA\cap R)\times (\CB\cap R)\mid \|\V m-\V n\|_{\infty} \le \CC\},
 $ 
then
 \begin{enumerate}
 \item\label{it:StrongHall} for every $X\subseteq \CA\cap R$, we have 
 $$
 |E(X)|\ge \min\left(|X|+10\,d\cdot |X|^{(d-1)/d},\ \frac{|\CB\cap R|}2\right),
 $$
 where $E(X)=\{y\mid \exists\, x\in X\ (x,y)\in E\}$ is the neighbourhood 
of $X$ in~$\C F$;
 \item\label{it:ShortAug} for every matching $\C M$ in $\C F$ that leaves 
unmatched elements
in both parts, there is an augmenting path of length at most $N$.
 \end{enumerate}
\end{lemma}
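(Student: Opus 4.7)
The plan is to deduce Part~\ref{it:ShortAug} from Part~\ref{it:StrongHall} via a standard alternating-BFS argument, and to prove Part~\ref{it:StrongHall} by combining the geometric Lemmas~\ref{lm:boundaries} and~\ref{lm:Isop} with the $\Phi$-uniformity of $\CB$; the main technical work is in Part~\ref{it:StrongHall}.

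For Part~\ref{it:StrongHall}, I would rewrite $E(X) = \CB\cap Y$ where $Y := \dist_{\le\CC}(X)\cap R$ and estimate $|\CB\cap Y|$ in two steps.  First, bound the volume $|Y|$: for each pair $(\V m,\V n)\in \partial^R X$ with $\V n = \V m+\V e_j$, extend from $\V n$ along direction $\V e_j$ inside $R$, contributing up to $\CC$ new points in $Y\setminus X$; careful book-keeping of the overcounts gives $|Y|\ge|X|+c_1(d)\cdot\CC\cdot p^R(X)$ up to a lower-order correction.  Chaining Lemma~\ref{lm:boundaries} with Lemma~\ref{lm:Isop},
\[
p^R(X)\ge\frac{p(X)}{3d\rho}\cdot\frac{|R\setminus X|}{|R|}\ge\frac{2|X|^{(d-1)/d}}{3\rho}\cdot\frac{|R\setminus X|}{|R|},
\]
so whenever $|R\setminus X|/|R|$ is bounded below by a constant depending on $\delta$ (which holds as long as $|X|\le|\CA\cap R|\le(\delta+o(1))|R|$ and $\delta$ is bounded away from $1$) we obtain $|Y|\ge|X|+c_2(d,\rho,\delta)\cdot\CC\cdot|X|^{(d-1)/d}$.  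Second, convert the volume to a $\CB$-count by approximating $Y$ from inside with a disjoint union of dyadic cubes of appropriate scales; the cube-by-cube $\Phi$-error accumulates as $O\!\big(\sum_i\Phi(2^i)/2^{(d-1)i}\big)\cdot|X|^{(d-1)/d}$ thanks to the summability hypothesis, yielding $|\CB\cap Y|\ge\delta|Y|-O(|X|^{(d-1)/d})$.  Taking $\CC$ sufficiently large in terms of $d,\delta,\rho,\Phi$ makes the gain beat the error by at least $10d\cdot|X|^{(d-1)/d}$, giving the first branch of the minimum.  The complementary case, $|X|$ close to $|R|$, can only arise when $\delta$ is near $1$; here I would argue that $R\setminus Y$ is small by pigeonholing each missed $\V n\in R\setminus Y$ against its $\CC$-cube disjoint from $X$, recovering $|E(X)|\ge|\CB\cap R|/2$.

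For Part~\ref{it:ShortAug}, fix an unmatched $\CA$-vertex $a_0\in R$ and perform alternating BFS: let $A_k$ be the set of $\CA$-vertices of $R$ reachable from $a_0$ via an alternating path of length $\le 2k$ beginning with an $\C F$-edge, and $B_k:=E(A_k)$.  If no augmenting path of length $\le 2k+1$ exists then every vertex of $B_k$ is $\C M$-matched and $|A_{k+1}|=1+|E(A_k)|$.  As long as $|A_k|+10d|A_k|^{(d-1)/d}\le|\CB\cap R|/2$, Part~\ref{it:StrongHall} gives $|A_{k+1}|\ge|A_k|+10d|A_k|^{(d-1)/d}+1$, which upon taking $d$-th roots yields $|A_{k+1}|^{1/d}\ge|A_k|^{1/d}+9$ past a bounded threshold; by induction $|A_k|\ge(9k)^d$.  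Since $|R|\le\rho^{d-1}N^d$, this regime terminates within $k_0\le\rho^{(d-1)/d}N/9 < N/2$ steps.  Past this point $|B_{k_0}|\ge|\CB\cap R|/2$, and running a symmetric BFS from an unmatched $\CB$-vertex combined with the $\Phi$-uniformity estimate $\big||\CA\cap R|-|\CB\cap R|\big|=O(\Phi(N))$ forces the two BFS trees to meet via an $\C F$- or $\C M$-edge, assembling an augmenting path of total length at most $N$.

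The main obstacle will be the volume-to-count step in Part~\ref{it:StrongHall}: the shell $Y$ has irregular shape depending on $X$, so the cube-based $\Phi$-uniformity must be propagated through a dyadic decomposition whose boundary discrepancy is dominated by the isoperimetric gain $|X|^{(d-1)/d}$.  This is the point at which both the hypothesis $\sum_i\Phi(2^i)/2^{(d-1)i}<\infty$ and the freedom to take $\CC$ very large are essential.
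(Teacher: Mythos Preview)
Your Part~\ref{it:ShortAug} argument is essentially the same as the paper's: grow alternating layers from an unmatched $\CA$-vertex, use Part~\ref{it:StrongHall} to force $d$-th-root growth of at least a fixed constant per step, reach half of $\CB\cap R$ in fewer than $N/2$ double-steps, run the symmetric process from the $\CB$-side, and splice. (A minor slip: $|R|\le N^d$, not $\rho^{d-1}N^d$; this only helps you.)

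Part~\ref{it:StrongHall} has a genuine gap. Your volume claim $|Y|\ge|X|+c_1(d)\cdot\CC\cdot p^R(X)$ for $Y=\dist_{\le\CC}(X)\cap R$ is false. Take $X=\CA\cap R_0$ for a sub-rectangle $R_0$: since $\CA$ has density $\delta$, the set $X$ is ``noisy'' at unit scale and $p^R(X)$ is of order $|X|$, not $|X|^{(d-1)/d}$, while $|Y\setminus X|\approx(1-\delta)|R_0|+O(\CC\,|R_0|^{(d-1)/d})$. For large $R_0$ the left side is $\approx(1-\delta)|X|/\delta$, far smaller than $c_1\CC\cdot|X|$. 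The same phenomenon kills your discrepancy step: a dyadic decomposition of $Y$ controls $|\CB\cap Y|-\delta|Y|$ by $O(p(Y))$, not by $O(|X|^{(d-1)/d})$; and $p(Y)$ is not bounded in terms of $|X|^{(d-1)/d}$ for general $X$ (think of $X$ a union of many $2\CC$-separated singletons). Finally, even granting both estimates, you are comparing $\delta|Y|$ with $|X|$, but $X\subseteq\CA$ only gives $|X|\le\delta|Y|+D_\delta(\CA;Y)$, so the $\CA$-discrepancy enters too and you have no net gain.

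The paper's remedy is to introduce an \emph{intermediate} scale $\CC_1$ with $1\ll\CC_0\ll\CC_1\ll\CC$ and work with smoothed sets $X_1\subseteq X_2$: $X_1$ is the union of the grid $\CC_1$-cubes meeting $X$, and $X_2$ adds the adjacent cubes. Now the cube structure genuinely gives $|X_2\setminus X_1|\ge\frac{\CC_1}{2d}\,p^R(X_i)$, and one compares $|\CB\cap X_2|$ with $|\CA\cap X_1|$ (not $|X|$), so the main term is $\delta|X_2\setminus X_1|$ while the error is $\CC_0(p(X_1)+p(X_2))$ via Laczkovich's discrepancy bound $D_\delta(\,\cdot\,;Y)\le\CC_0\,p(Y)$. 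Lemma~\ref{lm:boundaries} (or a direct argument when $|X_i|>\tfrac23|R|$) then gives $p^R(X_i)\ge p(X_i)/\CC_0$, and choosing $\CC_1\gg\CC_0^2/\delta$ makes the gain dominate. Lemma~\ref{lm:Isop} applied to $X_2$ finally converts $p(X_2)$ into $2d|X|^{(d-1)/d}$. The two-level smoothing $X\subseteq X_1\subseteq X_2$ at scale $\CC_1$ is the missing idea.
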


\begin{proof} Given $d$, $\delta$, $\rho$, and $\Phi$, choose sufficiently large integers 
in the order 
$\CC_0\ll \CC_1\ll\CC$. 

Since $\sum_{i=0}^\infty \Phi(2^i)/2^{(d-1)i}<\infty$,
Theorem~1.2 in  Laczkovich~\cite{laczkovich:92}  shows 
that, for every $X\subseteq \I Z^d$ which is $\Phi$-uniform of density $\delta$, 
 we have
 \beq{eq:DAlpha}
 D_\delta(X;Y)\le \CC_0\, p(Y),\quad\mbox{for all finite $Y\subseteq\I Z^d$}.
 \eeq
  Note that 
the coefficient $\CC_0=\CC_0(d,\delta,\rho,\Phi)$ in (\ref{eq:DAlpha}) 
does not depend on the choice of~$X$ and~$Y$. (The proof of~\eqref{eq:DAlpha} 
in~\cite{laczkovich:92} proceeds 
by
representing each $Y$ as a certain combination of binary cubes having the 
appropriately defined ``complexity'' at most 
$\CC_0\, p(Y)$.)
 \hide{
 In brief, the proof of (\ref{eq:DAlpha}) proceeds as follows. Theorem 
1.3 in
\cite{laczkovich:92} shows  that  for every finite $Y\subseteq \I Z^d$  
there are cubes $Q_1,\dots,Q_n$ such that, for every $j\in\I N$, this sequence 
has at most  $\CC_1 p(Y)/2^{(d-1)j}$
$2^j$-cubes, and we can recursively construct $Y$ from these cubes where at 
each step we take the union of two disjoint sets or
subtract a subset from a set. Furthermore, each cube $Q_j$ is used at most once; thus it follows
that 
 $$
  D_{\delta}(X;Y) \le \sum_{t=1}^n D_{\delta}(X;Q_t)\le \sum_{j=0}^\infty 
\frac{\CC_1p(Y)}{2^{(d-1)j}}\cdot \Phi(2^j)
\le \CC_0\, p(Y).
 $$
 }

Next, take arbitrary $\CA,\CB,R\subseteq \I Z^d$ as in the statement of the lemma. 
Assume that $N>\CC$ for otherwise $\C F$ is a complete bipartite graph and the lemma
trivially holds.

We prove Part~\ref{it:StrongHall} of the lemma first. Fix an arbitrary 
set 
$X\subseteq \CA\cap R$ with $2\, |E(X)|< |\CB\cap R|$.

Very roughly, the proof proceeds as follows. We define
some ``smoothed'' versions $X_1$ and $X_2$ of $X$, where for illustration
purposes one can imagine $X_i$ as $\dist_{\le i\CC_1}(X)\subseteq 
\I Z^d$, the $(i\CC_1)$-ball around $X$.  Then every point of $\CB\cap X_2$ is 
a neighbour of $X$. Also, if the boundaries of $X_1$ and $X_2$ are ``smooth
on the scale of $\CC_1$'', then we expect that $|X_2\setminus X_1|\ge \Omega\big(\CC_1 
(p(X_1)+p(X_2))\big)$. On the other hand, by~\eqref{eq:DAlpha} the number of points
from $\CA$ and $\CB$ inside $X_i$
deviates from the expected value $\delta|X_i|$ by at most $\CC_0\,p(X_i)$
which is much smaller than~$\CC_1\, p(X_i)$. This suffices 
to get the desired gap between $|E(X)|\ge |\CB \cap X_2|$ and 
$|X|\le |\CA\cap X_1|$. The above argument is from 
Laczkovich~\cite{laczkovich:92}.
However, here we have to confine all sets to $R$ (even if $X$ comes 
very close
to the boundary of~$R$). With an appropriate definition of $X_i$, 
the above argument can be applied if
$|X_i|<\frac23 |R|$ as then
$X_i$ has a positive fraction of its boundary in the interior of $R$ 
by Lemma~\ref{lm:boundaries}. Otherwise there is a large discord
between $|X_i|\ge \frac23 |R|$ and $|\CB\cap X_i|\le |E(X)|< 
|\CB\cap R|/2$, implying that $p(X_i)=\Omega(N^d)$; thus $p^R(X_i)\ge p(X_i)-p(R)=p(X_i)-O(N^{d-1})$
is essentially the same as $p(X_i)$ and the above argument still applies.
Let us give all details 
now.

For each $j\in [d]$, fix a partition $[a_j,a_j+r_j-1]=\cup_{i=1}^{t_j} I_{j,i}$ 
of the $j$-th side of $R$ into intervals of length $\CC_1$ and $\CC_1+1$. 
(For example, take 
$r_j\pmod{\CC_1}$ intervals of length $\CC_1+1$ that occupy at most 
$(\CC_1-1)(\CC_1+1)\le \CC/\rho\le r_j$ initial elements 
and split the rest into
intervals of length~$\CC_1$.)  Call each $d$-dimensional product 
$\prod_{j=1}^d I_{j,s_j}$ with $\V s\in \prod_{j=1}^d [t_j]$ a \emph{sub-rectangle}. Thus we have partitioned $R$ into an
(almost regular) grid made of sub-rectangles. We say that two sets $Y,Z\subseteq \I Z^d$ \emph{share boundary} if there is
$(\V m,\V n)\in \partial Y$ such that $(\V n,\V m)\in \partial Z$. By the grid structure,
each sub-rectangle can share boundary with at most $2d$ other sub-rectangles.

Let $X_1$ be the union of all sub-rectangles
that intersect $X$.  Let 
$X_2$ be obtained from $X_1$ by adding all sub-rectangles that share boundary with it.
Clearly, 
 $$X\subseteq X_1\subseteq X_2\subseteq R.
 $$
By Lemma~\ref{lm:Isop}, we have that $p(X_2)\ge2d\cdot |X_2|^{(d-1)/d}\ge 2d\cdot |X|^{(d-1)/d}$. Thus, in order to prove
the first part of the lemma, it is enough to show that 
 \beq{eq:aim3}
 |E(X)|\ge|X|+5\,p(X_2).
 \eeq

 First, let us show that 
 \beq{eq:B-A}
 |X_2\setminus X_1| \ge \frac{\CC_1}{2d}\, \left(\,p^R(X_1)+p^R(X_2)\,\right).
 \eeq
 Note that if $(\V n,\V n+\V e)$ is in $\partial^R X_1$ (resp.\
$\partial^R(R\setminus X_2)$),  then $\V n+ i\V e\in X_2\setminus X_1$
for all $i\in [\CC_1]$. (Indeed, the directed edge $(\V n,\V n+\V e)$ enters 
some sub-rectangle $R'\subseteq X_2\setminus X_1$ and it takes at least $\CC_1$ 
steps in that direction
before we leave~$R'$.) This way we encounter 
at least $\CC_1\,p^R(X_1)$ (resp.\
$\CC_1\,p^R(X_2)$) elements in $X_2\setminus X_1$ with each element counted
at most $2d$ times in total, which gives~(\ref{eq:B-A}).

Since we may assume that $2(\CC_1+1)\le \CC$, each element of $X_2$ is within 
distance $\CC$ from $X$;
thus $E(X)\supseteq \CB\cap X_2$. Also, by the construction of $X_1$, we have $X\subseteq \CA\cap X_1$. We
conclude by (\ref{eq:DAlpha})  and (\ref{eq:B-A}) that 
 \begin{eqnarray}
 |E(X)|-|X|&\ge& |\CB\cap X_2|-|\CA\cap X_1|\nonumber\\
  &\ge& \delta\, |X_2\setminus X_1| - \CC_0\big(p(X_1)+p(X_2)\big)\label{eq:HallDiff}\\
   &\ge &  \frac{\delta \CC_1}{2d}\, \big(p^R(X_1)+p^R(X_2)\big)
   - \CC_0\big(p(X_1)+p(X_2)\big).\nonumber
 \end{eqnarray}

Let $i=1$ or $2$. Let us show that $p^R(X_i)\ge p(X_i)/\CC_0$. 
This directly follows from Lemma~\ref{lm:boundaries}
if $|X_i|\le \frac23\, |R|$ since we can assume that $\CC_0\ge 9d\rho$.
So suppose that $|X_i|> \frac23\, |R|$. Since $\CB\cap X_i\subseteq E(X)$ has, by our
assumption, less than $|\CB\cap R|/2$ elements, we have by~\eqref{eq:DAlpha} applied twice to the 
$\Phi$-uniform set $\CB$ 
that
 \begin{eqnarray*}
 \CC_0\,p(X_i)
 &\ge& \delta|X_i|-|\CB\cap X_i|\ \ge\ \frac{2\delta|R|}{3}- \frac{|\CB\cap R|}2\\
 &\ge&  \frac{2\delta|R|}{3}-\frac{\delta|R|+\CC_0\,p(R)}2\ =\ \frac{\delta|R|}6-\frac{\CC_0\,p(R)}2.
 \end{eqnarray*}
 Since $|R|\ge (N/\rho)^d$ and $p(R)\le 2dN^{d-1}$ (and $N> \CC\gg \CC_0$), we conclude that 
 $$
  p^R(X_i)\ge p(X_i)-p(R)\ge p(X_i)/2,
 $$
 which is even stronger than claimed.

Now, by 
(\ref{eq:HallDiff})
and since $\CC_1\gg \CC_0$, we obtain the required
bound (\ref{eq:aim3}):
  $$
   |E(X)|-|X|\ge \left(\frac{\delta 
\CC_1}{2d\CC_0}-\CC_0\right)\big(p(X_1)+p(X_2)\big)\ge 5\,p(X_2).
   $$
  This finishes the proof of Part~1.

Now, we prove the second part of the lemma, assuming the validity of 
Part~\ref{it:StrongHall}.

Let $\CA_{0}\not=\emptyset$ consist of the unmatched points in $\CA\cap R$.  
Define an \emph{alternating path} as a sequence $(x_0,\dots,x_\ell)$ such that 
$x_0\in \CA_{0}$, $(x_{i-1},x_i)\in E\setminus 
\C M$ for all odd $i\in[\ell]$ and $(x_i,x_{i-1})\in \C M$ for all even 
$i\in[\ell]$ (i.e.\ it is a path in $\C F$ that
starts with an unmatched vertex of $\CA\cap R$ and alternates between unmatched 
and matched edges). 
Note that any alternating path that ends in an unmatched vertex is augmenting.
For $i\in\I N$, let $\CA_{i}$ be the set of endpoints of alternating paths 
whose length is at most $i$ and has the same parity
as $i$. (Thus $\CA_{i}\subseteq \CA\cap R$ for even $i$ and $\CA_{i}\subseteq 
\CB\cap R$ for odd $i$.)
Let $\CA_{i}'$
consist of vertices reachable by an alternating path of length $i$ but not by a 
shorter one (that is, 
$\CA_{i}':=\CA_{i}\setminus \CA_{i-2}$ for $i\ge 2$ and 
$\CA_{i}':=\CA_{i}$ for $i=0,1$).

Suppose on the contrary that there is no augmenting path of length at most $N$. 
Take an odd integer $\ell$
such that $N-3\le 2\ell+1\le N$.

Roughly, the proof proceeds as follows. It is easy to see that 
$\CA_{2i+1}=E(\CA_{2i})$ for
all $i\in \I N$ and the absence of short augmenting paths implies that $\C M$ 
covers 
all of $\CA_{2i+1}$ for $i\le \ell$. It follows from Part~1 that $|\CA_{i}|$ 
grows as $\Omega(i^d)$. By symmetry, the same applies when we grow alternating 
paths starting from~$\CB$. These two
processes have to collide in fewer than $N/2$ steps, giving a contradiction. Let us 
provide the details now. 

Clearly, if $i\le \ell$, then $\CA_{2i+1}'\subseteq \C M(\CA)$ (i.e.\ every 
vertex of $\CA_{2i+1}'\subseteq
\CB\cap R$ is matched) for otherwise we have a too short augmenting path. 
Furthermore, $\CA_{2i+2}'=\C M^{-1}(\CA_{2i+1}')$ (i.e.\
$\C M$ gives a bijection from $\CA_{2i+2}'$ to $\CA_{2i+1}'$).
Thus, by induction on $i$, 
we have that
 \beq{eq:XSizes} 
 |\CA_{2i+2}|=|\CA_{2i+1}|+|\CA_{0}|,\quad \mbox{for every $i\le \ell$}.
 \eeq

Next, let us show that 
 \beq{eq:Xell}
 |\CA_{\ell}|\ge |\CB\cap R|/2.
 \eeq

We may assume that $|\CA_{i}|< |\CB\cap R|/2$ for all odd $i\le \ell$, as 
otherwise we are done
since $\CA_{\ell}\supseteq \CA_{i}$. Take any integer $i\in[0,(\ell-1)/2]$.
Since $\CA_{2i+1} \supseteq E(\CA_{2i})$ (in fact, this is equality),
we have by Part~1
of the lemma that
$$
 |\CA_{2i+1}|\ge|E(\CA_{2i})|\ge |\CA_{2i}|+10\,d\cdot |\CA_{2i}|^{(d-1)/d}.
 $$ 
 If $i\ge 1$, then  (\ref{eq:XSizes}) implies that $|\CA_{2i}|\ge 
|\CA_{2i-1}|$. Thus the sizes 
$a_i:=|\CA_{2i-1}|$
satisfy $a_1\ge 1$ and 
 \begin{equation}\label{eq:aineq}
  a_{i+1}\ge a_i+10\,d\cdot a_{i}^{(d-1)/d},\quad\mbox{for all 
$i\in[(\ell-1)/2]$.}
 \end{equation}
 Let an integer $\gamma=\gamma(d)$ be such that $\gamma^{j-1}\ge 5^{j-1}{d\choose j}$ for all $j\in [2,d]$; for example, we can take $\gamma=5\cdot 2^d$.
 We obtain that $a_{i+1}\ge a_i+1$ and, if $a_i\ge c^d$ 
with $c\ge \gamma$ then we have by~\eqref{eq:aineq} 
 $$
 a_{i+1}\ge c^d+10\,d\cdot c^{d-1} \ge c^d+ 5\,d\cdot c^{d-1}+ \sum_{j=2}^d 
5\,c^{d-1}
\ge \sum_{j=0}^d {d\choose j}\,5^j\,c^{d-j}=(c+5)^d.
 $$
 
  We conclude by induction on $i$  that
 $a_{i+\gamma^d}\ge (5\,i+\gamma)^d$ for every 
 $i\in[0,(\ell-1)/2-\gamma^d]$. But 
 then $a_i$ is larger than $N^d\ge |R|$ for some $i\le 
 N/5+\gamma^{d}\le(\ell-1)/2$. (Recall
 that $d\ll \CC<N\le 2\ell+4$.)
This contradiction proves~(\ref{eq:Xell}).

Likewise, by swapping the role of the sets $\CA$ and $\CB$ in the previous 
argument
(and extending the definition of an alternating path accordingly), 
let $\CB_{i}$ consist of vertices reachable by alternating paths that start in 
an unmatched vertex of $\CB\cap R$ and whose length is at most
$i$ and of  the same parity as $i$. As above, we conclude that 
$|\CB_{\ell}|\ge |\CA\cap R|/2$. Assume without loss of generality that 
$|\CB\cap R|\ge |\CA\cap R|$.
By (\ref{eq:XSizes}) and (\ref{eq:Xell}), we have that 
 $$|\CA_{\ell+1}|=|\CA_{\ell}|+|\CA_{0}|> |\CA_{\ell}|\ge |\CB\cap R|/2\ge 
|\CA\cap R|/2.
  $$ 
Thus $\CB_{\ell}$, which occupies at least half of $\CA\cap R$, intersects 
$\CA_{\ell+1}$. 

Take two alternating paths $P:=(x_0,\dots,x_s)$ and $P':=(y_0,\dots,y_t)$
such that $x_0\in \CA$, $y_0\in \CB$, $x_s=y_t$, and $s+t$ is smallest possible;
if there is more than one choice, let $s$ be maximum. Clearly, $s+t$ is odd 
and, by
above, it is at most $2\ell+1\le N$. Thus we cannot have $t=0$ for otherwise 
$P$ is a short augmenting path 
contradicting our assumption. Since $s$ was
maximum, we cannot append $y_{t-1}$ to $P$ and still have an alternating path. 
By the parity of $s+t$, the only possible reason is that
$y_{t-1}$ is equal to some $x_j$ with $s-j$ being a positive odd integer. But 
then $(x_0,\dots,x_{j})$ and $(y_0,\dots,y_{t-1})$ are alternating paths that 
contradict the choice of the pair $(P,P')$, namely,
the minimality of $s+t$.
This final contradiction proves Lemma~\ref{lm:ShortAugAll}.\end{proof}

\section{Proof of Part~\ref{it:Baire} of  Theorem~\ref{th:Suff}}\label{Baire}

In this section, let \emph{measurable} mean Baire measurable.

Given 
the function $\Phi:\{2^i\mid i\in\I N\}\to\I R$, 
the vectors $\V x_1,\dots,\V x_d\in \I T^k$, and the 
sets $A,B\subseteq \I 
T^k$
as in Part~\ref{it:Baire} of Theorem~\ref{th:Suff}, 
let $\MD$ be sufficiently large and then let 
$r_1<r_2<\dots$ be
a fast-growing sequence of integers. (Namely,  for each~$i$, the constant $\MD$ and the subsequence $(r_1,\dots,r_i)$ 
have to satisfy 
Lemma~\ref{lm:Baire}.) 

Given our choice of $\MD$, consider the usual bipartite graph $\C G=(A,B,E)$
as defined in~\eqref{eq:CG}; namely, 
$E=\{(\V a,\V b)\in A\times B\mid \V a-\V b\in\VV{\V x_1,\dots,\V x_d}{\MD}\}$. Thus we would like to find a measurable
perfect matching 
$\C M$ in $\C G$.
We will use essentially the same construction
that appears in Marks and Unger~\cite{marks+unger:16}. However, the 
analysis of its
correctness for our problem is much more complicated. 

Recall that a set $X\subseteq \I T^k$ is called \emph{$r$-sparse} if no 
$X_{\V u}$ contains two  distinct vectors at $L^\infty$-distance 
at most $r$. For every integer $i\ge 1$, choose Baire sets 
$A_i\subseteq A$ and $B_i\subseteq B$ such that $A_i\cap B_i=\emptyset$, 
$A_i\cup B_i$ is $(r_i+4\MD)$-sparse and the sets $A':=A\setminus 
(\cup_{i=1}^\infty A_i)$ and $B':=B\setminus(\cup_{i=1}^\infty A_i)$ are 
meagre. For notational convenience, let us further agree that 
$A_{2i+1}=B_{2i}=\emptyset$ for all $i\in \I N$, which will 
automatically take care
of the disjointedness requirement.
The existence of such sets is easy to establish. For example, 
let $\{\V y_i\mid i\in \I N\}$ be a dense subset of $\I T^k$ and let $A_{2i}$ 
(resp.\ $B_{2i+1}$)
be the intersection of $A$ (resp.\ $B$) with a ball 
centred at $\V y_i$ of sufficiently small radius (namely, so that the ball is
$(r_{2i+1}+4\MD)$-sparse). The 
closure of $A'\cup B'$ cannot contain a non-empty open set $U$
because it avoids a ball around some $\V y_i\in U$. Thus
$A'\cup B'$ is in fact nowhere dense.

In order to prove Theorem~\ref{th:Suff}, it is enough to find
measurable nested matchings $\C M_1\subseteq 
\C 
M_2\subseteq \dots$ such that for all $i\ge 1$ we have
  \begin{equation}\label{eq:CoversUi}
 \C M_{i}^{-1}(B)\supseteq A_i\quad\mbox{and}\quad  \C 
M_{i}(A)\supseteq B_i.
 \end{equation}
 Indeed, suppose that such matchings $\C M_i$ exist. Let $\C 
M:=\cup_{i=1}^\infty \C M_i$. Let $X\subseteq \I T^k$ consist of 
the translates of the meagre set $A'\cup B'$ by integer
combinations of $\V x_1,\dots,\V x_d$. Then $X$ is a meagre
invariant set and the restriction of $\C M$ to 
$\C G[\,A\setminus X,B\setminus X\,]$ is a perfect matching by~\eqref{eq:CoversUi}.
If we replace the restriction of the injection 
$\C M:A\to B$ to $A\cap X$ by the one provided by Theorem~\ref{th:LSuff}
(with respect to the same vectors $\V x_1,\dots,\V x_d$), then
we obtain the required measurable perfect matching in $\C G$.

The following lemma is a special case of the inductive step in Marks and 
Unger~\cite{marks+unger:16}, slightly adopted to our purposes. A matching 
$\C 
M$ in a graph $G$ is called \emph{$G$-extendable} (or \emph{extendable} when
$G$ is understood) if 
$G$ has 
a (not 
necessarily measurable) perfect matching $\C M'\supseteq \C M$.

\begin{lemma}\label{lm:induction} Under the assumptions of this
section, for every $i\ge 1$ and every measurable $\C G$-extendable 
matching $\C M_{i-1}$, there is a measurable matching 
$\C M_i\supseteq \C M_{i-1}$ satisfying~\eqref{eq:CoversUi} and the 
following 
properties.
 \begin{enumerate}
  \item\label{it:sparse} The added edge-set is $(r_i+2\MD)$-sparse, 
that is, for every $\V u\in\I T^k$ and 
distinct $(\V a,\V b),(\V x,\V y)\in 
(\C M_i\setminus\C M_{i-1})_{\V u}$, 
the distance between the sets $\{\V a,\V b\}$ and $\{\V x,\V y\}$
(as defined in~\eqref{eq:SetDistance}) is larger
than $r_i+2\MD$.
 \item\label{it:extendable} $\C M_i\setminus \C M_{i-1}\subseteq \C 
M_{i-1}^\equiv$, where
$\C M_{i-1}^\equiv$ consists of those $(\V a,\V b)\in E\setminus\C M_{i-1}$ 
such that
$\C M_{i-1}\cup\{(\V a,\V b)\}$ is a $\C G$-extendable matching. 
 \end{enumerate}
\end{lemma}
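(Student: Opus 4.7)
The plan is to add, in a single parallel pass, one edge per unmatched vertex of $A_i\cup B_i$, exploiting the fact that $A_i\cup B_i$ is $(r_i+4\MD)$-sparse so that these local decisions do not interfere within any coset. Concretely, for each $\V u\in\I T^k$ and each $\V v\in(A_i\cup B_i)_{\V u}$ left unmatched by $\C M_{i-1,\V u}$, a local rule will scan the ball of radius $\MD$ around $\V v$ and return a canonical neighbor $\V w$ of $\V v$ in $\C G_{\V u}$; the resulting oriented pair is adjoined to $\C M_{i-1}$ to form $\C M_i$. By Lemma~\ref{lm:local} together with the Baire measurability of $A$, $B$, $A_i$, $B_i$ and (inductively) $\C M_{i-1}$, the output will be Baire measurable.

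Existence of a suitable $\V w$ is automatic from $\C G$-extendability of $\C M_{i-1}$: if $\C M^{\ast}\supseteq \C M_{i-1}$ is any perfect matching in $\C G$, then $\V w:=\C M^{\ast}(\V v)$ (or the $\C M^{\ast}$-preimage, according to whether $\V v\in A_i$ or $\V v\in B_i$) is unmatched by $\C M_{i-1}$, lies within $\MD$ of $\V v$, and the singleton extension $\C M_{i-1}\cup\{(\V v,\V w)\}$ is extendable, as witnessed by $\C M^{\ast}$. The real difficulty --- and the step I expect to require all of the technical work of the section --- is that $\C G$-extendability is a global property, so membership in $\C M_{i-1}^\equiv$ is not literally determined by the $\MD$-ball around $\V v$. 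A naive ``pick the lex-smallest unmatched neighbor'' rule might silently select an edge whose addition destroys extendability.

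My plan to overcome this is to invoke the forthcoming Lemma~\ref{lm:Baire} (implicitly referenced in the preamble of this section via the choice of $r_1<r_2<\dots$), which should supply a sufficient, locally-checkable condition that certifies $(\V v,\V w)\in\C M_{i-1}^\equiv$, at the cost of excluding a meager invariant set $X\subseteq\I T^k$. On $\I T^k\setminus X$, the local rule picks the lexicographically smallest neighbor of $\V v$ that is unmatched by $\C M_{i-1}$ and passes the local test; the analysis above guarantees that such a neighbor always exists, since $X$ is meager and the true $\V w:=\C M^{\ast}(\V v)$ witnesses the test. On $X$ itself, I would fall back to the bijection supplied by Theorem~\ref{th:LSuff} applied with the same vectors $\V x_1,\dots,\V x_d$; because $X$ is meager and invariant under the $\I Z^d$-action generated by these vectors, the resulting composite extension remains Baire measurable.

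Verification of the three required properties is then routine. Property~\ref{it:sparse} follows from the choice of sparsity parameter for $A_i\cup B_i$: if $(\V a,\V b)$ and $(\V x,\V y)$ are distinct new edges in a common coset, then the two ``centers'' (the endpoints lying in $A_i\cup B_i$) are at distance exceeding $r_i+4\MD$, while their partners lie within $\MD$ of the respective centers, so by the triangle inequality every pairwise distance between the four points exceeds $r_i+2\MD$. Property~\ref{it:extendable} is built into the local rule by design. The coverage condition~\eqref{eq:CoversUi} is immediate since every vertex of $A_i\cup B_i$ is either already matched by $\C M_{i-1}$ or explicitly paired by the new rule. The entire burden of the argument therefore rests on Lemma~\ref{lm:Baire}, which I expect to prove next.
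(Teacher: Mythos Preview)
Your plan correctly isolates the real issue---extendability is global, so membership in $\C M_{i-1}^\equiv$ is not determined by any fixed-radius ball---but your proposed workaround misreads what Lemma~\ref{lm:Baire} does. That lemma is a purely combinatorial statement: given that the matchings $\C M_0\subseteq\dots\subseteq\C M_i$ already satisfy the sparsity and single-edge-extendability hypotheses, it concludes that $\C M_i$ is extendable. It does not furnish a locally checkable certificate for ``$(\V v,\V w)\in\C M_{i-1}^\equiv$'', nor does it produce any meagre exceptional set. Invoking it inside the present lemma is circular in spirit: Lemma~\ref{lm:Baire} is what lets you \emph{iterate} Lemma~\ref{lm:induction}, not what makes a single step work. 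Likewise, the fallback to Theorem~\ref{th:LSuff} on a meagre set belongs to the final cleanup after all the $\C M_i$ are built, not inside the construction of one $\C M_i$; splicing in an arbitrary bijection on a meagre set here would destroy the sparsity and extendability conclusions you are trying to establish.

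The idea you are missing is a compactness argument. In the locally finite graph $\C G_{\V x}$, the matching $\C M_{i-1,\V x}\cup\{(\B 0,\V n)\}$ extends to a perfect matching if and only if, for every $j\in\I N$, it extends to a matching covering all vertices within distance $j$ of $\B 0$ (this is K\H{o}nig's lemma, or a direct diagonalisation). Each fixed-$j$ condition \emph{is} determined by the ball of radius $j+\MD$, hence by a local rule and thus Baire measurable by Lemma~\ref{lm:local}. So the set $Y(\V x)=\{\V y:(\V x,\V y)\in\C M_{i-1}^\equiv\}$ is a countable intersection of Baire-computable sets. To select one element of $Y(\V x)$ measurably, fix a Borel $2\MD$-sparse colouring $\chi:\I T^k\to[t]$ from Lemma~\ref{lm:KST} and take the $\V y\in Y(\V x)$ of minimal colour; the resulting sets $Z_{\V v,m}=\{\V x: \C M_i(\V x)=\V x+\V v,\ \chi(\V x+\V v)=m\}$ are then shown measurable by induction on $m$. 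No exceptional set and no appeal to Lemma~\ref{lm:Baire} is needed at this stage.
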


\begin{proof} By the symmetry between $A$ and $B$, assume that, for example, $i$
is even. Since $B_i=\emptyset$, 
we just need to match every vertex of $X:=A_i\setminus\C M_{i-1}^{-1}(B)$
in order to satisfy~\eqref{eq:CoversUi}.
By the measurability of $\C M_{i-1}$, the set $X\subseteq \I T^k$ is Baire.

For $\V x\in X$,  let $Y(\V x):=\{\V y\in E(\V x)\mid (\V x,\V y)\in\C 
M_{i-1}^\equiv\}$ consist of those
neighbours $\V y$ of $\V x$ for which
$\C M_{i-1}\cup \{(\V x,\V y)\}$ 
is an extendable matching. This set is non-empty by the assumed
extendability of $\C M_{i-1}$.
Furthermore, for $j\in\I N$, let $Y_j(\V 
x)$ 
consist of those $\V y\in E(\V 
x)$ 
 such that $\C M_{i-1}\cup \{(\V x,\V y)\}$ can be
extended to a matching that covers all vertices of $\C G$
(in both parts) at the coset 
distance 
at most $j$ from $\V x$; namely we require that $\C G_{\V x}$
has a matching $\C M\supseteq \C M_{i-1,\V x}$ such that
 \begin{equation}\label{eq:CM'}
 \C M(A_{\V x})\supseteq \dist_{\le j}(\B 0)\cap B_{\V x}\quad \mbox{and}
 \quad \C M^{-1}(B_{\V x})\supseteq \dist_{\le j}(\B 0)\cap A_{\V x}.
 \end{equation}
 Clearly, $Y(\V x)\subseteq \cap_{j\in \I N} Y_j(\V x)$. The converse inclusion
holds by the Compactness Principle (or a direct diagonalisation argument) 
applied to the locally finite graph $\C G_{\V x}$. Thus $Y(\V x)=\cap_{j\in \I N} 
Y_j(\V x)$. 


Fix some Borel 
map $\chi:\I T^k\to[t]$ 
with $2\MD$-sparse pre-images provided by Lemma~\ref{lm:KST}.
Let $\C M_i$ be obtained from $\C M_{i-1}$ by adding, for each $\V x\in X$,
the pair $(\V x,\V y)$, where $\V y$ is the element of $Y(\V x)\not=\emptyset$
with the smallest value of $\chi$.
Since $X\subseteq 
A_i$ is 
$(r_i+4\MD)$-sparse,
$\C M_i$ satisfies Part~1 (in particular, $\C M_i$ is a matching). 
Clearly, Part~2 and~\eqref{eq:CoversUi} hold by the definition of $\C M_i$.

Thus it remains to verify the measurability of $\C M_i$. To this end, it is 
enough to prove that for every $\V v\in \VV{\V x_1,\dots,\V x_d}{\MD}$ the
set 
 $
  Z_{\V v}:=\{\V x\in X\mid (\V x,\V x+\V v)\in \C M_i\setminus\C M_{i-1}\}
 $
 is measurable. Trivially, $Z_{\V v}$ is the union over $m\in[t]$ of 
  $$
  Z_{\V v,m}:=\{\V x\in Z_{\V v}\mid 
\chi(\V x+\V v)=m\}.
 $$ 
 
 We prove by induction on $m=1,\dots,t$ that, for each $\V v\in\VV{\V 
x_1,\dots,\V 
x_d}{\MD}$, the set $Z_{\V v,m}$ is measurable (which
 will finish the proof of the lemma).
For the base case, note that $Z_{\V v,1}$ consists exactly of those $\V x$ in
the translated Borel set
$\chi^{-1}(1)-\V v$ such that for 
every radius~$j$ there exists $\C M\supseteq \C M_{i-1,\V x}$ satisfying~\eqref{eq:CM'}.
The latter property, for any given $j$, is clearly 
determined by the picture inside the ball $\dist_{\le j+\MD}(\B 0)$ 
in the coset of $\V x$ and
can be checked by a 
$(j+\MD)$-local rule. Thus each $Z_{\V v,1}$ 
is measurable by Lemma~\ref{lm:local}. The measurability of $Z_{\V v,m}$
for $m\ge 2$ follows by induction: the formula for $Z_{\V v,m}$ is the
trivial adaptation of that for $Z_{\V v,1}$ except we additionally have to
exclude $\cup_{q=1}^{m-1}\cup_{\V 
w\in \VV{\V x_1,\dots,\V x_d}{\MD}} Z_{\V w,q}$, the set of vertices $\V x\in X$
for which $Y(\V x)$ has an element whose $\chi$-value is smaller than~$m$.
(We do not need to worry that the matches of $X$ in $B$ may collide as they are automatically distinct
by the $2\MD$-sparseness of $X$.)\end{proof}

Armed with Lemma~\ref{lm:induction}, we can now describe how we construct 
the desired matchings.
We start with 
the empty matching $\C M_{0}$ 
(which is $\C G$-extendable by Theorem~\ref{th:LSuff}) and try to iteratively 
apply 
Lemma~\ref{lm:induction} for $i=1,2,\dots$, constructing nested
measurable matchings $\C M_1\subseteq \C M_2\subseteq \dots$ in $\C G$ that 
satisfy~\eqref{eq:CoversUi}. If each new matching $\C M_i$ is extendable,
then Lemma~\ref{lm:induction} can always be applied, giving the proof of
Theorem~\ref{th:Suff} as discussed above.

\renewcommand{\CC}{M}

The extendability of each $\C M_i$ directly follows from Lemma~\ref{lm:Baire} 
below (when applied to the $\Phi$-uniform sets
$\CA:=A_{\V u}$ and $\CB:=B_{\V u}$ for each $\V u\in \I T^k$) since, clearly, 
it is 
enough to verify extendability inside each coset.
Lemma~\ref{lm:Baire} (like 
Lemma~\ref{lm:ShortAugAll}) is a purely combinatorial
statement with a rather long proof.

\renewcommand{\CF}{{\C F}}

\begin{lemma}\label{lm:Baire} For every integer $d\ge 1$, real
$\delta>0$, and function $\Phi:\mbox{$\{2^i\mid i\in\I N\}$}\to \I R$ 
satisfying $\sum_{i=0}^\infty \Phi(2^i)/2^{(d-1)i}<\infty$,
there is $\CC=\CC(d,\delta,\Phi)$ such that the 
following holds.

Let $\CA,\CB\subseteq \I Z^d$ be 
$\Phi$-uniform sets of density $\delta>0$. Let positive integers 
$r_1\le \dots\le r_i$ 
satisfy 
 \begin{equation}\label{eq:ri}
 \sum_{j=1}^i\, (\CC/r_j)^{(d-1)/d}\le 4^{1-d}.
 \end{equation}
 Let $\C M_0:=\emptyset\subseteq\C M_1\subseteq \dots \subseteq \C M_i$
be matchings in the bipartite graph 
 $$
  \CF:=\big(\,\CA,\CB,\{(\V a,\V b)\in 
\CA\times \CB\mid \|\V a-\V b\|_\infty\le \CC\}\,\big)
 $$
 such that
$\CN_j:=\C M_j\setminus \C M_{j-1}$ is $(r_j+2\CC)$-sparse for each $j\in[i]$,
$\C M_i\not=\emptyset$, 
and
$\C M_{i-1}\cup \{(\V a,\V b)\}$ is $\CF$-extendable 
for every 
$(\V a,\V b)\in\C 
M_i$. Then $\C M_i$ is $\CF$-extendable.
\end{lemma}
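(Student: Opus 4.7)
The plan is to argue by contradiction via Rado's theorem: since $\CF$ is locally finite, $\C M_i$ is $\CF$-extendable if and only if Hall's marriage condition holds on every finite subset of either side of the residual graph $\CF - V(\C M_i)$. Suppose to the contrary that Hall fails, witnessed (without loss of generality) by a finite $X\subseteq\CA\setminus V(\C M_i)$ with $|N_\CF(X)\setminus V(\C M_i)|\le |X|-1$, where $N_\CF$ denotes the neighborhood in $\CF$. I will combine a global expansion property of $\CF$ with the sparseness of the $\CN_j$'s to derive a contradiction.

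The first pillar is a global strong-Hall expansion: for $\CC=\CC(d,\delta,\Phi)$ sufficiently large, every finite $X\subseteq\CA$ (symmetrically for $\CB$) satisfies
\[
|N_\CF(X)|\ge |X|+c_d\,|X|^{(d-1)/d}
\]
for some absolute $c_d>0$. This is a translation-invariant analogue of Part~\ref{it:StrongHall} of Lemma~\ref{lm:ShortAugAll} and is proved in the same way: form grid-smoothings $X\subseteq X_1\subseteq X_2$ at mesh $\CC_1\ll\CC$, observe $N_\CF(X)\supseteq\CB\cap X_2$ and $X\subseteq\CA\cap X_1$, bound both intersections via the discrepancy inequality~\eqref{eq:DAlpha}, use $|X_2\setminus X_1|\ge\frac{\CC_1}{2d}(p(X_1)+p(X_2))$ from the grid construction, and apply Lemma~\ref{lm:Isop} to get $p(X_2)\ge 2d\,|X|^{(d-1)/d}$. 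The absence of a bounding rectangle actually simplifies the argument relative to Lemma~\ref{lm:ShortAugAll}.

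Writing $B:=N_\CF(X)\cap V(\C M_i)\cap\CB$ for the set of \emph{blockers}, the Hall violation forces $|B|>c_d\,|X|^{(d-1)/d}$. Decompose $B=\sqcup_{j=1}^i B_j$ according to which $\CN_j$ contains the $\C M_i$-match of each blocker. For each $b\in B_j$ choose a shadow $\pi(b)\in X$ with $\|b-\pi(b)\|_\infty\le\CC$; the $(r_j+2\CC)$-sparseness of $\CN_j$ makes $\pi$ injective on $B_j$ and $\pi(B_j)$ an $r_j$-separated subset of $X$. A packing argument---exploiting that $\pi(B_j)$ lies in a boundary shell of $X$ and that the $\Phi$-uniformity of $\CA$ controls $|\CA\cap\dist_{\le r_j}(X)|$ in terms of $|X|$ and $p(X)$---will give $|B_j|\le C_d(\CC/r_j)^{(d-1)/d}|X|^{(d-1)/d}$. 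Summing and applying the hypothesis~\eqref{eq:ri} then yields
\[
|B|=\sum_{j=1}^i|B_j|\le C_d\,|X|^{(d-1)/d}\sum_{j=1}^i\left(\CC/r_j\right)^{(d-1)/d}\le C_d\cdot 4^{1-d}\,|X|^{(d-1)/d},
\]
and choosing $\CC$ large enough that $c_d>C_d\cdot 4^{1-d}$ contradicts the lower bound on $|B|$.

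The hard part will be establishing the packing bound $|B_j|\le C_d(\CC/r_j)^{(d-1)/d}|X|^{(d-1)/d}$: \emph{a priori} an $r_j$-separated subset of $X$ can have up to $|X|$ elements, so one must genuinely exploit that the shadows $\pi(B_j)$ originate from $\CB$-vertices adjacent to but distinct from $X$ (hence lie in a shell of thickness $\CC$ around the boundary) in order to convert a volume packing into a perimeter-scale count. The role of the hypothesis that $\C M_{i-1}\cup\{e\}$ is $\CF$-extendable for every $e\in\C M_i$ is twofold: it certifies the base case---namely that $\CF$ has a perfect matching at all, so the global expansion argument is not vacuous---and, in the inductive regime, it ensures that any genuine Hall failure for $\C M_i$ cannot be blamed on a single edge of $\CN_i$, so the violation must arise from the combined effect of many $r_j$-sparse edges, exactly the regime that the condition~\eqref{eq:ri} is tailored to preclude.
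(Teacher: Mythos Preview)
Your packing bound $|B_j|\le C_d(\CC/r_j)^{(d-1)/d}|X|^{(d-1)/d}$ is false as stated, and this is not a technicality that a cleverer argument will repair. Take $X$ to be the set of $\C M_i$-unmatched $\CA$-vertices in a cube of side $N\gg r_j$. Every $\CN_j$-edge $(\V a,\V b)$ with $\V b$ inside the cube has $\V b\in N_\CF(X)$ (since $\Phi$-uniformity forces unmatched $\CA$-points within distance $\CC$ of $\V b$), so $|B_j|$ is of order $(N/r_j)^d$, while your claimed bound is of order $N^{d-1}/r_j^{(d-1)/d}$. The mistake is in what you are counting: an edge $(\V a,\V b)\in\CN_j$ with both $\V a$ and $\V b$ deep inside the smoothing $X_1$ removes $\V b$ from the $\CB$-neighbourhood but simultaneously removes $\V a$ from the $\CA$-side of the expansion inequality $|\CB\cap X_2|-|\CA\cap X_1|$, so it costs nothing. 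The only edges that genuinely erode the expansion are those with $\V b\in X_2$ but $\V a\notin X_1$, i.e.\ $\V a$ lies in a ``hole'' of $X_1$. Your shadows $\pi(B_j)$ need not lie in any boundary shell; only this much smaller subset does.

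Restricting attention to the genuinely harmful edges still does not yield your bound directly, because a hole $H$ with small perimeter can absorb such an edge while contributing almost nothing to $p(X_1)$. The paper handles this by a dichotomy: either every hole has perimeter at least $(r_i/\CC)^{(d-1)/d}$ (``rich''), in which case a private-set argument on $\partial X_1$ gives $|\Lambda_j|\le p(X_1)\,(\CC/r_j)^{(d-1)/d}$ and \eqref{eq:ri} finishes; or some finite hole $H$ is not rich, and then one \emph{fills} $H$---enlarging $X$ to $X':=X\cup(\CA\cap\dist_{\le\CC}(H))$ and shrinking the matchings off $H$---showing that $|\Gamma_i(X)|-|X|\ge|\Gamma_i'(X')|-|X'|$. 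This reduction is where the hypothesis ``$\C M_{i-1}\cup\{(\V a,\V b)\}$ is extendable for every $(\V a,\V b)\in\C M_i$'' does real work: the non-rich hole can meet at most one edge of $\CN_i$ (by sparseness and the perimeter bound), and one needs a perfect matching containing $\C M_{i-1}$ together with that \emph{specific} edge to build the injection witnessing $|X'\setminus X|\ge|\Gamma_i'(X')\setminus\Gamma_i(X)|$. Your description of the hypothesis as ``certifying the base case'' and ruling out single-edge failures is too coarse to see this; it is invoked edge-by-edge inside an induction on the number of non-rich holes, and without it the hole-filling step does not go through.
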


\begin{proof} Given $d$, $\delta$, and $\Phi$ as above, let  $\CC_0\ll\nolinebreak \CC$ be sufficiently 
large integers. For convenience, assume that $\CC$ is 
a power of $2$. 
 By our assumption on~$\Phi$, we can also require
that 
$\Phi(\CC/2)<\delta (\CC/2)^d$. As in the proof of 
Lemma~\ref{lm:ShortAugAll}, assume that
$\CC_0$ satisfies~\eqref{eq:DAlpha}.

For $\ell\in\I N$ and $X\subseteq \I Z^d$, we say that $\V y,\V z\in\I Z^d$ are 
\emph{$(\ell,X)$-connected} if
we can find a (possibly empty) sequence 
$\V x_1,\dots,\V x_n$ of elements of $X$ 
such that, letting $\V x_0:=\V y$ and $\V x_{n+1}:=\V z$, we have that 
$\dist(\V x_{j-1},\V x_{j})\le \ell$ for all $j\in[n+1]$. In other words,
we can travel from $\V y$ to $\V z$ via $X$ using \mbox{\emph{$\ell$-jumps}} (i.e.\ 
steps 
of $L^\infty$-distance at most $\ell$). 
The set $X$ is \emph{$\ell$-connected} if every $\V x,\V x' \in X$ are 
$(\ell,X)$-connected. 
The binary relation of being 
$(\ell,X)$-connected when restricted
to $X$ is clearly an equivalence relation.  Its equivalence classes will be 
called \emph{$\ell$-components} of~$X$.
Equivalently, an $\ell$-component is just a maximal $\ell$-connected subset of $X$. 

For $j\in [0,i]$, we let $\CA_{j}:=\CA\setminus \C M_j^{-1}(\CB)$ and
$\CB_{j}:=\CB\setminus \C M_j(\CA)$; also, for  a 
subset $Y$ of $\CA_{j}$ or $\CB_{j}$, let $\Gamma_j(Y)$ denote the set of its 
neighbours with respect to the induced bipartite subgraph
 $$
 \CF_j:=\CF[\,\CA_{j},\,\CB_{j}\,]
=\big(\,\CA_{j},\,\CB_{j},\,\{(\V a,\V b)\in \CA_{j}\times \CB_{j}\mid \|\V a-\V 
 b\|_\infty\le \CC\}\,\big),
 $$
 which is obtained from $\CF$ by removing the vertices matched by $\C M_j$.

 Clearly, the matching $\C M_i$ is $\CF$-extendable if and only if $\CF_i$ has a
perfect matching. Since $\CF_i$ is locally finite, Rado's 
theorem~\cite{rado:49} applies. Hence, it is enough to show 
that~\eqref{eq:Rado} 
holds for $\CF_i$.

Since $2\CC$-components of any set are at distance larger than $2\CC$ from 
each 
other, their neighbourhoods in $\CF_i$ are disjoint; thus it is enough to 
prove that
 \begin{equation}\label{eq:aimRado} 
 |\Gamma_i(X)|\ge |X|,\quad\mbox{for every finite 
$2\CC$-connected $X\subseteq \CA_{i}$ or 
$\CB_{i}$}.  
 \end{equation}
 So take an arbitrary non-empty finite $2\CC$-connected 
set $X$ in one part, say $X\subseteq \CA_{i}$.

Given $X$, let its 
\emph{reference point} be the vector $\V o(X)\in \I Z^d$ whose $j$-th 
coordinate for $j\in [d]$ is the minimum of the $j$-th coordinate projection 
$\mathrm{Pr}_j:X\to\I 
Z$. Partition $\I Z^d$ into the $(\CC/2)$-regular grid $\C Q$
with $\V o(X)$ as the origin. Let $X_1\subseteq \I Z^d$ be the 
union of all cubes in $\C Q$ that intersect
$X$ and let $X_2\supseteq X_1$ be obtained from $X_1$ by adding cubes from $\C 
Q$ that share boundary with~$X_1$. This definition is a special case of 
the one 
in
the proof of Lemma~\ref{lm:ShortAugAll} if we let $\CC_1:=\CC/2$ and take a
rectangle $R\supseteq \dist_{\le \MD+1}(X)$ aligned with the $\CC_1$-regular
grid centred at $\V o(X)$. In particular, 
the proof of~\eqref{eq:HallDiff} from 
Lemma~\ref{lm:ShortAugAll} applies verbatim to the
current definitions of $X_1$ and $X_2$ (with $p^R(Y)=p(Y)$ for $Y\subseteq 
X_2$) and gives 
 that, for example,
 \begin{equation}\label{eq:pX1}
 |\Gamma_0(X)|-|X|\ge 
 |\CB\cap X_2|-|\CA\cap X_1|\ge p(X_1).
 \end{equation}
 
 By a \emph{hole (of $X$)} we will mean a $2\CC$-component of $\I Z^d\setminus 
X_1$. 
In other words, two vertices of $\I Z^d\setminus X_1$ are in the same 
hole if and only if one can travel from one to the other in 
$2\CC$-jumps staying all the time 
in $\I Z^d\setminus X_1$. Let 
$\C H(X)$ be the set of all holes of $X$, including the (unique) 
infinite one, which
we denote by $H_\infty$. Note that the boundary of $X_1$ is
the disjoint union of the (reversed) boundaries of the holes.
 \hide{Since the holes partition $\I Z^d\setminus X_1$, we 
have that 
 \begin{equation}\label{eq:disjoint}
 \{(\V n,\V m)\mid (\V m,\V n)\in \partial X_1\}=\sqcup_{H\in\C H(X)} \partial 
H,
 \end{equation} 
 that is, the boundary of $X_1$ after the reversal of all edges is the disjoint 
union of 
the boundaries of all holes.}%
Call a hole $H\in\C H(X)$
\emph{{\complicated}} if $p(H)\ge (r_{i}/\CC)^{(d-1)/d}$.

Rather roughly, the main ideas behind the proof are as follows.  If 
no 
edge of $\CN_i$ is within distance $\CC$ from $X$, then 
$\Gamma_i(X)=\Gamma_{i-1}(X)$ and~\eqref{eq:aimRado}
holds since $\C M_{i-1}$ is extendable by the assumption of the lemma. 
If there is only
one $(\V a,\V b)\in\CN_i$ close to $X$, then~\eqref{eq:aimRado} still holds 
because $\C M_i$ near $X$ is the same as the extendable 
matching $\C 
M_{i-1}\cup\{(\V a,\V 
b)\}$.
Thus a problem can only arise if at least two edges of $\CN_i$ are $\CC$-close 
to $X$. The $(r_i+2\CC)$-sparseness
of $\CN_i$ and the $2\CC$-connectivity of $X$ imply that $|X|\ge r_i/2\CC$. Thus
these assumptions (that follow from the construction of 
Lemma~\ref{lm:induction})
automatically take care of~\eqref{eq:aimRado} when the set $X$ is
``small''. This simple and yet beautiful idea is from Marks 
and Unger~\cite{marks+unger:16}. It worked well in their setting 
when
their initial assumption was that $|\Gamma_0(X)|\ge (1+\e)|X|$ for some absolute 
constant $\e>0$. Our graph $\CF_0$ does not have such strong expansion property
(since the group of translations is amenable) but it satisfies~\eqref{eq:pX1}.
When
we pass to $\CF_i$ by removing the
sparse matchings $\CN_1,\dots,\CN_i$ from $\CF_0$, only those removed edges 
that are near to $\partial X_1$ can decrease the value of $|\Gamma_i(X)|-|X|$ when compared to
$|\Gamma_0(X)|-|X|$. If $H\in\C H(X)$ is a hole, then 
we expect that at most $O(p(H)/r_j+1)$ edges of $\CN_j$ can come near $\partial H$
for each $j\in [i]$. 
Thus, the ``loss'' from the hole  should be at most $\sum_{j=1}^{i}O(p(H)/r_j+1)$. This is smaller than $p(H)$, the 
hole's
contribution to $p(X_1)$, if $p(H)$ is sufficiently large. Thus, {\complicated} holes 
should only help us. So, the remaining 
problematic case is when $|X|$ 
is 
relatively large but contain at least one hole $H$ which is not {\complicated}. Here we take non-{\complicated} 
finite 
holes
$H\in\C H(X)$ one by one. For each such $H$, we ``fill'' 
it up; namely, we decrease the matchings to avoid $H$ and 
enlarge 
$X$ so that $H$ disappears from $\C H(X)$. As we 
will see, this
operation does not increase
$|\Gamma_{i}(X)|-|X|$. By iterating it,  
we can get rid of all non-{\complicated} finite holes. Now, if we can 
prove 
(\ref{eq:aimRado}) for the
final set, then the original set $X$ also satisfies this inequality.

Let us provide the details of the proof.

\case1 At most one edge $(\V a,\V b)$ of $\CN_i=\C M_{i}\setminus\C M_{i-1}$ satisfies $\dist(\{\V a,\V b\},X)\le\nolinebreak\CC$.\medskip

\noindent Suppose that such an edge  $(\V a,\V b)$ exists, for otherwise $\Gamma_i(X)=\Gamma_{i-1}(X)$ has at least $|X|$ vertices 
(since $\C M_{i-1}$ is $\CF$-extendable), as required. Let $\C M_\infty$ be 
some perfect matching
of $\CF_0=\CF$ that extends $\C M_{i-1}\cup\{(\V a,\V b)\}$. The matching
$\C M_\infty$
gives an injection from $X$ to $\Gamma_{i-1}(X)\setminus\{\V b\}\subseteq 
\Gamma_i(X)$.  (Note that $\V a\in \C M_{i}^{-1}(\CB)$ cannot belong to 
$X\subseteq \CA_i$.) Thus~\eqref{eq:aimRado} holds.

\case2 We are not in Case~1, that is, at least two edges of $\CN_{i}$ are 
within distance $\CC$ from~$X$.
\medskip

\noindent Fix some $e\not=e'$ in $\CN_{i}$ as above. By the $2\CC$-connectedness of $X$, we can connect $e$ to $e'$ by 
jumps of distance at most $2\CC$ with all intermediate vertices lying in $X$. 
This means that $|X|\ge \dist(e,e')/2\CC-1\ge r_{i}/2\CC$. 
Consider the boundary $\partial H_\infty$ of the infinite hole $H_\infty\in\C 
H(X)$. 
The set $X':=\I Z^d\setminus H_\infty\supseteq X$ contains 
at least $r_{i}/2\CC$ elements. Also, $X'$ is finite because, for 
example, it lies in the convex hull of the finite set $X_1$. By 
the isoperimetric inequality of Lemma~\ref{lm:Isop},
we have
 $$
 p(H_\infty)= p(X')\ge 2d\cdot |X'|^{(d-1)/d}\ge (r_{i}/\CC)^{(d-1)/d},
 $$
 so $H_\infty$ is a {\complicated} hole.

\case{2.1} Every 
hole $H\in \C H(X)$ 
is {\complicated}.\medskip

\noindent Note that every cube $Q\subseteq X_2$ from the $\V o(X)$-centred $(\CC/2)$-grid
is inside $X_1$ or touches some $(\CC/2)$-cube of $X_1$.
Since each grid cube of $X_1$ contains a vertex from $X$, we have that
$\Gamma_{0}(X)\supseteq \CB\cap X_2$.
When we pass to $\Gamma_{i}(X)$, all vertices of $\CB\cap X_2$ that are
not matched by $\C M_i$ remain. Thus 
 $\Gamma_{i}(X)\supseteq (\CB\cap X_2)\setminus \C M_{i}(\CA)$. 
 Since $X\subseteq \CA_{i}$ does not contain any $\C M_i$-matched vertex, 
we also have 
$X\subseteq  (\CA\cap X_1)\setminus \C M_{i}^{-1}(\CB)$. We 
conclude that
  \begin{equation}\label{eq:Ni}
   |\Gamma_{i}(X)|-|X|\ge  |\CB\cap X_2|-|\CA\cap X_1| -\lambda,
   \end{equation}
 where $\lambda:=|\C M_{i}(\CA)\cap X_2|-|\C M_{i}^{-1}(\CB)\cap X_1|$.
In view of~\eqref{eq:pX1}, it suffices to show that 
$\lambda\le p(X_1)$.

Fix any $j\in[i]$. Let  $\Lambda_j$ consist of those $e$ in $\CN_j=\C 
M_{j}\setminus\C 
M_{j-1}$ that 
contribute
a positive amount (that is, $+1$) to $\lambda$.
For $e\in\Lambda_j$ let its 
\emph{private set} be 
 $$
 \CP_j(e):=\{e'\in\partial X_1\mid \dist(e,e')\le r_j/2+\CC\}.
 $$

 Let us show that each private set is relatively large:

\claim{2}{For every $e\in \Lambda_j$, we have $|\CP_j(e)|\ge (r_j/\CC)^{(d-1)/d}$.}

\bcpf Let $e=(\V a,\V b)$. Since $e$ contributes $+1$ to $\lambda$, we have that
$\V b\in X_2$ and $\V a\not\in X_1$. Let $H\in \C H(X)$ be 
the hole
which contains $\V a$. Suppose that $\partial 
H\setminus 
\CP_j(e)\not=\emptyset$, 
for otherwise we are done since $r_j\le r_{i}$ and every hole is {\complicated}. 

Let us show that, for every integer $m$ with $0\le m\le r_j/4\CC$, 
the \emph{annulus}
 $$
 \CO_m:=\left\{\V n\in\I Z^d\mid (2m-1)\CC< \dist(\V n,e)\le (2m+1)\CC\right\}
 $$
 contains at least one element of $\partial H$ as a subset. The 
\emph{inner part} $\cup_{t=0}^{m}\CO_t$ of $\CO_m$ contains elements from 
both 
$H$ (namely, $\V a$) and $X$ (namely, all elements
of the set $\dist_{\le\CC}(\V b)\cap X$ which is non-empty since
$\V b\in X_2$). The same holds for the \emph{outer part} 
$\cup_{t=m+1}^\infty\CO_t$ of $\CO_m$ because it entirely contains every edge 
of $\partial H\setminus \CP_j(e)\not=\emptyset$. Each of
the sets $H$ and $X$ is $2\CC$-connected; thus we can travel within the set 
from the inner to the outer part of 
$\CO_m$ in $2\CC$-jumps. The distance to
$e$ changes by at most $2\CC$ at each step, so there must be a moment when we 
land in the annulus~$\CO_m$. Thus $\CO_m$ contains elements from both $H$ 
and~$X$. It is easy to see that one can travel within $\CO_m$ 
using only steps of $L^1$-distance $1$ between its any two
vertices, in particular, from $\CO_m\cap H$ to $\CO_m\cap 
X$.  Since $H\cap X=\emptyset$, 
there is a
step from $H$ to its complement, giving the required element of $\partial H$
inside $\CO_m$.

Since the annuli are disjoint for different $m$, this gives at least 
$r_j/4\CC$ different elements of 
$\partial H$, all 
belonging to $\CP_j(e)$. This is at least 
the desired 
bound on $|\CP_j(e)|$ since $r_j/\CC\ge 4^d$ by~\eqref{eq:ri}. 
The claim is proved.\ecpf

Since $\CN_j$ is $(r_j+2\CC)$-sparse by the
assumptions of the lemma,
we have that $\CP_j(e)\cap \CP_j(e')=\emptyset$ for all distinct $e,e'\in 
\Lambda_j$.  
Thus Claim~\ref{cl:2} implies that  $|\Lambda_j|\le 
p(X_1)/(r_j/\CC)^{(d-1)/d}$. Since $j\in[i]$ was arbitrary, we conclude
by~\eqref{eq:ri} that 
 $$
 \lambda\le \sum_{j=1}^{i} |\Lambda_j|\le p(X_1)\sum_{j=1}^{i} 
(\CC/r_j)^{(d-1)/d}\le p(X_1).
 $$
 Thus we derive from~\eqref{eq:pX1} and~\eqref{eq:Ni} that $|\Gamma_i(X)|\ge 
|X|$, as required.

\case{2.2} We are not in Case~2.1, that is,  there is at least one non-{\complicated} hole in $\C 
H(X)$.\medskip

\noindent Take a hole $H\in \C 
H(X)$ which is not {\complicated}. We have $H\not=H_\infty$ because, as we argued at the beginning of Case~2, the infinite hole is necessarily {\complicated}.

 We claim that
at most one edge $(\V a,\V b)\in \CN_{i}$ satisfies $\V a\in \dist_{\le \CC}(H)$.
 Indeed, if $(\V a,\V b),(\V a',\V b')\in\CN_{i}$ contradict this, then we 
can connect 
$\V a$ to $\V a'$ by
using jumps of distance at most $2\CC$ with all intermediate vertices belonging 
to the hole $H$. This gives
at least $r_{i}/2\CC$ vertices in $H$, contradicting  by Lemma~\ref{lm:Isop} 
our assumption that the hole
$H$ is not {\complicated}.

If it exists, let $e:=(\V a,\V b)$ be  the unique edge of $\CN_{i}$ with 
$\V a\in \dist_{\le \CC}(H)$. By our assumptions, there is a perfect matching 
$\C M_\infty$ in $\CF_{0}$ such 
that $\C M_\infty\supseteq \C M_{i-1}$ 
and, if $e$ exists, then $\C M_\infty$ also contains~$e$.
For $j\in[0,i]$, define 
 $$\C M_j':= \{(\V x,\V y)\in\C M_{j}\mid  \dist(\V 
x,H)>\CC\},$$ 
 and let $\Gamma_j'$ denote the neighbourhood taken with respect to the graph 
 $$
  \CF_{j}':= \CF\left[\,\CA\setminus (\C M_{j}')^{-1}(\CB),\,\CB\setminus \C M_{j}'(\CA)\,\right]
  $$
which is obtained from $\CF$ by removing all vertices matched by $\C M_j'$. 
Also, define 
 $$
 X':=X\cup (\CA\cap \dist_{\le\CC}(H)).
 $$ 
 
The following claim will allow us to
pass from $X$ to $X'$ when proving~\eqref{eq:aimRado}.

\claim{1}{All of the following properties hold.
 \begin{enumerate}
 \item\label{it:ref} The sets $X$ and $X'$ have the same reference points, that 
is, $\V 
o(X')=\V o(X)$.
 \item\label{it:X1'} The set $X'_1$ (which consists of all cubes from the 
$(\CC/2)$-grid centred at $\V o(X')$
that intersect $X'$) equals $X_1\cup H$. 
 \item\label{it:holes} When we pass to $X'$, the hole $H$ 
disappears; specifically, $\C H(X')=\C H(X)\setminus \{H\}$.
 \item\label{it:conn} The set $X'$ is $2\CC$-connected.
  \item\label{it:XX'} We have that $|\Gamma_{i}(X)|-|X|\ge |\Gamma_{i}'(X')|-|X'|$.
 \end{enumerate}}
 \bcpf Suppose that the first property does not hold, say $X'$ contains some 
$\V n$ whose $j$-th coordinate is
strictly smaller than $\min(\mathrm{Pr}_j(X))$. But then $\V n\in \I 
Z^d\setminus X$ 
belongs to the infinite hole $H_\infty$ as demonstrated by the infinite 
path with vertices $\V n-m\V 
e_j$ for $m\in\I N$. By the definition of~$X'$, we have 
$\dist(\V n,H)\le 
\CC$ and
thus $\dist(H,H_\infty)\le \CC$, contradicting $H\not=H_\infty$. This proves 
Part~\ref{it:ref}. 

Let us turn to Part~\ref{it:X1'}. Since $X'\supseteq X$ and $\V o(X')=\V o(X)$, 
we have 
that $X'_1\supseteq X_1$. 
Recall that, when we were choosing $\CC,$ one of the required properties was 
that $\Phi(\CC/2)<\delta(\CC/2)^d$. This implies that  
every $(\CC/2)$-cube intersects the $\Phi$-uniform set $\CA$. Thus any 
$(\CC/2)$-cube 
$Q\subseteq H$ contains
at least one element of $\CA\cap H\subseteq X'\setminus X$, which 
implies that $X'_1\supseteq H$. 
On the other hand, every other hole $H'\in \C H(X)\setminus\{H\}$ is at 
distance at least $\CC$ from $X'\setminus X\subseteq \dist_{\le\CC}(H)$, so 
none of the  vertices of $H'$ can be claimed when we build $X'_1$.
This proves Part~\ref{it:X1'}. 

Part~\ref{it:holes} 
follows directly from Part~\ref{it:X1'}. 

We know that every $(\CC/2)$-cube intersects $\CA$, so every element of 
$\CA\cap H$ can be connected to $X$ using $\CC$-jumps with all intermediate 
points in $\CA\cap H$. 
This, the $2\CC$-connectivity of $X$, and Part~\ref{it:X1'} imply Part~\ref{it:conn}.

It remains to verify the last part. Since $X'\supseteq X$ and 
$\CF_{i}'\supseteq \CF_{i}$, we have that 
$\Gamma_{i}'(X')\supseteq \Gamma_{i}(X)$.
Thus we can equivalently rewrite Part~\ref{it:XX'} as 
$|X'\setminus X|\ge |W|$,
where $W:=\Gamma_{i}'(X')\setminus \Gamma_{i}(X)$. Hence, it suffices to
show that the injection $\C M_{\infty}^{-1}$ maps $W$ into $X'\setminus X$.
Take any $\V w\in W$ and let $\V v:=\C M_{\infty}^{-1}(\V w)\in \CA$.
Thus we aim at showing that $\V v\in X'\setminus X$.

First, suppose that $\V w$ is not matched 
by $\C M_{i}$.  Since $\V w\not\in \Gamma_{i}(X)$, this implies that $\V 
w\not\in \Gamma_0(X)$, that is,
$\dist(\V w,X)>\CC$. On the other hand, every element of $X_1$ is at distance 
at most $\CC/2$ from $X$.
We conclude 
that $\V w\not\in X_1$. Furthermore, we have $\V w\in H$: indeed, $\V w\in W$ 
is within 
distance $\CC$ 
from $X'\setminus X\subseteq \dist_{\le\CC}(H)$ so $\dist(\V w,H)\le 2\CC$ and 
$\V w\in\I Z^d\setminus X_1$ must belong to~$H$. The $\C M_{\infty}$-match $\V 
v$ of $\V w$ is at 
distance at most 
$\CC$ from $\V w\in H$. Thus $\dist(\V v,H)\le \CC$ and, by the definition of 
$X'$, we have $\V v\in X'$. Also,
$\V v\not\in X$ since $\dist(\V w,X)>\CC$. Thus $\V v\in X'\setminus 
X$, as desired. 

So suppose that $\V w$ is matched by $\C M_{i}$. Let $\V u:=\C 
M_i^{-1}(\V 
w)$. Since $\V w\in \Gamma_i'(X')\subseteq \I Z^d\setminus 
\C M_i'(\CA)$, the edge $(\V u,\V w)\in \C 
M_i$ was not included into $\C M_i'$. This means by the definition of
$\C M_i'$
that $\dist(\V u,H)\le \CC$ and thus $\V u\in X'$.
Let us derive a 
contradiction by assuming that $\V v=\C 
M_\infty^{-1}(\V 
w)$ is different from~$\V u$. So suppose that $\V u\not=\V v$. Since $\C 
M_\infty\supseteq \C M_{i-1}$, we
have that $(\V u,\V w)\in \CN_i$.  Since $\dist(\V u,H)\le \CC$, we have that 
$(\V u,\V w)$ is the unique special
edge $(\V a,\V b)$ of $\CN_i$. However, then
$\C M_\infty\ni (\V a,\V b)$ maps both $\V u\not=\V v$ to the same vertex $\V 
w$, a 
contradiction. Thus $\V v=\V u$ is in $X'$; also $\V v=\C M_i^{-1}(\V 
w)$ 
cannot
belong to $X\subseteq \CA_{i}\subseteq \I Z^d\setminus \C M_{i}^{-1}(\CB)$. 
Thus 
$\V v\in 
X'\setminus X$, as desired.

We conclude that $\C M_{\infty}^{-1}$ gives an injection from $W$ to 
$X'\setminus X$. This proves Part~\ref{it:XX'} of the claim.\ecpf

By Part~\ref{it:XX'} of Claim~\ref{cl:1}, it is enough to
prove~\eqref{eq:aimRado} for $X'$ with respect to the smaller matchings 
$\C M_1'\subseteq\dots\subseteq \C M_i'$. Note that the 
assumptions of the lemma cannot be violated by shrinking the matchings,
except if $\C M_i'=\emptyset$ in which case $\C M_i'\subseteq\C M_{i-1}$
is trivially extendable.
Also, Parts~\ref{it:holes} and~\ref{it:conn} of Claim~\ref{cl:1} show that, 
when we pass from $X$ to $X'$, we preserve the $2\CC$-connectivity and 
the set of 
holes does not change except the non-{\complicated} hole $H$ disappears. Thus if we iterate 
the above operation (that is, keep ``filling up''
finite non-{\complicated} holes one by one until none remains), then we stop
in finitely many steps and the final set will satisfy all assumptions of either
Case~1 or Case~2.1. Since we gave a direct proof for these cases, this
finishes the proof of Lemma~\ref{lm:Baire} (and thus of Part~\ref{it:Baire}
of Theorem~\ref{th:Suff}).\end{proof}

\hide{
\medskip\noindent\textbf{Remark.} One can weaken the assumption~\eqref{eq:ri} in Lemma~\ref{lm:Baire} to $\sum_{j=1}^i (\CC/r_j)<1/4$
if instead of two applications of Lemma~\ref{lm:Isop} (the isoperimetric inequality),  a modification
of the argument of Claim~\ref{cl:2} is used where annuli are replaced by \emph{$2\CC$-thick slices} $\Pr_1^{-1}([2\CC m,2\CC(m+1)-1])$, $m\in\I N$. For example, if at least two edges $e,e'\in \CN_i$ are within distance $\CC$ to $X$,
then each slice between $e$ and $e'$ has elements from both $X$ and $H_\infty$, giving that $p(H_\infty)\ge r_j/4\CC$.\medskip
}%

\section{Concluding remarks}\label{concluding}

Laczkovich~\cite[Page~114]{laczkovich:02} states that \emph{``a rough estimate''} of the number of pieces for squaring circle given by the proof in~\cite{laczkovich:92b,laczkovich:92} is $10^{40}$. Since our proof of Theorem~\ref{th:Main}
requires stronger analogues of some inequalities from \cite{laczkovich:92b,laczkovich:92},
such as the extra term $\Omega(|X|^{(d-1)/d})$ in Part~1 of Lemma~\ref{lm:ShortAugAll} under 
the further restriction of
the neighbourhood to $R$ (whereas the unrestricted bound $|E(X)|\ge|X|$
suffices for Theorem~1), it produces at least as many pieces
as the proofs by Laczkovich. As mentioned in~\cite[Page~114]{laczkovich:02}, one needs 
at least 3 pieces for circle squaring with arbitrary isometries and at least 4 pieces if one has to use translations only.
This seems still to be the current state of knowledge, so the gap here is really huge.

It is interesting to compare the proofs and results in the current paper 
and \cite{grabowski+mathe+pikhurko:expansion} as both give, for example,
a Lebesgue measurable version of Hilbert's third problem. 
In terms of methods, both papers share the same general approach of 
reducing the problem to finding a measurable matching in a certain infinite bipartite graph $\C G=(A,B,E)$, once 
we have agreed on the exact set of isometries to be used. Like here, the paper \cite{grabowski+mathe+pikhurko:expansion} constructs a sequence of measurable matchings $(\C M_i)_{i\in \I N}$ satisfying
(\ref{eq:aim1}) and (\ref{eq:aim2}), and then defines $\C M$ by~(\ref{eq:CM}). 
However, the matching $\C M_i$ in~\cite{grabowski+mathe+pikhurko:expansion} is obtained from $\C M_{i-1}$ by augmenting
paths of length at most $2i+1$ in an \emph{arbitrary} measurable way until none remains.
This works by the observation of
Lyons and Nazarov~\cite[Remark~2.6]{lyons+nazarov:11} that (\ref{eq:aim1}) and (\ref{eq:aim2}) are
satisfied automatically provided $\C G$ has the \emph{expansion property} (i.e.\
there is $\eps>0$ such that the measure of the neighbourhood of $X$ is at least $(1+\eps)\lambda(X)$ for every 
set $X$ occupying at most half of one part in measure). 
As shown in~\cite{grabowski+mathe+pikhurko:expansion}, the expansion property applies to a wide
range of pairs $A,B$.
For example, one of the results in~\cite{grabowski+mathe+pikhurko:expansion} is that two bounded 
Lebesgue measurable sets $A,B\subseteq \I R^k$ for
$k\ge 3$ are measurably equidecomposable if $\lambda(A)=\lambda(B)$  and  an open ball can be covered by finitely many copies of each set (without any further assumptions on the boundary or interior of these sets). On the other hand, $\C G$ cannot have the expansion property
if the isometries are taken from an amenable group, for example, such as the group of translations of $\I R^k$.
Thus we could not prove the Lebesgue part of Theorem~\ref{th:Main} by doing augmentations as 
in~\cite{grabowski+mathe+pikhurko:expansion}; instead, we had to carefully guide 
each $\C M_i$ to look locally as a binary grid
of maximum matchings.  Also, various examples by 
Laczkovich~\cite{laczkovich:92b,laczkovich:93,laczkovich:03} show that the assumptions of
Theorems~\ref{th:L}--\ref{th:Suff} are rather tight.
Hopefully, the ideas that were introduced here will be useful in establishing further results on measurable equidecompositions, in particular 
under actions of amenable groups.

As far as we see, the only place in this paper where we use any set theoretic assumption stronger than 
the Axiom of Dependent Choice is the application of 
Rado's theorem~\cite{rado:49} inside Theorem~\ref{th:LSuff}
to derive the existence of a perfect 
matching.
Thus, if we are allowed to use only the Axiom of Dependent Choice,
our proof of Theorem~\ref{th:Main} should produce a Borel 
measurable equidecomposition $A\setminus A'\simTr B\setminus B'$ for some Borel 
meagre nullsets 
$A'\subseteq A$ and $B'\subseteq B$.

Probably, the most interesting problem which remains open is the 
question of Wagon~\cite[Page 229]{wagon:btp} whether circle squaring is 
possible with Borel pieces. Unfortunately, we do not see a way how to
completely eliminate the error set $A'\cup 
B'$ (which arises in our arguments after~(\ref{eq:aim1})--(\ref{eq:aim2}) in \secref{proof} 
and after~\eqref{eq:CoversUi} in \secref{Baire}), apart from applying Theorem~\ref{th:LSuff} and thus using the Axiom of Choice.

\section*{Acknowledgements}

The authors are grateful for the helpful comments from Spencer Unger that 
simplified  the 
proof 
of Lemma~\ref{lm:Baire} and from the anonymous referees that,
in particular, improved the bound of Lemma~\ref{lm:boundaries}.

\renewcommand{\baselinestretch}{1.1}
\small

\providecommand{\bysame}{\leavevmode\hbox to3em{\hrulefill}\thinspace}
\providecommand{\MR}{\relax\ifhmode\unskip\space\fi MR }
\providecommand{\MRhref}[2]{%
  \href{http://www.ams.org/mathscinet-getitem?mr=#1}{#2}
}
\providecommand{\href}[2]{#2}


\end{document}